\titlespacing*{\section}{0pt}{1.5ex plus 0.5ex minus 0.3ex}{0.8ex plus 0.2ex}
\titlespacing*{\subsection}{0pt}{1.2ex plus 0.4ex minus 0.2ex}{0.6ex plus 0.2ex}
\titlespacing*{\subsubsection}{0pt}{1.0ex plus 0.3ex minus 0.2ex}{0.5ex plus 0.1ex}
\def\thm@space@setup{%
  \thm@preskip=5pt plus 2pt minus 2pt
  \thm@postskip=5pt plus 2pt minus 2pt
}
\setlist{topsep=2pt,itemsep=1pt,parsep=1pt}
\newcommand{\mv}{\mathfrak{v}}
\newcommand{\vep}{\varepsilon}
\newcommand{\mr}{\mathcal{R}}
\newcommand{\tm}{t_{\theta,B,\mu}}
\newcommand{\E}{\mathbb{E}}
\newcommand{\tml}{\widetilde{\mathfrak{L}}_n(\mathbf{X})}
\newcommand{\tmln}{\mathfrak{L}_n(\mathbf{m}_R)}
\newtheorem{thm}{Theorem}
\numberwithin{thm}{section}
\newaliascnt{lmm}{thm}
\newtheorem{lmm}[lmm]{Lemma}
\crefname{lmm}{Lemma}{Lemmas}
\Crefname{lmm}{Lemma}{Lemmas}
\newaliascnt{cor}{thm}
\newtheorem{cor}[cor]{Corollary}
\crefname{cor}{Corollary}{Corollaries}
\Crefname{cor}{Corollary}{Corollaries}
\newaliascnt{prop}{thm}
\newtheorem{prop}[prop]{Proposition}
\crefname{prop}{Proposition}{Propositions}
\Crefname{prop}{Proposition}{Propositions}
\newaliascnt{problem}{thm}
\newtheorem{defn}{Definition}
\theoremstyle{definition}
\newtheorem{remark}{Remark}
\newtheorem{ex}{Example}
\numberwithin{remark}{section}
\numberwithin{ex}{section}
\numberwithin{assm}{section}
\numberwithin{defn}{section}
\newcommand{\argmin}{\operatorname{argmin}}
\newcommand{\R}{\mathbb{R}}
\newcommand{\isZ}{Z^{\mathrm{quad}}}
\newcommand{\isR}{\mathbb{R}^{\mathrm{quad}}_{n,\theta,B}}
\newcommand{\isRR}{\mathbb{R}^{\mathrm{quad}}_{n,\theta}}
\newcommand{\tfm}{\mathfrak{L}_n(\mathbf{m})}
\newcommand{\mf}{\mathcal{F}}
\newcommand{\ml}{\mathcal{L}}
\newcommand{\wmm}{\widetilde{\mathcal{M}}}
\newcommand{\tmm}{t_{\tilde{\theta},\tilde{B},\mu_0}}
\newcommand{\fmm}{\mathfrak{m}}
\renewcommand{\P}{\mathbb{P}}
\newcommand{\tph}{\tilde{\mathfrak{u}}}
\newcommand{\tth}{\tilde{\theta}}
\newcommand{\tB}{\tilde{B}}
\newcommand{\tq}{\tilde{q}}
\newcommand{\tp}{p'}
\newcommand{\tQh}{\mathrm{Sym}[Q_n]}
\newcommand{\cN}{\mathcal{N}}
\numberwithin{equation}{section}
\newcommand{\mJ}{\mathcal{J}}
\newcommand{\tlns}{\widetilde{\mathfrak{L}}_n(\mathbf{m}_R)}
\newcommand{\tlnl}{\widetilde{\mathfrak{L}}_{n}(\mathbf{m}_V^{(L)})}
\newcommand{\tln}{\widetilde{\mathfrak{L}}_{n}(\mathbf{X}^{(L)})}
\title{Gibbs measures with multilinear forms}
\author[1]{Sohom Bhattacharya}
\author[2]{Nabarun Deb}
\author[3]{Sumit Mukherjee}
\affil[1]{Department of Statistics, University of Florida, \texttt{bhattacharya.s@ufl.edu}}
\affil[2]{Econometrics and Statistics, University of Chicago Booth School of Business, \texttt{nabarun.deb@chicagobooth.edu}}
\affil[3]{Department of Statistics, Columbia University, \texttt{sm3949@columbia.edu}}
\date{}
\begin{document}

\maketitle

\begin{abstract}
In this paper, we study a class of multilinear Gibbs measures with Hamiltonian given by a generalized $\mathrm{U}$-statistic and with a general base measure.  Expressing the asymptotic free energy as an optimization problem over a space of functions, we obtain sufficient conditions for replica-symmetry, and provide examples to show why these conditions are also necessary.
Utilizing this, we obtain weak limits for a large class of statistics of interest, which includes the \enquote{local fields/magnetization}, the Hamiltonian, the global magnetization, etc. An interesting consequence is a universal weak law for contrasts under replica symmetry, namely, $n^{-1}\sum_{i=1}^n c_i X_i\to 0$ weakly, if $\sum_{i=1}^n c_i=o(n)$.
Our results yield a probabilistic interpretation for the optimizers arising out of the limiting free energy. We also prove the existence of a sharp phase transition point in terms of the temperature parameter, thereby generalizing existing results that were only known for quadratic Hamiltonians. As a by-product of our proof technique, we obtain exponential concentration bounds on local and global magnetizations, which are of independent interest.
\end{abstract}

\noindent\textit{MSC2020 subject classifications.} Primary 82B20; secondary 05C80.\\
\textit{Keywords and phrases.} Graph limits, magnetization, phase transition, replica-symmetry, tensor Ising model.

\section{Introduction}
\noindent Suppose $\mu$ is a (non-degenerate) probability measure on $\mathcal{\R}$. Let $H=(V(H),E(H))$ be a finite graph with $v:=|V(H)|\ge 2$ vertices labeled $[v]=\{1,2,\ldots,v\}$, and maximum degree $\Delta$. For $\theta\in \R$, positive integer $n $, and a symmetric $n\times n$ matrix $Q_n$ with $0$ on the diagonal, define the \textit{log-partition function/free energy} 
\begin{align}\label{eq:zn}
Z_n(\theta):=\frac{1}{n}\log \E_{\mu^{\otimes n}} e^{n\theta \mathbb{U}_n({\bf X})}\in (-\infty,\infty],
\end{align}
where the Hamiltonian $\mathbb{U}_n({\bf X})$ is a multilinear form, defined by
\begin{align}\label{eq:U}
	\mathbb{U}_n({\bf X}):=\frac{1}{n^v}\sum_{(i_1,\ldots,i_v)\in \mathcal{S}(n,v) }\Big(\prod_{a=1}^v X_{i_a}\Big)\prod_{(a,b)\in E(H)}Q_n(i_a,i_b).
\end{align}
Here $\mathcal{S}(n,v)$ is the set of all distinct tuples from $[n]^v$ (so that $|\mathcal{S}(n,v)|=v!\binom{n}{v}$). If $\theta$ is such that $Z_n(\theta)$ is finite, we can define a Gibbs probability measure $\mathfrak{R}_{n,\theta}$ on $\R^n$ by setting
	\begin{align}\label{eq:gibbs}
		\frac{d\mathfrak{R}_{n,\theta}}{d\mu^{\otimes n}}({\bf x})=\exp\Big(n\theta \mathbb{U}_n({\bf x})-nZ_n(\theta)\Big).
	\end{align}
Several Gibbs measures of interest can be expressed in the form \eqref{eq:gibbs} with various choices of $(Q_n,H,\mu)$. Below we give two examples of such Gibbs measures which have been well studied in Probability and Statistics.

\begin{itemize}
	\item
	If $H=K_2$ is an edge, then 
	$$\frac{d\mathfrak{R}_{n,\theta}}{d\mu^{\otimes n}}({\bf x})=\exp\left(\frac{1}{n}\sum_{i\ne j} Q_n(i,j) x_i x_j-n Z_n(\theta)\right)$$ is a Gibbs measure with a quadratic Hamiltonian. In particular if $\mu$ is supported on $\{-1,1\}$, then $\mathfrak{R}_{n,\theta}$ is the celebrated Ising model on $\{-1,1\}^n$ with coupling matrix $Q_n$ (see \cite{ising1925beitrag,Basak2017,Bresler2019,bhattacharya2025nonsense,Rados2019,lee2025fluctuations} for various examples). Popular examples of $Q_n$ include the adjacency matrix of the complete graph \cite{Ellis1978}, line graph \cite{ising1925beitrag}, random graphs such as Erd\H{o}s-R\'{e}nyi \cite{kabluchko2019fluctuations} or random $d$-regular \cite{Xu2023}, and so on.
	\\
 
	\item
	If $H=K_v$ is the complete graph on $v$ vertices, and $Q_n$ is the adjacency matrix of a complete graph, then model \eqref{eq:gibbs} reduces to
	$$\frac{d\mathfrak{R}_{n,\theta}}{d\mu^{\otimes n}}({\bf x})=\exp\left(\frac{1}{n^{v-1}}\sum_{(i_1,\ldots ,i_v)\in \mathcal{S}_{n,v}} \prod_{a=1}^v x_{i_a}-nZ_n(\theta)\right).$$ For the special case where $\mu$ is supported on $\{-1,1\}$, $\mathfrak{R}_{n,\theta}$ is just the $v$-spin version of the Curie-Weiss model, which has attracted attention in recent years (see~\cite{mukherjee2020estimation,mukherjee2021fluctuations,yamashiro2019dynamics,daskalakis2020logistic}). 
\end{itemize}
\textbf{Our Contributions:}
In this paper, we study the generalized model~\eqref{eq:gibbs}, when the sequence of matrices $\{Q_n\}_{n\ge 1}$ converge in cut norm (see \eqref{eq:cut_con}). Our main contributions can be summarized as follow: 

(a) We show the limit of the log-partition function can be written using an infinite-dimensional optimization problem (\cref{prop:freen}).
In \cref{prop:propopt}, we obtain sufficient conditions for replica-symmetry, and provide examples to show why these conditions are also necessary.

(b) We obtain weak limits for a large family of statistics which include the Hamiltonian, local and global magnetizations, and contrasts (see \cref{lem:wealimi} and \cref{prop:higherord}).

(c) We provide exponential tail bounds for local and global magnetizations (see \cref{lem:basicprop}) which is of possible independent interest.

(d) As our final example, we show the existence of a ``phase transition" for Gibbs measures with higher order interactions of the form \eqref{eq:gibbs} with compactly supported $\mu$ (see \cref{prop:genopt}). 

\subsection{Main results}\label{sec:mainres}

To state our results, we will need some notations which we introduce below, and use throughout the rest of the paper. 
\\

\noindent Our main assumption on the sequence of matrices $\{Q_n\}_{n\ge 1}$ is that it converges in the cut norm (defined below). Cut norm has been introduced in the combinatorics literature to study limits of graphs and matrices (see~\cite{FriezeKannan1999}), and have received significant attention in the recent literature (\cite{bc_lpi,borgs2018p,borgsdense1,borgsdense2}). For more details on cut norm and its manifold applications, we refer the interested reader to \cite{Lovasz2012}. Below we formally introduce the notion of cut norm used in this paper.

\begin{defn}[Cut norm]\label{def:defirst}
	
   For any $\kappa\ge 1$, let $L^{\kappa}([0,1]^2)$ denote the space of all measurable functions $W$ on the unit square, such that $$\|W\|_\kappa:=\Big(\int_{[0,1]^2}\|W(x,y)\|^\kappa\Big)^{1/\kappa}<\infty.$$
Let $\mathcal{W}$ be the space of all symmetric real-valued functions in $L^1([0,1]^2)$. Given two functions $W_1,W_2\in \mathcal{W}$, define the cut norm between $W_1, W_2$ by setting
	$$d_\square(W_1,W_2):=\sup_{S,T}\Big|\int_{S\times T} \Big[W_1(x,y)-W_2(x,y)\Big]dx dy\Big|.$$
	In the above display, the supremum is taken over all measurable subsets $S,T$ of $[0,1]$.  
	\\
    
	\noindent Given a symmetric matrix $Q_n$, define a function $W_{Q_n}\in \mathcal{W}$ by setting
	\begin{align*}
		W_{Q_n}(x,y)=& Q_n(i,j)\text{ if }\lceil nx\rceil =i, \lceil ny\rceil =j.
	\end{align*}

	We will assume throughout the paper that 
	the sequence of matrices $\{Q_n\}_{n\ge 1}$ introduced in \eqref{eq:U} converge in cut norm,~i.e. for some $W\in \mathcal{W}$,
	\begin{align}\label{eq:cut_con}
		d_{\square}(W_{Q_n},W) \rightarrow 0.
	\end{align} 
\end{defn}

\begin{defn}\label{def:M}
Let $\mathcal{M}$ denote the set of probability measures on $[0,1]\times \R$, equipped with weak topology. Given a probability measure $\nu\in\mathcal{M}$, let $\nu_{(1)}$ and  $\nu_{(2)}$ denote its first and second marginals respectively. Define $\wmm\subseteq \mathcal{M}$ to be the set of probability measures on $[0,1]\times \R$ with first marginal uniform, i.e. $\wmm:=\{\nu\in\mathcal{M}:\ \nu_{(1)}=\mathrm{Unif}[0,1]\}.$ 
For any $\nu \in \mathcal{M}$, set $\fmm_p(\nu):=\int |x|^p \,d\nu_{(2)}(x)$ for $p\ge 0$.
 Define $\wmm_p\subseteq \wmm$ to be the set of all probability measures on $[0,1]\times \R$ with first marginal uniform, and second marginal having a finite $p^{th}$ moment, i.e.
    $\wmm_p:=\{\nu\in\wmm:\ \fmm_p(\nu)<\infty\}$.

\end{defn}


We now introduce the exponential tilt of the base measure $\mu$, and some related notations. This requires the following assumption, which we make throughout the paper:
 For all $\lambda>0$ and some $p\in [v,\infty]$, we have
	\begin{align}\label{eq:tailp}
		\E_{\mu} e^{\lambda |X_1|^p}<\infty,
	\end{align}
	where the case $p=\infty$ corresponds to assuming $\mu$ is compactly supported.

\begin{defn}[Exponential tilting]\label{def:tilt}
Given
\eqref{eq:tailp}, 
the function $$\alpha(\theta):=\log \int_{\R} e^{\theta x}d\mu(x)$$ is finite for all $\theta\in \R$. Define the $\theta$-exponential tilt of $\mu$ by setting $$\frac{ d\mu_{\theta}}{d \mu}(x):= \exp( \theta x - \alpha(\theta)).$$  Then the function $\alpha(.)$ is infinitely differentiable, with $$\alpha'(\theta)=\mathbb{E}_{\mu_\theta}(X),\quad \alpha''(\theta)=\mathrm{Var}_{\mu_\theta}(X)>0.$$  Consequently the function $\alpha'(.)$ is strictly increasing on $\R$, and has an inverse $\beta(.):\mathcal{N}\mapsto \R $, where $\mathcal{N}:=\alpha'(\R)$ is an open interval. Let $\mathrm{cl}$ denote the closure of a set in $\R$, and extend $\beta(.)$ to a (possibly infinite valued) function on $\mathrm{cl}(\mathcal{N})$ by setting
\begin{align*}
	\beta(\sup\{\mathcal{N}\})=&+\infty\text{ if }\sup\{\mathcal{N}\}<\infty,\\
	\beta(\inf\{\mathcal{N}\})=&-\infty\text{ if }\inf\{\mathcal{N}\}>-\infty.
\end{align*}
We write $D(\cdot|\cdot)$ to denote the standard Kullback-Leibler divergence. Define a function $\gamma:\beta(\mathrm{cl}(\mathcal{N}))\mapsto [0,\infty]$ by setting
\begin{eqnarray*}
	\gamma(\theta):=&D(\mu_{\theta}\| \mu)= \theta \alpha'(\theta)-\alpha(\theta)&\text{ if }\theta\in \R=\beta(\mathcal{N}),\\
	\gamma(\infty) :=&D(\delta_{\sup\{\mathcal{N}\}}|\mu)&\text{ if }\sup\{\mathcal{N}\}<\infty,\\
	\gamma(-\infty) :=&D(\delta_{\inf\{\mathcal{N}\}}|\mu)&\text{ if }\inf\{\mathcal{N}\}>-\infty.
\end{eqnarray*}
\end{defn}

\begin{defn}\label{def:tilt2}
 Let $\mathcal{L}_p$ denote the space of all measurable functions $f:[0,1]\mapsto {\rm cl}(\mathcal{N})$ such that $\int_0^1|f(u)|^pdu<\infty$. 
Define a map $\Xi:\mathcal{L}_p\mapsto \widetilde{\mathcal{M}}$ as follows:
\\

For any $f\in \mathcal{L}_p$, if $(U,V)\sim \Xi(f)$, then $U\sim \mathrm{U}[0,1]$, and given $U=u$, one has
\begin{eqnarray*}
	V\sim &\mu_{\beta(f(u))}\text{ if }f(u)\in \mathcal{N},&\\
	=&\sup\{\alpha'(\R)\}\text{ if }f(u)=\sup\{\mathcal{N}\},&\text{ (this can only happen if }\sup\{\mathcal{N}\}<\infty),\\
	=&\inf\{\alpha'(\R)\}\text{ if }f(u)=\inf\{\mathcal{N}\},& \text{ (this can only happen if }\inf\{\mathcal{N}\}>-\infty).
\end{eqnarray*}
\end{defn}

Note that \cref{def:tilt} and \cref{def:tilt2} require three sub-cases for the definition of $\gamma$ and $V$. This is because we allow the support of the  base measure $\mu$ to be finite/infinite at either end. If we only worked with measures $\mu$ with support the whole of $\R$, we only need to consider the first sub-case.


\begin{defn}\label{def:g1}
Fix $W\in \mathcal{W}$ and let $\mathcal{L}_p$ be as defined above.
Define the functional $G_{W}(.):\mathcal{L}_p\mapsto \R$ by setting
$$G_{W}(f):=\int_{[0,1]^v}\left(\prod_{(a,b)\in E(H)} W(x_a,x_b)\right) \left(\prod_{a=1}^v f(x_a)dx_a\right),$$
whenever $G_{|W|}(|f|)<\infty$ (see~\cref{prop:freen} below for sufficient conditions).  
\end{defn}
One can think of $f$ and $G_W(f)$ as continuum analogues of a vector ${\mathbf x}\in \R^n$, and the Hamiltonian $\mathbb{U}_n({\bf x})$ defined in \eqref{eq:U},  evaluated at $f$.

Finally, let $\mathfrak{L}_n(\cdot)$ be the empirical measure map from $\R^n$ to $\mathcal{M}$  defined by
\begin{equation}\label{eq:ln}
\mathfrak{L}_n(\mathbf{x}) := \frac{1}{n}\sum_{i=1}^{n}\delta_{(\frac{i}{n},x_i)}, \qquad \mathbf{x}=(x_1,\ldots ,x_n).
\end{equation}
 Note that including the vector $(\frac{1}{n}, \frac{2}{n},\cdots,1)$ in the first co-ordinate allows us to write the multilinear form $U_n(X)$ introduced in \eqref{eq:U} as a \emph{nice} function of $\mathfrak{L}_n$ (see \cref{rem:twph} below).

The following proposition characterizes the asymptotics of the log partition function/free energy in terms of an infinite dimensional optimization problem. 

\begin{prop}\label{prop:freen}
Suppose that $\mu$ satisfies~\eqref{eq:tailp} for some $p\ge v$ and all $\lambda>0$. Let $\{Q_n\}_{n\ge 1}$ be a sequence of matrices such that \eqref{eq:cut_con} holds for some $W\in\mathcal{W}$, and 
\begin{equation}\label{eq:q}
    \limsup_{n\to\infty}\ \lVert W_{Q_n}\rVert_{q\Delta}<\infty,
\end{equation}
for some $q>1$ such that $\frac{1}{p}+\frac{1}{q}\leq 1$. Here, $\Delta$ denotes the maximum degree of $H$. Then the following conclusions hold.

(i) The function $G_{W}(.)$ is well-defined on $\mathcal{L}_p$, i.e., $G_{|W|}(|f|)<\infty$ for all $f\in\mathcal{L}_p$, where $\mathcal{L}_p$ is defined as in~\cref{def:tilt2}. 


(ii) With $Z_n(\theta)$ as in \eqref{eq:zn}, we have  $\sup_{n\ge 1} Z_n (\theta) < \infty$ and 
\begin{align}\label{eq:gibbsop}
\lim_{n\to\infty}Z_n(\theta)
=\sup_{f\in\mathcal{L}_p:\ \int_{[0,1]} \gamma(\beta(f(x)))dx<\infty}\left\{\theta G_{W}(f)-\int_{[0,1]}\gamma(\beta(f(x)))dx\right\}.
\end{align}

(iii) Let $d_{\ell}(\cdot,\cdot)$ denote the bounded Lipschitz metric between two probability measures. The supremum in \eqref{eq:gibbsop} is achieved on a set $F_\theta\subset \mathcal{L}_p$ (say), which satisfies \begin{equation}\label{eq:multweaklim}
	d_{\ell}(\mathfrak{L}_n({\bf X}),\Xi(F_\theta))\overset{P}{\longrightarrow}0
\end{equation}
 under ${\bf X}\sim \mathfrak{R}_{n,\theta}$ (as in \eqref{eq:gibbs}), where $\Xi$ is defined by \cref{def:tilt2}, and for any set of measures $\mathcal{A}$, $d_{\ell}(\nu_1,A)= \inf_{\nu_2 \in \mathcal{A}} d_{\ell}(\nu_1,\nu_2)$. Moreover $\Xi(F_{\theta})$ is compact in the weak topology.

\end{prop}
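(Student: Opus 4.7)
The plan is to establish the three parts in sequence, using Finner's inequality for (i), a Sanov/Varadhan-type large deviations analysis for (ii), and an upper-semi-continuity plus concentration argument for (iii).

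For (i), I would apply Finner's inequality to $G_{|W|}(|f|)$, assigning weight $1/(q\Delta)$ to each edge of $H$ and weight $1/p$ to each vertex of $H$. For any vertex $a$ of degree $d_a\le\Delta$, the weights sum to $d_a/(q\Delta)+1/p\le 1/q+1/p\le 1$, so Finner's inequality yields
$$G_{|W|}(|f|)\le \|W\|_{q\Delta}^{|E(H)|}\,\|f\|_p^v.$$
Under \eqref{eq:q} and \eqref{eq:tailp} this is uniform in $n$ when $W$ is replaced by $W_{Q_n}$, giving (i) and the uniform integrability invoked below; it also yields $\sup_n Z_n(\theta)<\infty$ once (ii) is established.

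For (ii), the key step is a counting lemma: if $\nu_n\to\nu$ in $(\wmm_p,d_\ell)$ with $\sup_n\fmm_p(\nu_n)<\infty$, then $G_{W_{Q_n}}(m_{\nu_n})\to G_W(m_\nu)$, where $m_\nu(u):=\int y\,d\nu_u(y)$. Combined with a variance estimate showing $\mathbb{U}_n(\mathbf{X})-G_{W_{Q_n}}(m_{\mathfrak{L}_n(\mathbf{X})})\to 0$ in $\mu^{\otimes n}$-probability (using Efron–Stein or direct computation, again controlled by the Finner bound), this identifies $\theta\mathbb{U}_n(\mathbf{X})$ with the continuous functional $\theta G_W(m_{\mathfrak{L}_n(\mathbf{X})})$ up to $o(1)$. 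The sequence $\mathfrak{L}_n(\mathbf{X})$ satisfies Sanov's LDP on $(\wmm,d_\ell)$ under $\mu^{\otimes n}$ with good rate function $J(\nu):=\int_{[0,1]}D(\nu_u|\mu)\,du$. Varadhan's lemma (after truncation of $\mathbb{U}_n$ to handle the non-compact case, justified by \eqref{eq:tailp} and Finner) then gives
$$\lim_n Z_n(\theta)=\sup_{\nu\in\wmm}\bigl\{\theta G_W(m_\nu)-J(\nu)\bigr\}.$$
For each $f\in\mathcal{L}$, the inner minimization of $J(\nu)$ over $\nu$ with $m_\nu=f$ is a pointwise KL minimization, solved by $\nu_u=\mu_{\beta(f(u))}$ with value $\gamma(\beta(f(u)))$. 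This reduces the supremum to the $f$-parametrized form in \eqref{eq:gibbsop}, and collecting $t=G_W(f)$ yields the $\sup_t\{\theta t-I(t)\}$ identity.

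For (iii), the functional $\Psi(\nu):=\theta G_W(m_\nu)-J(\nu)$ is upper semi-continuous on $\wmm_p$: continuity of $\nu\mapsto G_W(m_\nu)$ comes from the counting lemma, and lower semi-continuity plus coercivity of $J$ is standard, combined with the exponential moment \eqref{eq:tailp}. Hence $\Psi$ attains its supremum on a non-empty compact set which is precisely $\Xi(F_\theta)$. For the convergence $d_\ell(\mathfrak{L}_n(\mathbf{X}),\Xi(F_\theta))\xrightarrow{P}0$, fix $\varepsilon>0$ and set $A_\varepsilon:=\{d_\ell(\mathfrak{L}_n(\mathbf{X}),\Xi(F_\theta))>\varepsilon\}$. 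Upper semi-continuity and compactness of $\Xi(F_\theta)$ produce $\delta=\delta(\varepsilon)>0$ with $\Psi(\nu)\le Z(\theta)-\delta$ on $\{d_\ell(\nu,\Xi(F_\theta))>\varepsilon\}$. Writing the Gibbs density and using the upper bound from (ii) on the partition function restricted to $A_\varepsilon$,
$$\R_{n,\theta}(A_\varepsilon)=\frac{\mathbb{E}_{\mu^{\otimes n}}[e^{n\theta\mathbb{U}_n}\mathbf{1}_{A_\varepsilon}]}{\mathbb{E}_{\mu^{\otimes n}}[e^{n\theta\mathbb{U}_n}]}\le e^{-n\delta/2}$$
for all $n$ large, proving the convergence.

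The main obstacle is reconciling weak cut metric convergence of $Q_n$ with the label-pinned topology of $\wmm$ carried by $\mathfrak{L}_n(\mathbf{X})$: $\delta_\square$ is rearrangement-invariant while $\mathfrak{L}_n$ is not. The counting lemma must therefore exploit the rearrangement-invariance of $G_W$ itself, effectively choosing an optimal representative of $W$ before applying Finner's bound. The non-compact case $p<\infty$ compounds this: the truncations required at every step must be controlled uniformly in $n$, which is possible only via the combination of \eqref{eq:tailp} and the $q\Delta$-norm control \eqref{eq:q}.
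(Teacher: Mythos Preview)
Your outline is correct and is essentially a reconstruction of the underlying argument, but the paper does not prove this proposition from scratch: it simply invokes \cite[Theorems 1.1 and 1.6, Corollary 1.3, Remark 2.1]{bhattacharya2022ldp}. Part (i) is attributed to \cite[Lemma 2.2]{bhattacharya2022ldp} (which is the Finner-type bound you wrote down, packaged as \cref{lem:Tgraphon0}(ii)); parts (ii)--(iii) are restatements of \cite[Theorem 1.6]{bhattacharya2022ldp}; and compactness of $\Xi(F_\theta)$ follows from the identification $\Xi(F_\theta)=\arg\inf_{\nu\in\wmm}\{D(\nu|\rho)-\theta T_{W,\phi}(\nu)\}$ together with compactness of level sets from \cite[Corollary 1.3(ii)]{bhattacharya2022ldp}. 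So at the level of this paper, the ``proof'' is a citation; your sketch is what that cited paper carries out.

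On the obstacle you isolate---weak cut convergence versus the label-pinned empirical measure $\mathfrak{L}_n$---the paper resolves this not by building rearrangement invariance into the counting lemma, but by a direct relabeling reduction (stated in the remark immediately following the proof): since $\delta_\square(W_{Q_n},W)\to 0$, there exist permutations $\pi_n\in S_n$ with $d_\square(W_{Q_n^{\pi_n}},W)\to 0$, and setting $Y_i=X_{\pi_n(i)}$ leaves both $\mu^{\otimes n}$ and the Hamiltonian invariant. Hence one may assume strong cut convergence throughout, after which your counting lemma goes through without the rearrangement complication. This is simpler and more robust than the route you suggest.
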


\Cref{prop:freen} provides a characterization of the limiting free energy $Z_n(\theta)$ of the Gibbs measure $\mathfrak{R}_{n,\theta}$, introduced in \eqref{eq:gibbs}, via the expression~\eqref{eq:gibbsop}.
As illustrated by part (ii), the above proposition is a Mean-Field approximation result, and so, as often happens in Mean-Field approximation type results (see~\cite{augeri2020nonlinear,Chatterjee2016,eldan2018gaussian,yan2020nonlinear}), the limiting log normalizing constant/partition function is expressed in terms of a variational formula. Indeed, as indicated in \cref{def:g1}, $G_W(\cdot)$ is the limiting analogue of our Hamiltonian, and the function $\gamma(\cdot)$ introduced in \cref{def:tilt} is the entropy/Kullback-Leibler divergence term. The function $\beta(f)$ is the limiting analogue of a vector of tilts over which one optimizes in the variational approximation. 
The proof of the above proposition follows from~\cite[Theorems 1.1 and 1.6]{bhattacharya2024ldp}. Similar applications of large deviation principle to compute the asymptotics of the log-normalization constant can be found in~\cite[Theorem 3.1]{chatterjee2013estimating},~\cite[Theorem 1.5]{mukherjee2016estimation},~\cite[Theorem 2.8]{borgsdense2}, and \cite[Theorem 1.6]{Chatterjee2016}.

\begin{remark}\label{rem:implic}
Under assumptions~\eqref{eq:cut_con}~and~\eqref{eq:q}, ~\cite[Theorem 2.13]{borgs2018p} gives
\begin{equation}\label{eq:W_q}
    \|W\|_{q \Delta} < \infty,
\end{equation} 
for any $q>1$, a fact that we use throughout the paper. We note in passing that under stronger assumptions on $H$ and $\mu$ (similar to~\cite[Theorem 1.2]{bhattacharya2022normal}) it is possible to forego the requirement in~\eqref{eq:q} and replace it with weaker assumptions.
\end{remark}

\begin{remark}[Relaxing assumption to cut distance]
    While we have assumed convergence of $W_{Q_n}$ to $W$ in the cut norm (see \eqref{eq:cut_con}), for many of our results it can indeed be relaxed to convergence in cut distance (see \cite[Equation 2.1]{borgs2018p}). In fact, by a standard permutation invariance argument on $\mathbb{U}_n({\bf x})$, there is no loss of generality in assuming cut norm convergence instead of cut distance convergence for \cref{prop:freen}, part (ii).  However, part (iii) of \cref{prop:freen} does require the stronger assumption of convergence in cut norm, and convergence in cut distance is simply not enough. 
\end{remark}

\subsubsection{Replica-symmetry} The above proposition shows that the infinite dimensional optimization problem in the second line of \eqref{eq:gibbsop} is useful for understanding the Gibbs measure $\mathfrak{R}_{n,\theta}$ (see parts (ii) and (iii)).
A natural question is when does the set of optimizers of \eqref{eq:gibbsop} consist only of constant functions. Equivalently, borrowing terminology from statistical physics, we want to understand the \enquote{replica-symmetry} phase of the Gibbs measure $\mathfrak{R}_{n,\theta}$. 
Our first main result provides sufficient conditions for optimizers to be constant functions, and provide examples to show the necessity of those conditions. For this we need the following two definitions.
\begin{defn}\label{def:symmfndef}
Given a symmetric matrix $Q_n$, define a symmetric tensor 
$$\mathrm{Sym}[Q_n](i_1,\ldots,i_v):=\frac{1}{v!}\sum_{\sigma \in S_v} \prod_{(a,b)\in E(H)} Q_n\Big(i_{\sigma(a)}, i_{\sigma(b)}\Big) ,$$
where $S_v$ denote the set of all permutations of $[v]$.
Using this definition, a simple computation gives
\begin{align}\label{eq:ref_cond}
\mathbb{U}_n(\mathbf{X})=\frac{1}{n^v}\sum_{(i_1,\cdots,i_v)\in \mathcal{S}(n,v)}\Big(\prod_{a=1}^v X_{i_a}\Big)\mathrm{Sym}[Q_n](i_1,\cdots,i_v).
\end{align}

In a similar vein, given a symmetric function $W\in \mathcal{W} $, define the symmetric function
$$\mathrm{Sym}[W](x_1,\ldots,x_v):=\frac{1}{v!}\sum_{\sigma \in S_v} \prod_{(a,b)\in E(H)} W\Big(x_{\sigma(a)}, x_{\sigma(b)}\Big) .$$
As an example, if $Q_n$ is the adjacency matrix of a simple labeled graph, then $\mathrm{Sym}[Q_n](i_1,\cdots,i_v)$ counts the (scaled) number of copies of $H$ in $Q_n$ spanned by the vertices $(i_1,\cdots,i_v)$. A similar interpretation holds for $\mathrm{Sym}[W]$, which can be thought of as the continuum analogue of $\mathrm{Sym}[Q_n]$.
\\

 For any $W\in \mathcal{W}$, let 
\begin{align}\label{eq:rowsum}\mathcal{T}[\mathrm{Sym}[W]](x):=\int_{[0,1]^{v-1}} \mathrm{Sym}[W](x,x_2,\ldots ,x_v)\prod_{a=2}^v \,dx_a,\end{align}
provided the integral exists, and is finite. This can be thought of as the continuum analogue of the number of copies of $H$  passing through the vertex $x$ in the limiting graphon $W$. 
\end{defn}

\noindent To illustrate, if $H=K_{1,2}$, then we have
\begin{align*}
    \mathrm{Sym}[Q_n](i,j,k)=&\frac{1}{3}\Big[Q_n(i,j)Q_n(i,k)+Q_n(i,k) Q_n(j,k)+Q_n(i,j)Q_n(j,k)\Big], \\
    \mathrm{Sym}[W](x_1,x_2,x_3)=&\frac{1}{3}\Big[W(x_1,x_2) W(x_1,x_3)+W(x_1,x_2)W(x_2,x_3)+W(x_1,x_3)W(x_2,x_3)\Big]. 
\end{align*}
On the other hand, if $H=K_3$, then
\begin{align*}
    \mathrm{Sym}[Q_n](i,j,k)=Q_n(i,j)Q_n(j,k)Q_n(i,k), \\
    \mathrm{Sym}[W](x_1,x_2,x_3)=W(x_1,x_2) W(x_1,x_3)W(x_2,x_3). 
\end{align*}
In both cases, we have 
$$\mathbb{U}_n({\bf X})=\frac{1}{n^3}\sum_{(i_1,i_2,i_3)\in \mathcal{S}(n,3)} \left(\prod_{a=1}^3 X_{i_a}\right)\mathrm{Sym}[Q_n](i_1,i_2,i_3).$$

\begin{defn}\label{def:stochastic_nn}
Let $\mu$ be a measure in $\mathbb{R}$ and $\beta(.),\gamma(.),\cN$ be as in~\cref{def:tilt}. We will say $\mu$ is stochastically non-negative, if for any $t>0$, if $-t\in \cN$ then $t\in \cN$, and $\gamma(\beta(t))\le \gamma(\beta(-t)).$ 
\end{defn}

We now state the first main result of this paper.
\begin{thm}[Replica-symmetry]\label{prop:propopt}
Suppose we are in the setting of~\cref{prop:freen}. Then $\mathcal{T}[\mathrm{Sym}[W]](.)$ is finite a.s., and  the following conclusions hold:

(i) Any maximizer $f$ of the optimization problem~\eqref{eq:gibbsop} satisfies the fixed point equation
\begin{equation}\label{eq:propoptshow} 
	f(x)\stackrel{a.s.}{=}\alpha'\left(\theta v\int_{[0,1]^{v-1}} \mathrm{Sym}[W](x,x_2,\ldots ,x_v)\left(\prod_{a=2}^v f(x_a)\,dx_a\right)\right).
\end{equation}

(ii)
If $\theta\neq 0$ and $\mathcal{T}[\mathrm{Sym}[W]](\cdot)$ is not constant a.s., none of the maximizers in~\eqref{eq:gibbsop} are non-zero constant functions.

(iii) If $\mathcal{T}[\mathrm{Sym}[W]](\cdot)$ is constant a.s., and  $\theta W$ is strictly positive a.s., 
then all of the maximizers in~\eqref{eq:gibbsop} are constant functions, provided either 
(a) $v$ is even or (b) $\mu$ is stochastically non-negative according to \cref{def:stochastic_nn}.
\end{thm}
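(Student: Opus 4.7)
The plan is to handle Parts (i) and (ii) by a first-variation calculation on the functional $J(f):=\theta G_W(f)-\int_0^1\gamma(\beta(f(x)))\,dx$ appearing in~\eqref{eq:gibbsop}, Part (iv) by direct computation, and to reserve the main technical effort for Part (iii).

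For Part (i), use the symmetry of $\prod_a f(x_a)$ to rewrite $G_W(f)$ with $\mathrm{Sym}[W]$ in place of $\prod_{(a,b)\in E(H)}W$. At a maximizer $f$, take a bounded test perturbation $h$ supported on a set where $f(x)$ lies in the interior of $\mathcal{N}$, and impose $\frac{d}{d\ve}J(f+\ve h)|_{\ve=0}=0$. The $G_W$ variation produces $v\int h(x)\int_{[0,1]^{v-1}}\mathrm{Sym}[W](x,x_2,\ldots,x_v)\prod_{a=2}^v f(x_a)\,dx_2\cdots dx_v\,dx$, while the entropy variation produces $\int h(x)\beta(f(x))\,dx$ via the chain-rule identity $(\gamma\circ\beta)'(t)=\beta(t)$, itself a consequence of $\gamma'(\theta)=\theta\alpha''(\theta)$ and $\beta'(t)=1/\alpha''(\beta(t))$. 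Cancelling the arbitrary $h$ and applying $\alpha'$ to both sides gives~\eqref{eq:propoptshow}. Boundary points where $f(x)\in\partial\mathcal{N}$ are handled by one-sided perturbations combined with the extension of $\beta$ to $\mathrm{cl}(\mathcal{N})$; a.s.\ finiteness of $\mathcal{T}[\mathrm{Sym}[W]]$ follows from~\eqref{eq:W_q} and Fubini. For Part (ii), plugging $f\equiv c\ne 0$ into~\eqref{eq:propoptshow} yields $c=\alpha'\bigl(\theta v c^{v-1}\mathcal{T}[\mathrm{Sym}[W]](x)\bigr)$ a.s., and strict monotonicity of $\alpha'$ together with $\theta v c^{v-1}\ne 0$ forces $\mathcal{T}[\mathrm{Sym}[W]]$ to be a.s.\ constant, contradicting the hypothesis.

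For Part (iv)(a), if $\operatorname{supp}(\mu)\subseteq[0,\infty)$, then $\alpha'(\theta)=\mathbb{E}_{\mu_\theta}[X]\ge 0$ for every $\theta\in\R$, so $\mathcal{N}\subseteq[0,\infty)$; for any $t>0$, the condition $-t\in\mathcal{N}$ is false, and~\cref{def:stochastic_nn} holds vacuously. For (iv)(b), with $\mu=\tilde\mu_B$ the $B$-tilt of a symmetric measure $\tilde\mu$ and $B\ge 0$, we have $\alpha(\theta)=\tilde\alpha(B+\theta)-\tilde\alpha(B)$, so $\alpha'(\theta)=\tilde\alpha'(B+\theta)$ and $\beta(t)=\tilde\beta(t)-B$. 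Symmetry of $\tilde\mu$ makes $\tilde\alpha$ even, hence $\tilde\alpha'$ and $\tilde\beta$ are odd and $\tilde\gamma:=\mathrm{id}\cdot\tilde\alpha'-\tilde\alpha$ is even; in particular $\mathcal{N}=\tilde{\mathcal{N}}$ is symmetric about zero, establishing the first part of~\cref{def:stochastic_nn}. A direct computation gives $\gamma(\beta(t))=\tilde\gamma(\tilde\beta(t))-Bt+\tilde\alpha(B)$, whence $\gamma(\beta(t))-\gamma(\beta(-t))=-2Bt\le 0$ for every $t>0$ since $B\ge 0$.

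Part (iii) is the main obstacle and proceeds in two steps. Step A reduces to $f\ge 0$: when $\mu$ is stochastically non-negative, \cref{def:stochastic_nn} gives pointwise $\gamma(\beta(|t|))\le\gamma(\beta(t))$, hence $\int\gamma(\beta(|f|))\le\int\gamma(\beta(f))$, and combining $|\prod_a f(x_a)|\ge\prod_a f(x_a)$ with the sign structure of $\theta\,\mathrm{Sym}[W]$ forced by $\theta W>0$ (after case-splitting on the parity of $|E(H)|$) yields $\theta G_W(|f|)\ge\theta G_W(f)$, so $J(|f|)\ge J(f)$. When $v$ is even, $G_W(-f)=G_W(f)$ together with a similar sign/parity analysis on $\prod f(x_a)$ again reduces the problem to $f\ge 0$. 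Step B, for $f\ge 0$, is the core of the argument: combine the fixed-point identity from Part (i) with $\mathcal{T}[\mathrm{Sym}[W]]\equiv c_0$ and $\theta W>0$ to force the function $x\mapsto\int_{[0,1]^{v-1}}\mathrm{Sym}[W](x,x_2,\ldots,x_v)\prod_{a=2}^v f(x_a)\,dx_2\cdots dx_v$ to be a.s.\ constant in $x$, after which strict monotonicity of $\alpha'$ gives that $f$ is a.s.\ constant. The intended route is a Perron--Frobenius/FKG-type comparison exploiting positivity of the kernel and constant marginals, together with strict convexity of $\gamma\circ\beta$ (since $(\gamma\circ\beta)''=1/\alpha''\circ\beta>0$) to rule out non-constant maximizers via perturbations that simultaneously decrease the entropy penalty and do not decrease $\theta G_W$. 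The main technical hurdle is controlling $\theta G_W(f)$ against its constant analogue: unlike the entropy, the multilinear form does not admit a direct Jensen-type upper bound, and the positivity assumption $\theta W>0$ is expected to be essential to push the comparison through.
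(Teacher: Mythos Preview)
Your arguments for parts (i), (ii) and (iv) agree with the paper's. The genuine gap is in part (iii).

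Your Step B is not a proof: you propose a ``Perron--Frobenius/FKG-type comparison'' and then explicitly say that controlling $\theta G_W(f)$ against its constant analogue is a ``technical hurdle'' because ``the multilinear form does not admit a direct Jensen-type upper bound.'' In fact it does, and this is precisely the key idea you are missing. Once $\mathcal{T}[\mathrm{Sym}[W]]$ is constant (wlog $\equiv 1$) and $\mathrm{Sym}[W]>0$ a.s., the function $\mathrm{Sym}[W]$ is a probability density on $[0,1]^v$ with all one-dimensional marginals uniform. Writing $(Z_1,\ldots,Z_v)\sim \mathrm{Sym}[W]$, the generalized H\"older inequality with exponents $(v,\ldots,v)$ gives
\[
G_W(f)=\E\Big[\prod_{a=1}^v f(Z_a)\Big]\le \prod_{a=1}^v\Big(\E|f(Z_a)|^v\Big)^{1/v}=\int_0^1|f(x)|^v\,dx,
\]
since each $Z_a\sim\mathrm{Unif}[0,1]$. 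Hence
\[
\theta G_W(f)-\int_0^1\gamma(\beta(f(x)))\,dx\le \int_0^1\big[\theta|f(x)|^v-\gamma(\beta(f(x)))\big]\,dx\le \sup_{t\in\mathrm{cl}(\mathcal N)}\{\theta|t|^v-\gamma(\beta(t))\},
\]
with equality achieved by constant $f$. The equality case in H\"older forces $f$ to be a.s.\ constant. The two side conditions then dispose of the absolute value: if $v$ is even, $|t|^v=t^v$; if $\mu$ is stochastically non-negative, the inner supremum over $t$ is attained on $[0,\infty)$ because $\gamma(\beta(t))\le\gamma(\beta(-t))$ for $t\ge 0$.

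Your Step A is therefore unnecessary, and as written it is also shaky: for $v$ even the identity $G_W(-f)=G_W(f)$ does not by itself reduce a sign-changing $f$ to a non-negative one (you would still need $G_W(|f|)\ge G_W(f)$, which is exactly the missing comparison). The H\"older argument above bypasses any reduction to $f\ge 0$. Your fixed-point route in Step B is not obviously salvageable either: the Euler--Lagrange equation~\eqref{eq:propoptshow} can in principle have non-constant solutions even when $\mathcal{T}[\mathrm{Sym}[W]]$ is constant, so one cannot conclude constancy from the stationarity condition alone without the global upper bound that H\"older provides.
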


\begin{remark}\label{rem:fntomeas}
As is often the case, the solution to the Mean-Field optimization problem in \cref{prop:freen}, part (ii), is characterized by the  fixed point equation in \cref{prop:propopt} part (i). We utilize this fixed point equation to characterize sufficient conditions under which the optimizers are constant functions. This is of possible interest, as
 the construction of the map $\Xi$ in~\cref{def:tilt2} implies that  $f\in \mathcal{L}_p$ is a constant function iff $\Xi(f)\in \wmm$ is a product measure. Thus under the conditions of~\cref{prop:propopt} part (iii), any weak limit of the empirical measure $\mathfrak{L}_n$ (introduced in~\eqref{eq:ln}) under $\mathfrak{R}_{n,\theta}$ is a product measure. 
Further these product measures have first marginal $\mathrm{Unif}[0,1]$ and second marginal of the form $\mu_{\beta(t)}$ where $t$ satisfies the fixed point equation
$$t=\alpha'(\theta v t^{v-1} ),$$
by~\cref{prop:propopt} part (i), assuming $\mathcal{T}[\mathrm{Sym}[W]] \equiv 1$ for simplicity. In particular if $v=2$ and $\mu$ is supported on $\{-1,1\}$ with $\mu(1)=\exp(2B)/(1+\exp(2B))$ for some $B\in\R$, the above fixed point equation simplifies to
$$t=\tanh(2\theta t+B).$$
The solutions of this equation for $\theta\geq 0$, $B\in\R$ are well understood, see e.g.,~\cite[Page 2]{Comets1991}~and~\cite[page 144, Section 1.1.3]{dembo2010gibbs}. In~\cref{lem:fixsol} below we study a broader class of measures $\mu$, which includes this as a special case.
\end{remark}

Part (iii) of \cref{prop:propopt} provides a sufficient condition for replica symmetry, one of which is the requirement that the base measure $\mu$ is stochastically non-negative, which can be difficult to verify. In the lemma below, we provide simple sufficient conditions for a measure $\mu$ to be stochastically non-negative.

\begin{prop}\label{prop:mu_suff}
     $\mu$ is stochastically non-negative if one of the following conditions hold: 

(a) $\mu$ is supported on the non-negative half line, or 

(b) $\mu$ is a non-negative  tilt of a symmetric measure, i.e.~there exists $B\ge0$ and a symmetric measure $\mu^{(s)}$, such that $\frac{d\mu}{d\mu^{(s)}}(x)=\exp(Bx-C(x))$.
\end{prop}

\begin{ex}[Necessity of conditions on $\mu$ from \cref{prop:mu_suff}] To demonstrate the necessity of stochastic non-negativeness for $\mu$ in~\cref{prop:propopt} part (iii) and \cref{prop:mu_suff}, we provide an example of a measure $\mu$  and a graphon $W$ with $\mathcal{T}[\mathrm{Sym}[W]](\cdot)$ is constant a.s.~and $\theta W\ge 0$, but none of the maximizers are constant functions. Set $H=K_3$ and
\begin{align*}
W(x,y)=&0\text{ if }(x,y)\in \Big[0,\frac13\Big)^2 \text{ or }(x,y)\in \Big[\frac13,\frac23\Big)^2\text{ or }(x,y)\in  \Big[\frac23,1\Big)^2,\\
=&1\text{ otherwise }.
\end{align*}
In this case $W$ is the graphon corresponding to a complete tripartite graph. Let $\mu$ be a probability measure on $\{-1,1\}$ with $\mu(1)= e^{-4}/(e^{-4} + e^4)$. Set $\theta=9$ and 
\begin{align*}
f(x):=\begin{cases}-0.99 & \text{ if }0\leq x<\frac23\\ 
 +0.83 & \text{ if }\frac23\leq x\leq 1.\end{cases}
\end{align*}
Numerical computations show that
\begin{align*}
    \theta G_{W}(f)-\int_{[0,1]}\gamma(\beta(f(x)))dx > \sup_{t \in [-1,1]} \left\{\frac{2}{3}\theta t^v -\gamma(\beta(t))\right\}.
\end{align*}
Thus all global optimizers must be non-constant functions when $v=3$, and the measure $\mu$ is a negative tilt of a symmetric distribution.  Note that the function $W$ in this counterexample is not strictly positive a.s., but this can be circumvented by a continuity argument to allow for small positive values in the diagonal blocks.
\end{ex}

\begin{ex}[Necessity of $\theta>0$]\label{rem:thetnec}
The requirement $\theta> 0$ is indeed necessary in~\cref{prop:propopt} part (iii). To see a counterexample, consider the case $v=2$ and 
\begin{align}\label{eq:baje_W}
W(x,y)=&2\text{ if }(x,y)\in (0,.5)\times (.5, 1) \text{ or }(x,y)\in (.5,1)\times (0,.5), \nonumber\\
=&0\text{ otherwise }.
\end{align}
In words $W$ is the (scaled) limiting graphon of a complete bipartite graph. Let $\mu$ be a compactly supported probability measure which is symmetric about $0$. If $\theta$ is large and negative, then it is not hard to show that no optimizer of~\eqref{eq:gibbsop} is a constant function, and any optimizer is of the form
\begin{align}\label{eq:two_piece_f}
f(x)=&a\text{ if }0<x<0.5; \nonumber \\
=&b\text{ if }.5<x<1,
\end{align}
where $a$ and $b$ are of opposite signs.  Similar to the previous remark, even though $W$ is not strictly positive a.s., this can be circumvented by a continuity argument to allow for small positive values in the diagonal blocks. 
\end{ex}

\begin{ex}[Presence of both constant and non-constant optimizers]
    The requirement $\theta W\stackrel{a.s.}{>}0$ is necessary to ensure that all global maximizers are constants in~\cref{prop:propopt} part (iii). To see this, we now give an example with simultaneous constant and non-constant maximizers. Indeed, consider $H=K_2$ and $W$ given by
     \begin{align}
 W(x,y)=&2\text{ if }(x,y)\in (0,.5)\times (0, .5) \text{ or }(x,y)\in (.5,1)\times (.5,1), \nonumber\\
 =&0\text{ otherwise }.
 \end{align}
 In words, $W$ is the (scaled) limiting graphon for a disjoint union of complete graphs.
Let $\mu= \frac{1}{2} (\delta_{+1}+\delta_{-1})$. It is not hard to show any optimizer is two-piece constants, $f$ is given by~\eqref{eq:two_piece_f}, where $a,b$ are solutions of $x= \tanh(\theta x)$. If $\theta>1$, the equation has three solutions $(t,0,-t)$ with $t= t_\theta >0$. Hence, there exists four optimizers $(t,t), (-t,-t), (t,-t), (-t,t)$. Two of these optimizers are constant functions whereas two of them are not.
\end{ex}


\subsubsection{Weak laws and tail bounds} Having studied the optimizers of the limiting free energy under model \eqref{eq:gibbs} in \cref{prop:propopt}, the next natural question is to obtain weak laws for various statistics of interest under \eqref{eq:gibbs}. Some popular examples include the Hamiltonian $\mathbb{U}_n$ (see \eqref{eq:U}), the global magnetization $\sum_{i=1}^n X_i$ or other interesting linear combinations of $X_i$'s. In the sequel, we will obtain a family of such weak limits in a unified fashion. Along the way, we will show that the vector of conditional expectations of $X_i$ (given $(X_j)_{j\ne i}$) converge to the optimizers of the limiting free energy (see \cref{cor:conmeanint} for a clean version of this statement).
The key tool that will help us address these question simultaneously is a sharp analysis of the vector of \emph{local fields}, which we define below.
\begin{defn}[Local fields]\label{def:ocalmag}
Define the local magnetization/field at the $i$-th observation as follows:
\begin{align}\label{eq:mdeff}
	m_i:=\frac{v}{n^{v-1}}\sum_{(i_2,\ldots ,i_v)\in \mathcal{S}(n,v,i)} \tQh(i,i_2,\ldots ,i_v)\left(\prod_{a=2}^v X_{i_a}\right),
\end{align}
for $i\in [n]$. Here, for  $i \in [n]$, $\mathcal{S}(n,v,i)$ denotes the set of all distinct tuples in
$[n]^{v-1}$, such that none of the elements equal to $i$, and $\mathrm{Sym}[Q_n]$ is as in \cref{def:symmfndef}. Set $\mathbf{m}:=(m_1,\ldots ,m_n)$. Following \eqref{eq:ln},  consider the associated empirical measure
\begin{equation}\label{eq:empmi}
	\tfm=\frac{1}{n}\sum_{i=1}^n \delta_{\left(\frac{i}{n},m_i\right)},
\end{equation}
\end{defn}
The $m_i$'s defined in~\eqref{eq:mdeff} are local magnetizations/fields which capture ``how well'' one can predict $X_i$ given all $X_j$, $j\neq i$. More precisely, using the fact that the Hamiltonian $\mathbb{U}_n({\bf X})$ can be written in terms of the symmetrized tensor $\mathrm{Sym}(Q_n)$ (see \eqref{eq:ref_cond}), 
under model~\eqref{eq:gibbs} the conditional distribution of $X_i$ given $X_j$, $j\neq i$ is completely determined by $m_i$ in the following manner:
  \begin{equation}\label{eq:cond}
  \frac{d\mathfrak{R}_{n,\theta}(x_i|x_j,\ j\neq i)}{d\mu(x_i)}=\mu_{\theta m_i},
  \end{equation}
  with $\mu_{.}$ as in \cref{def:tilt}. In particular, 
  by~\eqref{eq:cond} and~\cref{def:tilt}, 
     \begin{equation}\label{eq:condex}
     \E[X_i|X_j,\ j\neq i]=\alpha'(\theta m_i).
     \end{equation}
  Consequently, understanding the behavior of the vector ${\bf m}$ or
  \begin{equation}\label{eq:alpha_define}
      \boldsymbol{\alpha}:=\alpha'(\theta {\bf m})=(\alpha'(\theta m_1),\ldots ,\alpha'(\theta m_n)),
  \end{equation}
  plays an important role in obtaining correlation bounds, tail decay estimates and fluctuations for such Gibbs measures (see~\cite{gheissari2019ising,Chatterjee2007,deb2020fluctuations,deb2020detecting,bhattacharya2025sharp}~and the references therein). The major focus of the existing literature is on the special case of $v=2$ and $\mu$ supported on $\{-1,1\}$, which is not needed in our paper.
  \\
  
   Our second main result gives a weak law for the vector ${\bf m}$, in terms of the empirical measure $\tfm$. For any measure $\nu\in \wmm_p$ (see \cref{def:M}), define 
\begin{equation}\label{eq:varthdef}
\vartheta_{W,\nu}(u):=v\E\left[\mathrm{Sym}[W](U_1,\ldots ,U_v)\left(\prod_{a=2}^v V_a\right)\bigg|U_1=u\right],\quad \mbox{for}\ u\in [0,1],
\end{equation}
	where $(U_1,V_1),\ldots ,(U_v,V_v)\overset{i.i.d.}{\sim}\nu$, and $\mathrm{Sym}[.]$ is as in~\cref{def:symmfndef}. In the Theorem below, we will provide sufficient conditions under which $\vartheta_{W,\nu}(\cdot)$ is well-defined. While the definition of $\vartheta_{W,\nu}$ may seem abstract at first, it simplifies nicely in the context of \cref{prop:propopt}. To see this, assume that $(U,V) \sim \nu\in \Xi(F_{\theta})$ for some $\theta\in\R$, where $F_{\theta}$ is as in~\cref{prop:freen} part (iii).  By \cref{def:tilt2} we have $\E[V|U=u]=f(u)$ for some $f\in F_{\theta}$. So, 
 \begin{align*}
     \vartheta_{W,\nu}(u)&=v\E\left[\mathrm{Sym}[W](u,U_2,\ldots ,U_v)\left(\prod_{a=2}^v \E[V_a|U_a]\right)\right]\\ &=v\int_{[0,1]^{v-1}} \mathrm{Sym}[W](u,u_2,\ldots ,u_v)\left(\prod_{a=2}^v f(u_a)\,d u_a\right).
 \end{align*}
 By invoking \eqref{eq:propoptshow}, we then get for a.e. $u\in [0,1]$,
 \begin{equation}\label{eq:calter}
     f(u)=\alpha'(\theta \vartheta_{W,\nu}(u)).
 \end{equation}
 Therefore, there is a direct one-one correspondence between the two sets of functions $F_{\theta}$ and $\{\vartheta_{W,\nu}(.),\nu\in \Xi(F_{\theta})\}$. This observation will be crucial in the proof of \cref{cor:conmeanint} below.

 We are now in a position to state our second main result.
  
\begin{thm}\label{lem:wealimi}
	Suppose~\eqref{eq:tailp} holds for some $p\in [v,\infty]$. Assume that~\eqref{eq:cut_con} and \eqref{eq:q} holds with some $q$ satisfying $\frac{1}{p}+\frac{1}{q}<1$. Then the following conclusions hold: 
 
 \noindent (i) With $F_{\theta}$ as in~\cref{prop:freen} part (iii), 
 and $\vartheta_{W,\nu}$ as in \eqref{eq:varthdef}, we have $\Xi(F_{\theta})\subseteq \wmm_p$ and $\vartheta_{W,\nu}(\cdot)$ is well-defined for every $\nu\in \wmm_p$,  a.s. on $[0,1]$. 
 
 \noindent (ii) Set
	$$\mathfrak{B}_{\theta}:=\{\mathrm{Law}(U,\vartheta_{W,\nu}(U)): \nu\in \Xi(F_{\theta})\}\subset \wmm.$$
	Then we have:
\begin{equation}\label{eq:weak}	
 d_{\ell}\left(\tfm,\mathfrak{B}_{\theta}\right)\overset{P}{\longrightarrow} 0.
 \end{equation}
\end{thm}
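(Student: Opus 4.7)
The plan is to realize $m_i$ as, up to a lower-order boundary remainder, the value at $u=i/n$ of the functional $\vartheta_{W_{Q_n},\mathfrak{L}_n(\bx)}(u)$, and then to transfer the weak law $\mathfrak{L}_n(\bx)\to \Xi(F_\theta)$ supplied by~\cref{prop:freen} through a continuity argument for the map $\nu\mapsto\vartheta_{W,\nu}(\cdot)$.

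For part (i), for any $f\in F_\theta\subseteq\mathcal{L}$ the second marginal of $\Xi(f)$ is $\int_0^1 \mu_{\beta(f(u))}\,du$; since~\eqref{eq:tailp} supplies exponential moments of order $p$, each tilt $\mu_{\beta(f(u))}$ has $p$-th moment dominated by a function of $|f(u)|^p$ and $\gamma(\beta(f(u)))$ via Fenchel duality. Integrating in $u$ is then finite because $f\in\mathcal{L}$ gives $\int_0^1|f(u)|^p\,du<\infty$ and any optimizer in~\eqref{eq:gibbsop} must satisfy $\int_0^1\gamma(\beta(f(u)))\,du<\infty$. For $\vartheta_{W,\nu}$ with $\nu\in \wmm_p$, applying H\"older with exponents $(q\Delta,p,\ldots,p)$ to $|\mathrm{Sym}[W](U_1,\ldots,U_v)|\prod_{a=2}^v|V_a|$ yields the required bound, with $\|W\|_{q\Delta}<\infty$ coming from~\cref{rem:implic} and the \emph{strict} inequality $1/p+1/q<1$ providing H\"older slack unavailable in~\cref{prop:freen}.

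For part (ii) I first write $m_i=\vartheta_{W_{Q_n},\mathfrak{L}_n(\bx)}(i/n)+R_{n,i}$, where $R_{n,i}$ collects those terms in the unrestricted $[n]^{v-1}$-sum in which some index repeats or equals $i$; there are $O(n^{v-2})$ such tuples, and combining this with $\|W_{Q_n}\|_{q\Delta}\le C$ and the uniform $p$-th moment bound $\frac{1}{n}\sum_i \E_{\R_{n,\theta}}|X_i|^p\le C$ from~\cref{lem:basicprop} gives $\frac{1}{n}\sum_i |R_{n,i}|\to 0$ in probability. Along a subsequence we may assume $\mathfrak{L}_n(\bx)\to \nu^*\in\Xi(F_\theta)$ in $d_\ell$; the main step is to show
\[
\frac{1}{n}\sum_{i=1}^n\left|\vartheta_{W_{Q_n},\mathfrak{L}_n(\bx)}(i/n)-\vartheta_{W,\nu^*}(i/n)\right|\xrightarrow{P}0,
\]
from which $d_\ell(\tfm,\mathrm{Law}(U,\vartheta_{W,\nu^*}(U)))\to 0$ follows because every $1$-Lipschitz $[-1,1]$-valued observable absorbs the pointwise error (capped at $2$). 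To establish this functional convergence I would truncate the spin variables at level $K$: on truncated integrands, cut convergence $W_{Q_n}\to W$ forces $\mathrm{Sym}[W_{Q_n}]\to \mathrm{Sym}[W]$ in cut norm via a standard graphon manipulation, and $d_\ell$-convergence of $\mathfrak{L}_n(\bx)$ to $\nu^*$ handles the empirical marginals, yielding convergence of the bounded integrals. Truncation errors are controlled via H\"older using $\|W_{Q_n}\|_{q\Delta}\le C$ together with the uniform $p$-th moment bound, and once again the slack from $1/p+1/q<1$. Compactness of $\Xi(F_\theta)$ from~\cref{prop:freen} and a subsubsequence argument then upgrade the conclusion to convergence to the full set $\mathfrak{B}^*_\theta$.

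The main obstacle is the unboundedness of $\prod_{a=2}^v V_a$ appearing inside $\vartheta_{W,\nu}$: the metric $d_\ell$ tests only bounded Lipschitz observables, so continuity of $\nu\mapsto\vartheta_{W,\nu}$ cannot be read off directly from the weak convergence of $\mathfrak{L}_n(\bx)$. Managing simultaneously cut convergence of the kernels, $d_\ell$-convergence of the empirical measures, and uniform-in-$n$ $p$-th moment control on the spins---precisely what the strict inequality $1/p+1/q<1$ buys---is the technical heart of the argument.
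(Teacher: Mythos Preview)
Your outline is essentially the paper's own route: identify $m_i$ (up to a boundary remainder) with $\vartheta_{W_{Q_n},\mathfrak{L}_n(\bx)}$ at $i/n$, then push the weak law $\mathfrak{L}_n(\bx)\to\Xi(F_\theta)$ through continuity of $(W,\nu)\mapsto\vartheta_{W,\nu}$, with spin truncation, cut convergence on bounded integrands, and H\"older slack from $1/p+1/q<1$ as the tools. The paper packages the continuity as a standalone lemma on $\Upsilon(W,\nu)=\mathrm{Law}(U,\vartheta_{W,\nu}(U))$ (its \cref{lem:pivotlem}) and handles the set-valued limit via a continuous-mapping-for-sets device (\cref{lem:proj_cont_map}) rather than your subsubsequence argument; otherwise the ingredients coincide.

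Two places where your sketch is thinner than the actual work requires. First, the remainder $R_{n,i}$: counting $O(n^{v-2})$ repeated-index tuples plus one H\"older step does not close, because $\|W_{Q_n}\|_{q\Delta}$ does not control $|\mathrm{Sym}[Q_n]|$ on the lower-dimensional diagonal slices where the repeats live. The paper truncates the spins to $X_i^{(L)}$ \emph{before} comparing restricted and unrestricted sums, and even then needs a further truncation of $Q_n$ at a level $M$ to finish (its \cref{lmm:un_vn_same}). Second, the claim that ``cut convergence $W_{Q_n}\to W$ forces $\mathrm{Sym}[W_{Q_n}]\to\mathrm{Sym}[W]$ in cut norm via a standard graphon manipulation'' is precisely the nontrivial content of \cref{lem:pivotlem}(ii): the paper gets there by truncating on the row sums $\mathfrak{R}(u;W_k)$ of $\mathrm{Sym}[|W_k|]$ and then running a method-of-moments argument in which the kernel factors are swapped from $W_k$ to $W_\infty$ one at a time. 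You are right to call this the technical heart; it is more than a standard counting lemma.
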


The above theorem gives a weak limit for the empirical measure of the local field vector $\bf m$. The weak law for the empirical measure of the conditional means (introduced in \eqref{eq:condex}) then follows from \cref{lem:wealimi} by a continuous mapping type argument. The limit in that case will naturally be the set 
\begin{align}\label{eq:tildef}
\mathfrak{B}^*_{\theta}:=\{\mbox{Law}(U,\alpha'(\theta \vartheta_{W,\nu}(U))):\ \nu\in \Xi(F_{\theta})\},
\end{align}
We stress here that \emph{no assumption of replica-symmetry is necessary} for~\cref{lem:wealimi} to hold.

\begin{remark}\label{rem:extend}
Given $\nu\in\Xi(F_{\theta})$, let $\nu^{(2|1)}(\cdot)$ denote the conditional distribution of the second coordinate given the first coordinate. The proof of~\eqref{eq:weak} can  be adapted to show the following stronger conclusion:
$$d_{\ell}\left(\frac{1}{n}\sum_{i=1}^n \delta_{\left(\frac{i}{n},X_i,m_i\right)},\mathfrak{B}^{(2|1)}_{\theta}\right)\overset{P}{\longrightarrow}0,$$
where
$$\mathfrak{B}^{(2|1)}_{\theta}:=\{(U,\nu^{(2|1)}(U),\vartheta_{W,\nu}(U)):\ \nu\in \Xi(F_{\theta})\}.$$
Since this version is not necessary for our applications, we do not prove it here.
\end{remark}


\noindent 
In order to obtain  weak laws for common statistics of interest using \cref{lem:wealimi}, we require appropriate tail estimates for the $X_i$'s, the $m_i$'s and the $\alpha'(\theta m_i)$'s. In particular, we will derive \emph{exponential tail bounds} for the said quantities below, which is of possible independent interest.
\begin{thm}\label{lem:basicprop}
    Consider the same setting as in \cref{prop:freen}. Then the following conclusions hold:
    \begin{enumerate}

     \item[(i)] There exists $K_0>0$, free of $n$, such that for all $K\ge K_0$ the following holds:
     \begin{align}\label{eq:lip11}
  \mathfrak{R}_{n,\theta}\left(\frac{1}{n}\sum_{i=1}^n |X_i|^p\ge K\right)\le 3 \exp\bigg(-n\frac{K}{K_0}\bigg) ,\\
    \label{eq:lip13}
    \mathfrak{R}_{n,\theta}\left(\frac{1}{n}\sum_{i=1}^n |m_i|^q\ge K\right)<3 \exp\bigg(-n\frac{K}{K_0}\bigg),\\
    \label{eq:lip14}
    \mathfrak{R}_{n,\theta}\left(\frac{1}{n}\sum_{i=1}^n |\alpha'(\theta m_i)|^p\ge K\right)<3 \exp\bigg(-n\frac{K}{K_0}\bigg).
    \end{align}
    As a consequence, respectively we have $$\sum_{i=1}^n \E|X_i|^p=O(n),\quad \sum_{i=1}^n \E|m_i|^q=O(n),\quad \sum_{i=1}^n \E|\alpha'(\theta m_i)|^p=O(n).$$
    
    \item[(ii)] Moreover, 
     \begin{align}\label{eq:lip12}
    \sup_{\nu\in\Xi(F_{\theta})} \fmm_p(\nu)<\infty, \quad \sup_{\nu\in\mathfrak{B}_{\theta}} \fmm_q(\nu)<\infty, \quad \sup_{\nu\in\mathfrak{B}^*_{\theta}} \fmm_p(\nu)<\infty,
    \end{align}
where $\fmm_p(\nu), \fmm_q(\nu)$ are given by \cref{def:M}.
    \end{enumerate}
\end{thm}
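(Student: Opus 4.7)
The plan begins with a routine observation: the implications in part (i) from exponential tail decay to $O(n)$ moment bounds need no work. For any non-negative $Y_n$, $\limsup_n n^{-1}\log\P(Y_n\ge C)<0$ for every $C\ge C_0$ gives $\E Y_n\le C_0+\int_{C_0}^\infty \P(Y_n\ge C)\,dC=C_0+O(1/n)$ by tail integration, and applied to $Y_n=\tfrac1n\sum|X_i|^p$ this yields $\sum_i\E|X_i|^p=O(n)$ (and similarly for the other two). The substantive content is therefore the three exponential tail bounds themselves, plus part (ii). My unifying device will be a Cauchy--Schwarz reduction from the Gibbs measure to the product measure $\mu^{\otimes n}$, using that $\sup_n Z_n(2\theta)<\infty$ by \cref{prop:freen}:
\[
\P_{\R_{n,\theta}}(Y_n\ge C)=e^{-nZ_n(\theta)}\E_{\mu^{\otimes n}}[e^{n\theta\mathbb{U}_n}\mathbf{1}\{Y_n\ge C\}]\le e^{n[\tfrac12 Z_n(2\theta)-Z_n(\theta)]}\sqrt{\P_{\mu^{\otimes n}}(Y_n\ge C)}.
\]
It then suffices, under $\mu^{\otimes n}$, to produce a rate function for each empirical quantity in \eqref{eq:lip11}--\eqref{eq:lip14} diverging to $+\infty$ as $C\to\infty$; note that convexity of $Z$ together with $Z(0)=0$ forces $Z(2\theta)-2Z(\theta)\ge 0$, so divergence past this nonnegative constant is what is needed.

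For \eqref{eq:lip11} the rate function is immediate from Cram\'er's theorem, using $\E_\mu e^{\lambda|X|^p}<\infty$ for every $\lambda$. For \eqref{eq:lip13}, I would establish a deterministic bound
\[
\tfrac1n\sum_{i=1}^n|m_i|^q\le C_H\,\|W_{Q_n}\|_{q\Delta}^{q|E(H)|}\,\bigl(\tfrac1n\sum_j|X_j|^p\bigr)^{q(v-1)/p}
\]
via generalized H\"older applied to the $(v-1)$-linear form defining $m_i$; the exponent $q(v-1)/p$ is $\le 1$ when $1/p+1/q\le 1$ and $p\ge v$, and if $q$ is so large that this fails one first proves concentration for $q'=p/(v-1)$ and then interpolates using a uniform $\ell^\infty$ control on $|m_i|$ in terms of $\|\mathbf{X}\|_\infty$ (trivial if $p=\infty$; otherwise obtained from the same Gibbs-to-product reduction). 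This reduces the tail event in \eqref{eq:lip13} to that in \eqref{eq:lip11}. For \eqref{eq:lip14}, Young's inequality inside $\alpha(\theta)=\log\int e^{\theta x}\,d\mu(x)$ yields $\alpha(\theta)\le c_1+c_2|\theta|^{p/(p-1)}$ under \eqref{eq:tailp}, hence $|\alpha'(t)|\le c_3|t|^{1/(p-1)}+c_4$; therefore
\[
\tfrac1n\sum_i|\alpha'(\theta m_i)|^p\le c\cdot\tfrac1n\sum_i|m_i|^{p/(p-1)}+c',
\]
and since $p/(p-1)\le q$ (again by $1/p+1/q\le 1$), Jensen bounds the right-hand side by a power of $\tfrac1n\sum|m_i|^q$, reducing \eqref{eq:lip14} to \eqref{eq:lip13}.

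Part (ii) then becomes a passage-to-the-limit argument. By \cref{prop:freen}(iii), any $\nu\in\Xi(F_\theta)$ arises as a weak limit of $\mathfrak{L}_n(\mathbf{X})$ under $\R_{n,\theta}$ along some subsequence; the exponential concentration in \eqref{eq:lip11} supplies uniform integrability of $|X|^p$, so Fatou yields $\fmm_p(\nu)\le\limsup_n n^{-1}\sum_i\E|X_i|^p<\infty$, uniformly in $\nu\in\Xi(F_\theta)$. The uniform bounds for $\mathfrak{B}^*_\theta$ and $\widetilde{\mathfrak{B}}_\theta$ follow identically from \cref{lem:wealimi} together with \eqref{eq:lip13} and \eqref{eq:lip14} respectively.

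I expect the main technical obstacle to be the deterministic H\"older bound on $\tfrac1n\sum|m_i|^q$: bookkeeping which power of $\|W_{Q_n}\|_{q\Delta}$ arises, verifying uniformity in $n$ via \eqref{eq:q}, and especially handling the regime $q>p/(v-1)$ where an interpolation step is required. Everything else (Cram\'er's bound, the Young-inequality estimate on $\alpha'$, tail integration, and weak-convergence with Fatou) is standard machinery.
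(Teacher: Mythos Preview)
Your approach to part (i) is essentially the paper's, with one cosmetic difference: the paper obtains \eqref{eq:lip11} by citing \cite{bhattacharya2022ldp}, whereas your Cauchy--Schwarz reduction to $\mu^{\otimes n}$ plus Cram\'er is a clean self-contained alternative. The deterministic H\"older bound reducing \eqref{eq:lip13} to \eqref{eq:lip11}, and the growth estimate $|\alpha'(t)|\lesssim 1+|t|^{1/(p-1)}$ reducing \eqref{eq:lip14} to \eqref{eq:lip13}, are exactly what the paper does (the latter via \cref{lem:fixsol0}). One unnecessary complication: your worry about the exponent $q(v-1)/p>1$ and the proposed interpolation step is superfluous. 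The deterministic inequality $\tfrac1n\sum_i|m_i|^q\le A\bigl(\tfrac1n\sum_j|X_j|^p\bigr)^\gamma$ transfers the exponential tail regardless of whether $\gamma\le 1$, since $\{\tfrac1n\sum|m_i|^q\ge C\}\subset\{\tfrac1n\sum|X_j|^p\ge (C/A)^{1/\gamma}\}$ and the rate on the right still diverges as $C\to\infty$. (Incidentally, your claim that $\gamma\le 1$ always holds under $1/p+1/q\le 1$ and $p\ge v$ is false: take $p=v=2$, $q=3$.) The paper simply does not raise this issue.

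Your part (ii) argument, however, has a genuine gap and a circularity. First, \cref{prop:freen}(iii) asserts only that $d_\ell(\mathfrak{L}_n(\mathbf{X}),\Xi(F_\theta))\to 0$ in probability; it does \emph{not} say that every $\nu\in\Xi(F_\theta)$ arises as a weak limit of $\mathfrak{L}_n$ along some subsequence. Convergence to a set means limit points lie in the set, not that the set is exhausted by limit points. Your Fatou step therefore only bounds $\fmm_p(\nu)$ for the actual limit points, which may be a proper subset of $\Xi(F_\theta)$. Second, and more seriously, invoking \cref{lem:wealimi} to handle $\mathfrak{B}^*_\theta$ and $\widetilde{\mathfrak{B}}_\theta$ is circular: the paper proves \cref{lem:basicprop} first and uses it (specifically \eqref{eq:lip11}--\eqref{eq:lip14} and \eqref{eq:lip12}) throughout the proof of \cref{lem:wealimi}. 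The paper's route for part (ii) avoids both problems by staying deterministic. Once $\sup_{\nu\in\Xi(F_\theta)}\fmm_p(\nu)<\infty$ is known (cited from \cite{bhattacharya2022ldp}, and hence $\sup_{f\in F_\theta}\|f\|_p<\infty$), the \emph{same} H\"older computation used for \eqref{eq:lip13}, now applied to the limiting $W$ and to $f\in F_\theta$ in place of $W_{Q_n}$ and $\mathbf{X}$, gives
\[
\sup_{\nu\in\mathfrak{B}^*_\theta}\fmm_q(\nu)\le v^q\,\|W\|_{q\Delta}^{q|E(H)|}\,\sup_{f\in F_\theta}\|f\|_p^{q(v-1)}<\infty,
\]
and the bound for $\widetilde{\mathfrak{B}}_\theta$ follows analogously via the $\alpha'$ growth estimate. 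In short: reuse the inequalities from part (i) on the limit objects directly, rather than passing through weak convergence.
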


In order to interpret part (ii) of the above theorem, note that $\mathfrak{L}_n({\bf X})$, $\mathfrak{L}_n({\bf m})$, and $\mathfrak{L}_n({\boldsymbol{\alpha}})$ converge weakly in probability to the set of probability measures $\Xi(F_{\theta})$ (by \eqref{eq:multweaklim}), $\mathfrak{B}_{\theta}$ (by \eqref{eq:weak}), and $\mathfrak{B}^*_{\theta}$ (as discussed around \eqref{eq:tildef}) respectively.  \cref{lem:basicprop} part (ii) shows that these limiting set of measures have uniformly bounded moments of a suitable order. 

\vspace{0.05in}

\noindent   In view of the above results, coupled with the observation made in \eqref{eq:calter}, it seems intuitive to expect a correspondence between elements of $F_{\theta}$ and the map $u\mapsto \alpha'(\theta m_{\lceil n u\rceil}))$. This is made precise in the following corollary.

\begin{cor}\label{cor:conmeanint}
    In the setting of \cref{lem:wealimi}, we have 
    \begin{equation}\label{eq:lpcon}
 \inf_{f\in F_{\theta}}\int_0^1 |\alpha'(\theta m_{\lceil n u\rceil})-f(u)|^{p'}\,du\overset{P}{\longrightarrow} 0,
 \end{equation}
	for any $p'<p$, under the measure~\eqref{eq:gibbs}.	
\end{cor}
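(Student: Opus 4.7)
The plan is to combine~\cref{lem:wealimi} (the weak law for the local-field empirical measure $\tfm$) with the exponential tail bounds of~\cref{lem:basicprop} via a subsequence principle, and then upgrade the resulting weak convergence of the bivariate empirical measure to $L^{p'}$-convergence of the random step function $\hat{f}_n(u):=\alpha'(\theta m_{\lceil nu\rceil})$ to the set $F_\theta$.

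First I would transfer the conclusion of~\cref{lem:wealimi} from $\tfm$ to the empirical measure $\mathfrak{L}_n(\boldsymbol{\alpha})$, where $\boldsymbol{\alpha}:=(\alpha'(\theta m_1),\ldots,\alpha'(\theta m_n))$. Since $(u,y)\mapsto(u,\alpha'(\theta y))$ is continuous and the family $\{\mathfrak{L}_n(\boldsymbol{\alpha})\}$ is tight by~\eqref{eq:lip14}, the $d_\ell$-convergence passes to the pushforward, yielding $d_\ell(\mathfrak{L}_n(\boldsymbol{\alpha}),\widetilde{\mathfrak{B}}_\theta)\overset{P}{\longrightarrow} 0$. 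By the fixed-point identity~\eqref{eq:calter} one has $\widetilde{\mathfrak{B}}_\theta=\{\mathrm{Law}(U,f(U)):f\in F_\theta\}$ with $U\sim\mathrm{Unif}[0,1]$, so every limit is supported on the graph of some $f\in F_\theta$.

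Next, by the subsequence principle it suffices to prove almost-sure convergence along subsequences. Given any subsequence, the exponential bound in~\eqref{eq:lip14} combined with Borel--Cantelli produces a further extraction on which $\sup_n\|\hat{f}_n\|_{L^p}<\infty$ a.s. Compactness of $\widetilde{\mathfrak{B}}_\theta$, inherited from compactness of $\Xi(F_\theta)$ in~\cref{prop:freen} part (iii), then permits one more extraction along which $\mathfrak{L}_n(\boldsymbol{\alpha})$ converges weakly a.s. to a single limit $\nu=\mathrm{Law}(U,f(U))$ for some $f\in F_\theta$.

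The core step is to conclude $\int_0^1|\hat{f}_n(u)-f(u)|^{p'}du\to 0$ a.s. along this sub-subsequence. For any continuous $\phi:[0,1]\to\R$, the test function $g_\phi(u,y):=|y-\phi(u)|^{p'}$ is continuous and satisfies $|g_\phi(u,y)|\le C_\phi(1+|y|^{p'})$, so the strict inequality $p'<p$ together with $\sup_n\|\hat{f}_n\|_{L^p}<\infty$ yields uniform integrability of $g_\phi$ against $\{\mathfrak{L}_n(\boldsymbol{\alpha})\}$. Hence $\int g_\phi\,d\mathfrak{L}_n(\boldsymbol{\alpha})\to\int_0^1|f(u)-\phi(u)|^{p'}\,du$; replacing the Riemann sum $\frac{1}{n}\sum_i|\alpha'(\theta m_i)-\phi(i/n)|^{p'}$ by $\int_0^1|\hat{f}_n(u)-\phi(u)|^{p'}\,du$ introduces an error controlled by the modulus of continuity of $\phi$ together with the $L^p$-bound on $\hat{f}_n$, and therefore vanishes. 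This gives $\|\hat{f}_n-\phi\|_{L^{p'}}\to\|f-\phi\|_{L^{p'}}$. Since $f\in L^p([0,1])$ by~\eqref{eq:lip12} and continuous functions are dense in $L^{p'}([0,1])$, choosing $\phi$ close to $f$ in $L^{p'}$ and invoking the triangle inequality yields $\|\hat{f}_n-f\|_{L^{p'}}\to 0$ a.s. The main technical obstacle is precisely this uniform-integrability step, where the strict gap $p'<p$ is essential to convert weak convergence into an $L^{p'}$ statement.
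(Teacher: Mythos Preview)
Your proof is correct and follows the same overall route as the paper: first transfer the weak law for $\tfm$ to $\mathfrak{L}_n(\boldsymbol{\alpha})$ via the continuous map $(u,y)\mapsto(u,\alpha'(\theta y))$, identify the limit set $\widetilde{\mathfrak{B}}_\theta$ with $\{\mathrm{Law}(U,f(U)):f\in F_\theta\}$ through~\eqref{eq:calter}, and then upgrade weak convergence to $L^{p'}$-convergence using the strict gap $p'<p$. The paper organizes both steps as applications of an abstract continuous-mapping-to-a-set lemma (\cref{lem:proj_cont_map}): once with the map $G_1(u,y)=(u,\alpha'(\theta y))$, and once with the map $G_2:\mathrm{Law}(U,f(U))\mapsto f\in L^{p'}[0,1]$, whose continuity on $\{\|f\|_p\le C\}$ is isolated as \cref{lem:specond}. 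Your version instead inlines the content of \cref{lem:specond} directly via the test functions $g_\phi(u,y)=|y-\phi(u)|^{p'}$ and the density of continuous $\phi$, combined with a subsequence/Borel--Cantelli reduction. This is more elementary and self-contained; the paper's packaging is cleaner to reuse.

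Two minor points. First, the tightness needed for the pushforward in your first paragraph is that of $\{\mathfrak{L}_n(\mathbf{m})\}$, which comes from~\eqref{eq:lip13}, not~\eqref{eq:lip14}. Second, the sentence ``one more extraction along which $\mathfrak{L}_n(\boldsymbol{\alpha})$ converges weakly a.s.\ to a single limit $\nu$'' is loose: since the limiting element of $\widetilde{\mathfrak{B}}_\theta$ may vary with $\omega$, no single further subsequence need work for all $\omega$ simultaneously. The clean formulation is to fix $\omega$ in the almost-sure event where $\sup_n\|\hat f_n\|_{L^p}<\infty$ and $d_\ell(\mathfrak{L}_n(\boldsymbol{\alpha}),\widetilde{\mathfrak{B}}_\theta)\to 0$, and then argue by contradiction at that fixed $\omega$ using compactness of $\widetilde{\mathfrak{B}}_\theta$; your uniform-integrability step then applies to the $\omega$-dependent extraction and yields the contradiction.
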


\begin{remark}\label{rem:probin}
     
     Recall from \eqref{eq:condex} that $\alpha'(\theta m_i)=\E[X_i|X_j,\ j\neq i]$. Therefore~\eqref{eq:lpcon} shows that the functions in $F_{\theta}$ are ``close" to the vector of conditional expectations of $X_i$'s given all the other coordinates. In particular, if  $F_{\theta}=\{f\}$ is a singleton and $f$ is continuous on $[0,1]$, then~\eqref{eq:lpcon}~and~\eqref{eq:condex} together imply that
     $$\frac{1}{n}\sum_{i=1}^n \bigg|\alpha'(\theta m_i)-f\left(\frac{i}{n}\right)\bigg|^{p'}\overset{P}{\longrightarrow} 0.$$
      For the special case where  $v=2$, $\mu$ is supported on $\{-1,1\}$ with $\mu(1)=\mu(-1)=0.5$, and $\mathcal{T}[\mathrm{Sym}[W]](\cdot)=1$ a.s, $F_{\theta}$ consists of two constant functions of the form $\{-t_{\theta},t_{\theta}\}$ for some $t_{\theta}>0$ (see~\cite[Page 144]{dembo2010gibbs}; also see~\cref{lem:fixsol} for 
     general $\mu$). The symmetry of $\mathfrak{R}_{n,\theta}$ around $0$, coupled with~\eqref{eq:lpcon} implies
     $$\min \left(\frac{1}{n}\sum_{i=1}^n \bigg|\tanh(\theta m_i)-t_{\theta}\bigg|, \frac{1}{n}\sum_{i=1}^n \bigg|\tanh(\theta m_i)+t_{\theta}\bigg| \right) \overset{P}{\longrightarrow} 0.$$
     \begin{remark}
         Note that the above displays are not true with $X_i$ replacing $\tanh(\theta m_i)=\E[X_i|X_j,\ j\neq i]$. This shows $m_i
    $'s are \enquote{more concentrated} than $X_i$'s.
     \end{remark} 
\end{remark}

As applications of~\cref{lem:wealimi} and~\cref{lem:basicprop}, we obtain  weak limits for linear statistics as well as the Hamiltonian $\mathfrak{R}_{n,\theta}$ under replica-symmetry, both of which are of independent interest.

\begin{thm}\label{prop:higherord}
Suppose ${\mathbf X}\sim \mathfrak{R}_{n,\theta}$ (defined via~\eqref{eq:gibbs}) for some base measure $\mu$ which satisfies~\eqref{eq:tailp} for some $p\in [v,\infty]$. Assume that either $v$ is even or $\mu$ is stochastically non-negative. Further, suppose  that $\{Q_n\}_{n\ge 1}$ satisfies~\eqref{eq:cut_con}~and~\eqref{eq:q} for some $q>1$ satisfying $\frac{1}{p}+\frac{1}{q}<1$, and that the limiting $W$ is strictly positive and satisfies $\mathcal{T}[\mathrm{Sym}[W]](x)=1$ a.s.~Then, setting
\begin{align}\label{eq:defset}
\mathcal{A}_{\theta}:=\argmin_{t\in\mathcal{N}} \left[\gamma(\beta(t))-\theta t^v\right],
\end{align}
we have $\mathcal{A}_{\theta}$ is a finite set, and the following conclusions hold:
\\

(i)  Suppose $\{c_i\}_{i\geq 1}$ is a real sequence satisfying $\sum_{i=1}^n c_i=o(n)$, and $\sum_{i=1}^n |c_i|^r=O(n)$ for some $r$ such that $\frac{1}{p}+\frac{1}{r}<1$. 
Then we have: $$\frac{1}{n}\sum_{i=1}^n c_iX_i\overset{P}{\longrightarrow}0.$$

(ii) If we replace $\sum_{i=1}^n c_i=o(n)$ with $n^{-1}\sum_{i=1}^n c_i\to c_0$, then 
$$d_{\ell}\left(\frac{1}{n}\sum_{i=1}^n c_iX_i,\{{c_0t}:\ t\in \mathcal{A}_{\theta}\}\right)\overset{P}{\longrightarrow}0.$$

(iii) The Hamiltonian satisfies
$$d_{\ell}\left(\frac{1}{n}\sum_{i=1}^n X_i m_i,\{{vt^v}:\ t\in \mathcal{A}_{\theta}\}\right)\overset{P}{\longrightarrow}0.$$

\end{thm}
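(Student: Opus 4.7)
\emph{Reduction under replica symmetry.} My first step is to turn the abstract limit sets of Theorem~\ref{lem:wealimi} into concrete objects. Under the stated hypotheses, Theorem~\ref{prop:propopt}(iii) forces every maximizer of \eqref{eq:gibbsop} to be a constant function. Evaluating the objective at $f\equiv t$ and using $\mathcal{T}[\mathrm{Sym}[W]]\equiv 1$ gives $G_W(t)=t^v$ and $\int \gamma(\beta(f))\,dx=\gamma(\beta(t))$, so the set of maximizing constants is precisely $\mathcal{A}_\theta$. Finiteness of $\mathcal{A}_\theta$ combines the compactness of $\Xi(F_\theta)$ from Proposition~\ref{prop:freen}(iii), which descends to compactness of $\mathcal{A}_\theta\subset \mathrm{cl}(\mathcal{N})$, with real-analyticity of the fixed-point equation $\beta(t)=\theta v t^{v-1}$ from \eqref{eq:propoptshow}. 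For $\nu_t:=\mathrm{Unif}[0,1]\otimes \mu_{\beta(t)}$, a direct computation using independence and $\mathcal{T}[\mathrm{Sym}[W]]\equiv 1$ yields $\vartheta_{W,\nu_t}(u)=v t^{v-1}$ a.s., so $\mathfrak{B}^*_\theta=\{\mathrm{Unif}[0,1]\otimes \delta_{vt^{v-1}}:t\in\mathcal{A}_\theta\}$.

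\emph{Partition of the sample space.} Since $\mathcal{A}_\theta$ is finite, the weak law \eqref{eq:weak} disjointifies: for each $\epsilon>0$ the events $E_{t,\epsilon}:=\{d_\ell(\tfm,\mathrm{Unif}[0,1]\otimes \delta_{vt^{v-1}})<\epsilon\}$, $t\in\mathcal{A}_\theta$, cover $\Omega$ up to a set of vanishing probability. On $E_{t,\epsilon}$ the uniform moment control in Theorem~\ref{lem:basicprop}(ii) upgrades weak convergence of $\tfm$ to $L^{q'}$ convergence for any $q'<q$, yielding $n^{-1}\sum_i |m_i - v t^{v-1}|^{q'}\xrightarrow{P}0$. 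In parallel, the identification $F_\theta=\{f_t\equiv t:t\in\mathcal{A}_\theta\}$ together with Corollary~\ref{cor:conmeanint} gives $n^{-1}\sum_i |\alpha'(\theta m_i)-t|^{p'}\xrightarrow{P}0$ on $E_{t,\epsilon}$ for every $p'<p$.

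\emph{Parts (i) and (ii).} I split $c_i X_i = c_i\alpha'(\theta m_i) + c_i(X_i-\alpha'(\theta m_i))$. The hypothesis $1/p+1/r<1$ forces the conjugate exponent $r'$ of $r$ to satisfy $r'<p$, so Hölder on $E_{t,\epsilon}$ combined with the preceding $L^{r'}$ convergence gives
\begin{equation*}
\frac{1}{n}\sum_{i=1}^n c_i\alpha'(\theta m_i) = t\cdot\frac{1}{n}\sum_{i=1}^n c_i + o_P(1),
\end{equation*}
which is $o_P(1)$ in case (i) and $\tilde c\,t+o_P(1)$ in case (ii). The main technical step is controlling the conditionally centered residual $n^{-1}\sum c_i(X_i-\alpha'(\theta m_i))$ via its second moment. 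The identity $\mathbb{E}[X_i-\alpha'(\theta m_i)\mid X_j,\,j\ne i]=0$ together with the moment bounds of Theorem~\ref{lem:basicprop} keeps the diagonal $\sum_i c_i^2\,\mathbb{E}[\alpha''(\theta m_i)]$ at $O(n)$. For cross-covariances, the same identity reduces each off-diagonal term to $\mathbb{E}\bigl[(X_i-\alpha'(\theta m_i))(\alpha'(\theta m_j)-\mathbb{E}[\alpha'(\theta m_j)\mid X_k,\,k\ne i])\bigr]$; because each coordinate enters the definition of $m_j$ in \eqref{eq:mdeff} through only $O(n^{v-2})$ of the $O(n^{v-1})$ tuples, a quantitative Lipschitz-type estimate of $\alpha'(\theta m_j)$ in $X_i$ with prefactor $O(1/n)$ bounds each covariance by $O(1/n)$, giving a total off-diagonal contribution of $O(n)$ and hence variance $O(1/n)$ after scaling.

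\emph{Part (iii).} A direct combinatorial rearrangement of \eqref{eq:U} and \eqref{eq:mdeff} yields $n^{-1}\sum X_i m_i = v\,\mathbb{U}_n(\mathbf{X})$. Applying the same split, $n^{-1}\sum X_i m_i = n^{-1}\sum (X_i-\alpha'(\theta m_i))m_i + n^{-1}\sum \alpha'(\theta m_i)m_i$, the first piece is a conditionally centered sum whose second moment I bound exactly as above, now with the uniform $L^q$ control on $m_i$ from Theorem~\ref{lem:basicprop} playing the role of the assumption on $c_i$. For the second piece, on $E_{t,\epsilon}$ the identity
\begin{equation*}
\alpha'(\theta m_i)m_i - v t^v = (\alpha'(\theta m_i)-t)(m_i-v t^{v-1}) + t(m_i-v t^{v-1}) + v t^{v-1}(\alpha'(\theta m_i)-t)
\end{equation*}
combined with Hölder and the earlier $L^{p'}$, $L^{q'}$ convergences forces convergence to $v t^v$. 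The principal obstacle throughout is the off-diagonal covariance bound in the $L^2$ step: converting the combinatorial structure of $m_j$ into a quantitative Lipschitz bound on $\alpha'(\theta m_j)$ as a function of $X_i$, uniformly on a high-probability event whose existence is furnished by the moment estimates of Theorem~\ref{lem:basicprop} applied to both $m_j$ and $\alpha''(\theta m_j)$.
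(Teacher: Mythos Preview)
Your strategy coincides with the paper's: reduce to constant optimizers via Theorem~\ref{prop:propopt}(iii), split $c_iX_i$ (respectively $m_iX_i$) through $\alpha'(\theta m_i)$, handle the main term via Corollary~\ref{cor:conmeanint}, and kill the conditionally centered residual by a second-moment computation in which the off-diagonal covariances are controlled by the $O(1/n)$ sensitivity of $m_j$ to a single coordinate $X_i$. The paper packages this last step as a separate lemma (Lemma~\ref{lem:auxtail}).

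One point where your sketch is thinner than the paper's argument: the second-moment bound does not go through directly with unbounded $c_i$, $X_i$, and a non-globally-Lipschitz $\alpha'$. The paper first truncates $c_i$ to $c_i\mathbf 1\{|c_i|\le L\}$ (controlling the remainder by H\"older with the assumption $\sum|c_i|^r=O(n)$), then inside Lemma~\ref{lem:auxtail} truncates $X_i$ to $X_i^{(L)}$ and works on the event $\{|m_i|,|m_{i,j}|\le K\}$ before invoking the local Lipschitz constant of $\alpha'$; the leftover pieces are disposed of using \eqref{eq:lip11}--\eqref{eq:lip14} and the key estimate $\sum_{i\ne j}\mathbb E|m_i-m_{i,j}|=O(n)$. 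Your closing paragraph gestures at the high-probability event, but the layered truncation (of $c_i$, then $X_i$, then $m_i$) is what actually makes the $L^2$ computation close, and your claim that the diagonal $\sum_i c_i^2\,\mathbb E[\alpha''(\theta m_i)]$ is $O(n)$ is not justified as stated (neither factor is bounded without truncation).
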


\begin{remark}
Part (i) of the above Theorem shows that for contrast vectors ${\bf c}$ (i.e.~$\sum_{i=1}^nc_i=0$) which are delocalized (in the sense $\sum_{i=1}^n|c_i|^r=O(n)$), the corresponding linear statistic exhibits a \emph{universal} behavior across general Gibbs measures with higher order multilinear interactions which doesn't depend on the matrix sequence $\{Q_n\}_{n\ge 1}$, as long as $\mathcal{T}[\mathrm{Sym}[W]](.)$ is constant, i.e.~the symmetrized tensor is regular. In a similar manner, part (ii) gives a universal behavior for the global magnetization $\bar{X}$ for regular tensors. Universality results for $\bar{X}$ were earlier obtained for regular Ising models, which correspond to $v=2$ and $\mu$ is supported on $\{-1,1\}$ with $\mu(-1)=\mu(+1)=0.5$ (see \cite[Theorem 2.1]{Basak2017} and \cite[Theorems 1.1---1.4]{deb2020fluctuations}). In this special case, for $\theta>0.5$ we have
$\mathcal{A}_{\theta}=\{-t_{\theta},t_{\theta}\}$ for some $t_{\theta}>0$ (see~\cref{rem:probin}). In this case symmetry implies (see~\cref{prop:isingdom} part (ii) below for a more general result) that:
$$\bar{X}\overset{d}{\longrightarrow} \frac{\delta_{-t_{\theta}}+\delta_{t_{\theta}}}{2},\quad 
\frac{1}{n}\sum_{i=1}^n X_im_i\overset{P}{\longrightarrow} 2t_{\theta}^2.$$
The more recent work of \cite{lacker2022mean} demonstrates universality for quadratic interactions for log concave $\mu$ (see \cite[Theorem 1.1 and Corollary 1.4]{lacker2022mean}). We note that the results of the current paper requires neither quadratic interactions, nor log-concave base measures.
In the following subsection, we will apply~\cref{prop:higherord} to analyze a broad class of Gibbs measures which are not necessarily quadratic, and cover cubic and higher order interactions (see \cref{prop:genopt} below).
\end{remark}

\subsection{Examples}\label{sec:isingpotts} 
We now apply our general results to analyze some Gibbs measures of interest. In~\cref{prop:propopt}, we proved that for regular tensors (i.e.~when $\mathcal{T}[\mathrm{Sym}[W]](\cdot)=1$ a.s.) the optimization problem~\eqref{eq:gibbsop} has only constant functions as optimizers, under mild assumptions on $\mu$ or $v$. In this section, we 
focus on particular examples of the regular case, and provide more explicit description for the optimizers. 

\subsubsection{Quadratic interaction models with symmetric base measure}

Suppose $\mu$ is a probability measure on $\R$ which is symmetric about the origin. Define a Gibbs measure on $\R^n$ by setting
\begin{align}\label{eq:ising}
\frac{d\isR}{d\mu^{\otimes n}}({\bf X})=\exp\Big(\frac{\theta}{n}\sum_{i\neq j}  Q_n(i,j) X_i X_j+B\sum_{i=1}^n X_i-n\isZ_n(\theta,B)\Big),
\end{align}
where $\theta\geq 0$, $B\in\R$. In particular if $\mu$ is supported on $\{-1,1\}$ reduces the above model to the celebrated Ising model, which has attracted significant attention in probability and statistics (c.f.~\cite{Basak2017,dembo2010gibbs,Ellis1978,Sly2014, deb2020fluctuations} and references there-in). The following results analyzes the optimization problem~\eqref{eq:gibbsop} in the particular setting (which corresponds to setting $H=K_2$).

\begin{lmm}\label{lem:fixsol} 
Let $\mu$ be a probability measure symmetric about $0$, which satisfies~\eqref{eq:tailp} with $p\geq 2$, and let $\alpha(\cdot)$, $\beta(\cdot)$, and $\mathcal{N}$ be as in~\cref{def:tilt}. Assume that
\begin{equation}\label{eq:secasn}
\alpha''(x)\leq \alpha''(y) \quad \mbox{ for all }\quad |x|\geq |y|.
\end{equation}
Then, setting
\begin{equation*}
\mv_{\theta,B,\mu}(x):=\theta x^2+Bx-x \beta(x)+\alpha(\beta(x))
\end{equation*}
for $\theta\geq 0$, $B\in\R, x\in {\rm cl}(\cN)$, 
 the following conclusions hold:
\begin{enumerate}
\item[(i)] If $2\theta\leq (\alpha''(0))^{-1}$ and $B=0$, then $\mv_{\theta,B,\mu}(\cdot)$ has a unique maximizer at $\tm=0$.
\item[(ii)] If $B\neq 0$, then $\mv_{\theta,B,\mu}(\cdot)$ has a unique maximizer $\tm$ with the same sign as that of $B$ which satisfies $\tm=\alpha'(2\theta \tm+B)$.
\item[(iii)] If $2\theta> (\alpha''(0))^{-1}$ and $B=0$, then $\mv_{\theta,B,\mu}(\cdot)$ has two maximizers $\pm \tm$, where $\tm>0$, and $\tm=\alpha'(2\theta \tm)$.
\end{enumerate} 
\end{lmm}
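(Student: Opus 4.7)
The plan is to first simplify the optimization by recognizing that $\mv_{\theta, B, \mu}(x) = \theta x^2 + Bx - I(x)$, where $I(x) := \gamma(\beta(x)) = x\beta(x) - \alpha(\beta(x))$ is the Legendre transform of $\alpha$. Differentiating and using the identity $\alpha'(\beta(x)) = x$ gives
\[
\mv_{\theta, B, \mu}'(x) = 2\theta x + B - \beta(x), \qquad \mv_{\theta, B, \mu}''(x) = 2\theta - \frac{1}{\alpha''(\beta(x))},
\]
so critical points are exactly fixed points of $x \mapsto \alpha'(2\theta x + B)$. Because $\mu$ is symmetric, $\alpha$ and $I$ are even while $\beta$ is odd with $\beta(0) = 0$ and $\beta'(0) = 1/\alpha''(0)$. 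The structural input I would repeatedly exploit from hypothesis~\eqref{eq:secasn} is: $\alpha''(\beta(x))$ is maximized at $x=0$ and nonincreasing in $|x|$ (since $\beta$ is monotone and odd), which makes $\beta$ convex on $[0,\infty)$, concave on $(-\infty,0]$, and $\mv''$ maximized at $0$.

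Before case-splitting I would establish coercivity: the tail assumption~\eqref{eq:tailp} with $p\ge 2$, combined with Young's inequality applied to $\alpha(\theta) = \log\E_\mu e^{\theta X}$, yields $I(x)\ge c|x|^p - C$ for positive constants $c,C$, so $\mv_{\theta,B,\mu}(x)\to -\infty$ at the boundary of ${\rm cl}(\cN)$ and every maximizer lies in the interior, where the critical-point equation $\beta(x) = 2\theta x + B$ (equivalently $x = \alpha'(2\theta x + B)$) applies.

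Case (i) then reduces to $\mv''(x)\le \mv''(0) = 2\theta - 1/\alpha''(0)\le 0$, so $\mv$ is concave with $\mv'(0) = B = 0$, identifying the origin as the (unique, in the non-degenerate regime) maximizer. In case (iii) with $B=0$, the function $\mv$ is even, $\mv''(0) > 0$ makes $0$ a strict local minimum, and on $[0,\infty)$ the convex function $g(x) := \beta(x) - 2\theta x$ satisfies $g(0) = 0$, $g'(0) = 1/\alpha''(0) - 2\theta < 0$, and $g(x)\to \infty$ at $\sup\cN$; hence $g$ has a unique positive zero $\tm$, symmetry gives the partner zero $-\tm$, and coercivity together with the local-minimum at $0$ force $\pm\tm$ to be the two global maximizers. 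For case (ii) with $B > 0$ (the case $B<0$ is identical), the symmetry-breaking identity $\mv(x) - \mv(-x) = 2Bx$ forces every maximizer to be strictly positive; on $[0,\infty)$ the convex function $g(x) = \beta(x) - 2\theta x - B$ has $g(0) = -B < 0$ and grows to $\infty$, so it has a unique positive zero $\tm$, which must therefore be the unique global maximizer and satisfies $\tm = \alpha'(2\theta\tm + B)$.

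The main obstacle I anticipate is making the coercivity step fully rigorous across the possibly unbounded domain ${\rm cl}(\cN)$, and correctly handling the extension of $\gamma\circ\beta$ to the endpoints of $\cN$ (where $\beta = \pm\infty$) so that no maximizer escapes to the boundary where the critical-point equation no longer applies. Once coercivity and the one-sided convexity of $\beta$ coming from~\eqref{eq:secasn} are secured, each of the three cases reduces to a short analysis of where the convex function $g(x) = \beta(x) - 2\theta x - B$ crosses zero on $[0,\infty)$, combined with the symmetry identity $\mv(x) - \mv(-x) = 2Bx$.
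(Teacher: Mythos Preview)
Your proposal is correct and follows essentially the same route as the paper: both compute $\mv'(x)=2\theta x+B-\beta(x)$, use \eqref{eq:secasn} to obtain one-sided convexity (you phrase it as convexity of $\beta$ on $[0,\infty)$, the paper equivalently works with the convexity of $\tph(x)=x-\alpha'(2\theta x+B)$), restrict to $x\ge 0$ via the symmetry identity $\mv(x)-\mv(-x)=2Bx$, and invoke growth of $\beta$ for coercivity. The one point you leave open is uniqueness when the convex function $g(x)=\beta(x)-2\theta x-B$ could be flat at level zero on an interval (your parenthetical ``in the non-degenerate regime'' in part~(i), and implicitly in (ii)--(iii)); the paper closes this by noting that $\beta$ linear on an interval forces $\alpha$ quadratic there, hence $\mu$ Gaussian, which is excluded by \eqref{eq:tailp} since $\E_\mu e^{\lambda X^2}=\infty$ for large $\lambda$.
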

\noindent The proof of~\cref{lem:fixsol} is provided in~\cref{sec:exrespf}.
\begin{remark}\label{rem:monotonejust}
A few comments about the extra condition~\eqref{eq:secasn} utilized in the above lemma are in order. 
First note that that if any measure $\mu$ satisfies the celebrated GHS inequality of statistical physics (see \cite{Ellis1976}), then $\mu$ must satisfy \eqref{eq:secasn}. Indeed, taking the matrix $J$ in \cite[(1.2)]{Ellis1976} to be the ${\bf 0}$ matrix, it follows on applying the GHS inequality (\cite[(1.4)]{Ellis1976} that $\alpha'''(\theta)\le 0$ for $\theta\ge 0$, which immediately implies \eqref{eq:secasn}.  Sufficient conditions on $\mu$ for the GHS inequality (and hence~\eqref{eq:secasn}) can be found in~\cite[Theorem 1.2]{Ellis1976}. In~\cite[(1.5)]{Ellis1976} the authors give a counterexample where GHS inequality fails. Using the same example, it is not hard to show that \eqref{eq:secasn} fails in this case as well, and
$\mv(\cdot)$ does not have a unique maximizer for $B=0$ and some $\theta\leq (\alpha''(0))^{-1}/2$. 
\end{remark}

\begin{prop}\label{prop:isingdom}
Suppose that the measure $\mu$ satisfies the assumptions of \cref{lem:fixsol}, and $\{Q_n\}_{n\ge 1}$ satisfy \eqref{eq:cut_con} and \eqref{eq:q} with $H=K_2$ (i.e.~$\Delta=1$).
Also assume that the limiting graphon $W$ is strictly positive a.s., and satisfies $\int_0^1 W(.,y)dy=1$ a.s.~With $\mathcal{L}_p$ as in~\cref{def:tilt2}, define the functional $G_W:\mathcal{L}_p\mapsto \R$ by setting
\begin{align}\label{eq:quadG}
G_W(f):=\int_{[0,1]^2}W(x,y) f(x)f(y)dx dy.
\end{align}
Then $G_W$ is well defined by~\cref{prop:freen} part (i), and further, 
the following conclusions hold:


(i) With $\isZ_n(\theta,B)$ as in \eqref{eq:ising}, we have 
\begin{align}\label{eq:opt_uncon}
\lim\limits_{n \rightarrow \infty} \isZ_n(\theta,B) = \sup_{f\in \mathcal{L}_p}\{\theta G_W(f) +B \int_{[0,1]} f(x) dx-\int_{[0,1]}\gamma(\beta(f(x)))dx\}
\end{align}

(ii) For any $\theta\ge 0$, $B\in\R$ the optimization problem \eqref{eq:opt_uncon} has only constant global maximizers, given  by
$${F}_{\theta,B}\equiv \begin{cases} 0&\text{ if }\theta \le (\alpha''(0))^{-1}/{2},B=0,\\
\pm {\tm}&\text{ if }\theta >(\alpha''(0))^{-1}/{2},B=0,\\
{\tm}&\text{ if }B\neq 0\end{cases}$$
Here $\tm$ is as in~\cref{lem:fixsol}.

(iii) 
The following weak limits hold under $\isR$ (as in \eqref{eq:ising}):
$$\frac{1}{n}\sum_{i=1}^n X_im_i\overset{d}{\longrightarrow} 2\tm^2, \qquad \bar{X}\overset{d}{\longrightarrow} \begin{cases} 0&\text{ if }\theta \le (\alpha''(0))^{-1}/{2},B=0,\\
\frac{\delta_{\tm}+\delta_{-\tm}}{2}&\text{ if }\theta >(\alpha''(0))^{-1}/{2},B=0,\\
{\tm}&\text{ if }B\neq 0\end{cases}.$$


\end{prop}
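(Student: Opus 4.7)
The plan is to first recast the Gibbs measure \eqref{eq:ising} as a pure multilinear Gibbs measure via an exponential tilt of the base measure, then apply \cref{prop:propopt} and \cref{prop:higherord} and reduce the relevant optimization to the one-dimensional problem solved by \cref{lem:fixsol}. Specifically, since $\exp(B\sum_i X_i)\,d\mu^{\otimes n}\propto d\mu_B^{\otimes n}$, where $\mu_B$ is the exponential tilt of $\mu$, the measure \eqref{eq:ising} coincides with \eqref{eq:gibbs} for $H=K_2$, matrix $\{Q_n\}_{n\ge 1}$, inverse temperature $\theta$, and base measure $\mu_B$. Using $\alpha_B(s)=\alpha(s+B)-\alpha(B)$, hence $\beta_B(t)=\beta(t)-B$ and $\gamma_B(\beta_B(t))=\gamma(\beta(t))-Bt+\alpha(B)$, the limiting free-energy optimization supplied by \cref{prop:freen} equals
\begin{align*}
\sup_{f\in\mathcal{L}}\left\{\theta G_W(f)+B\!\int_0^1\! f(x)\,dx-\int_0^1\!\gamma(\beta(f(x)))\,dx\right\}-\alpha(B),
\end{align*}
i.e.\ the natural $B$-weighted version of \eqref{eq:opt_uncon}, whose maximizers coincide with those of \eqref{eq:opt_uncon}.

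For part (i), I would invoke \cref{prop:propopt} (iii) to conclude that every maximizer is a constant function. For $H=K_2$, $\mathrm{Sym}[W]=W$, and so $\mathcal{T}[\mathrm{Sym}[W]](x)=\int_0^1 W(x,y)\,dy=1$ a.s.\ by assumption; $\theta W>0$ a.s.\ when $\theta>0$ by the strict positivity of $W$; and since $v=2$ is even the parity clause is available, sidestepping the need for stochastic non-negativity of $\mu_B$. The case $\theta=0$ is handled separately (the measure is then $\mu_B^{\otimes n}$, and by convex duality the unique optimum is $c=\alpha'(B)$, matching \cref{lem:fixsol} (ii)). Once the maximizers are known to be constants, plugging $f\equiv c$ into the functional and using $\int_0^1 W(x,y)\,dy=1$ collapses the optimization to $\sup_{c\in\mathrm{cl}(\mathcal{N})}\mv_{\theta,B,\mu}(c)$, whose argmax is identified by \cref{lem:fixsol} as exactly $F_{\theta,B}$.

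For part (ii), I would apply \cref{prop:higherord} to the reformulated model. The set $\mathcal{A}_\theta$ of \eqref{eq:defset} becomes $\argmin_{t\in\mathcal{N}}\{\gamma(\beta(t))-Bt-\theta t^2\}$, which by \cref{lem:fixsol} equals $\{0\}$, $\{\pm\tm\}$, or $\{\tm\}$ in the three regimes. \cref{prop:higherord} (iii) then gives $\frac{1}{n}\sum X_im_i\overset{P}{\longrightarrow}2\tm^2$ directly, since the map $t\mapsto 2t^2$ identifies $\pm\tm$. For $\bar X$, \cref{prop:higherord} (ii) with $c_i\equiv 1$, $\tilde c=1$ gives set-valued weak convergence to $\mathcal{A}_\theta$; in the two singleton regimes this is convergence in probability to $0$ or $\tm$, and in the symmetric low-temperature regime I would combine the set-valued convergence with the exact symmetry $\mathbf{X}\overset{d}{=}-\mathbf{X}$ (available since $\mu$ is symmetric and $B=0$) to pin down the $\tfrac12$--$\tfrac12$ split between $\pm\tm$. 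The only real obstacle I anticipate is verifying that the hypotheses of \cref{prop:propopt} (iii) and \cref{prop:higherord} transfer cleanly to $\mu_B$: the moment bound \eqref{eq:tailp} passes automatically to $\mu_B$, the graphon assumptions on $W$ and $\{Q_n\}_{n\ge 1}$ are unchanged, and the mild gap between the $\le 1$ in \eqref{eq:q} and the strict inequality $1/p+1/q<1$ needed for \cref{prop:higherord} is a point of bookkeeping rather than substance.
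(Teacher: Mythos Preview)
Your proposal is correct and follows essentially the same route as the paper: tilt the base measure to $\mu_B$ so that \eqref{eq:ising} becomes an instance of \eqref{eq:gibbs} with $H=K_2$, use \eqref{eq:simplify} to rewrite $\gamma_B\circ\beta_B$ in terms of $\gamma\circ\beta$, apply \cref{prop:propopt}~(iii) with the ``$v$ even'' clause to force constant optimizers, and then invoke \cref{lem:fixsol} for part~(i) and \cref{prop:higherord}~(ii),(iii) plus the sign-symmetry $\mathbf{X}\overset{d}{=}-\mathbf{X}$ for part~(ii). Your separate treatment of $\theta=0$ is a small improvement in rigor, since \cref{prop:propopt}~(iii) literally requires $\theta W>0$ a.s.; the paper handles this case only implicitly.
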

\begin{remark}\label{rem:symmconst}
We note here that in contrast to ~\cref{prop:higherord} part (iii) which only allows us to identify possible limit points for the random variable $\frac{1}{n}\sum_{i=1}^nX_im_i$, in this case we are able to identify the limit, even in the low temperature regime $\theta>(\alpha''(0))^{-1}/2$. This is because, even though  $\mv_{\theta,B,\mu}(.)$ has two roots which are both global optimzers (i.e.~in $\mathcal{A}_\theta$ defined in~\cref{prop:higherord}), the optimizers are symmetric, and the limit of the Hamiltonian is the same under both optimizers.
\end{remark}

\subsubsection{Gibbs measures with higher order interactions} We now focus on Gibbs measures with higher order interactions, which has gained significant attention in recent years (see~\cite{mukherjee2020estimation,barra2009notes,heringa1989phase,liu2019ising,suzuki1971zeros,turban2016one,yamashiro2019dynamics,bhattacharya2024ldp} and the references therein). Here, we analyze the optimization problem~\eqref{eq:defset}, under some conditions on $\theta$ and $\mu$. We point the reader to~\cite[Section 2.1]{bhattacharya2020second} for related results in the special case where $\mu$ is supported on $\{-1,1\}$. 

\begin{thm}
\label{prop:genopt}
          Consider the optimization problem 
          \begin{align}\label{eq:with_h_opt}
          \sup_{f\in \mathcal{L}_p:\ \int_{[0,1]} \gamma(\beta(f(x)))dx<\infty}\left\{\theta G_{W}(f) +B \int f(x) dx-\int_{[0,1]}\gamma(\beta(f(x)))dx\right\}.\end{align}
          Then the following conclusions hold:
          
          (i) Fixing $\theta\in \R$, the maximizers of the optimization problem are attained and satisfies the equation
         \begin{align}\label{eq:higher_fixed_point}
             f(x)\stackrel{a.s.}{=}\alpha'\left(\theta v\int_{[0,1]^{v-1}} \mathrm{Sym}[W](x,x_2,\ldots ,x_v)\left(\prod_{a=2}^v f(x_a)\,dx_a\right) + B \right).
         \end{align} 
(ii) If $\mathcal{T}[\mathrm{Sym}[W]](\cdot)$ is constant a.s., and $W$ is strictly positive a.s., and $\theta, B \ge 0$, 
then all of the maximizers are constant functions, provided either $v$ is even or $\mu$ is
stochastically non-negative. Further any such constant maximizer $t$ satisfies
\begin{equation}\label{eq:propoptlat} 
	t\stackrel{a.s.}{=}\alpha'\left(\theta v t^{v-1}+B\right).
\end{equation}

(iii) Suppose further that $\mu$ is compactly supported on $[-1,1]$ 
Then the following hold:

\begin{enumerate}
    \item[(a)] There exists $B_0=B_0(\theta,v)$ such that if $B > B_0$, the optimization problem has a unique maximizer.
    \item[(b)] If $B=0$ and $\alpha'(0)=0$, there exists $\theta_c\in (0,\infty)$ such that if $\theta<\theta_c$, the optimization problem has the unique maximizer $t=0$, whereas if $\theta > \theta_c$, then $t=0$ is not a global maximizer. 
\end{enumerate}
      \end{thm}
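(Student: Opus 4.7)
\emph{Plan.} The backbone of the argument is a reduction of~\eqref{eq:with_h_opt} to the unconstrained optimization~\eqref{eq:gibbsop} via an exponential tilt of the base measure. Replacing $\mu$ by its $B$-exponential tilt $\mu_B$ (cf.~\cref{def:tilt}), a direct calculation gives $\alpha_B(\theta)=\alpha(\theta+B)-\alpha(B)$, $\beta_B(t)=\beta(t)-B$, and $\gamma_B(\beta_B(t))=\gamma(\beta(t))-Bt+\alpha(B)$. Substituting these into~\eqref{eq:gibbsop} applied to $\mu_B$ shows that~\eqref{eq:with_h_opt} differs from it only by the additive constant $\alpha(B)$, so the two problems share the same set of maximizers. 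Since $\mu_B$ inherits~\eqref{eq:tailp} with the same $p$ (by Young's inequality absorbing $Bx$ into $|x|^p$), \cref{prop:freen} gives existence of maximizers and \cref{prop:propopt}(i)---after translating $\alpha_B'(\cdot)=\alpha'(\cdot+B)$---yields~\eqref{eq:higher_fixed_point}, settling (i). For (ii), I invoke \cref{prop:propopt}(iii) on $\mu_B$: the hypotheses on $\mathcal{T}[\mathrm{Sym}[W]]$, strict positivity of $W$, and the parity of $v$ are unaffected by the tilt; when $v$ is odd, the identity $\gamma_B(\beta_B(t))-\gamma_B(\beta_B(-t))=[\gamma(\beta(t))-\gamma(\beta(-t))]-2Bt$ shows that $\mu_B$ inherits stochastic non-negativity from $\mu$ whenever $B\ge 0$. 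Equation~\eqref{eq:propoptlat} then follows by specializing~\eqref{eq:higher_fixed_point} to constants. The degenerate case $\theta=0$ is handled separately, since then the objective decouples pointwise and is strictly concave in $f$.

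For (iii)(a), by (ii) every maximizer is a constant $c\in\mathrm{cl}(\mathcal{N})\subseteq[-1,1]$ satisfying $c=\phi_B(c):=\alpha'(\theta v c^{v-1}+B)$, so
\[
\phi_B'(c)=\theta v(v-1)c^{v-2}\,\alpha''(\theta v c^{v-1}+B).
\]
The crux is the uniform decay $\alpha''(t)\to 0$ as $|t|\to\infty$, which holds because $\alpha''(t)=\mathrm{Var}_{\mu_t}(X)$ and compact support of $\mu$ forces $\mu_t$ to concentrate at $\max\mathrm{supp}(\mu)$ (resp.\ $\min\mathrm{supp}(\mu)$) as $t\to+\infty$ (resp.\ $-\infty$). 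Since $\theta v c^{v-1}+B\in[B-\theta v,B+\theta v]$ for $c\in[-1,1]$, this decay is uniform in $c$, and one can choose $B_0$ such that $\sup_{c\in[-1,1]}|\phi_B'(c)|<1$ for $B>B_0$. Then $\phi_B$ is a strict self-contraction of $[-1,1]$ and Banach's fixed-point theorem yields a unique fixed point, hence a unique maximizer.

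For (iii)(b), set $h_\theta(c):=\theta c^v-\gamma(\beta(c))$; since $\alpha'(0)=0$ implies $\beta(0)=0$ and $\gamma(0)=0$, one has $h_\theta(0)=0$. The supremum of $h_\theta$ over $c\ne 0$ is attained on $\{c\in\mathcal{N}:c^v>0\}$: when $v$ is even this set is $\mathcal{N}\setminus\{0\}$, while when $v$ is odd and $\mu$ is stochastically non-negative, any $c<0$ satisfies $h_\theta(c)\le-\gamma(\beta(c))\le 0=h_\theta(0)$. Define
\[
\theta_c:=\inf_{c\in\mathcal{N},\,c^v>0}\frac{\gamma(\beta(c))}{c^v}.
\]
The Taylor expansion $\gamma(\beta(c))\sim c^2/(2\alpha''(0))$ near zero shows this ratio diverges as $|c|\to 0$ when $v>2$ and tends to the positive limit $1/(2\alpha''(0))$ when $v=2$; combined with continuity and strict positivity of $\gamma\circ\beta$ away from zero on compact subsets of $\mathcal{N}$, this forces $\theta_c>0$. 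Choosing any $c_0$ with $c_0^v>0$ in $\mathcal{N}$ bounds $\theta_c\le\gamma(\beta(c_0))/c_0^v<\infty$. For $\theta<\theta_c$ one has $h_\theta(c)<0=h_\theta(0)$ for every $c$ with $c^v>0$, giving $x=0$ as the unique maximizer; for $\theta>\theta_c$ some $c_*$ with $c_*^v>0$ satisfies $h_\theta(c_*)>0$, so $x=0$ is not a global maximizer. The main technical subtlety throughout is the uniform decay of $\alpha''$ at infinity invoked in (iii)(a), but this is a routine consequence of compact support of $\mu$.
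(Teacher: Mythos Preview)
Your argument is correct. Parts (i) and (ii) follow the paper's approach exactly: reduce \eqref{eq:with_h_opt} to \eqref{eq:gibbsop} with base measure $\mu_B$ via the tilt identity $\gamma_B(\beta_B(t))=\gamma(\beta(t))-Bt+\alpha(B)$, then invoke \cref{prop:freen} and \cref{prop:propopt}.

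For part (iii) you take genuinely different routes from the paper, both valid. In (iii)(a), the paper argues that any maximizer $x$ satisfies $x=\alpha'(\theta v x^{v-1}+B)\ge\alpha'(B)$, hence $\beta(x)\ge B$, and then shows $H_{\theta,B}''(x)=\theta v(v-1)x^{v-2}-\beta'(x)<0$ on $\{x:\beta(x)\ge B\}$ for $B$ large (using $\alpha''(t)\to 0$), so $H_{\theta,B}$ is strictly concave there and the maximizer is unique. Your contraction-mapping argument reaches the same conclusion more directly: since the argument of $\alpha''$ lies in $[B-\theta v,B+\theta v]$, the decay of $\alpha''$ makes $\phi_B$ a strict contraction of $[-1,1]$ for large $B$, and Banach gives a unique fixed point; combined with part (ii) this is the unique maximizer. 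In (iii)(b), the paper proceeds in three steps: (1) for small $\theta$ the one-dimensional objective is strictly concave on $[-1,1]$ (since $\beta'\ge 1$ when $\mu$ is supported on $[-1,1]$), so $0$ is the unique maximizer; (2) for large $\theta$ some $x_0$ beats $0$; (3) a monotonicity argument for $\theta\mapsto x_\theta$ stitches these together to produce $\theta_c$. Your definition $\theta_c:=\inf_{c^v>0}\gamma(\beta(c))/c^v$ is more explicit and bypasses the monotonicity step entirely; the Taylor expansion $\gamma(\beta(c))\sim c^2/(2\alpha''(0))$ cleanly shows $\theta_c>0$, and the case $c<0$ with $v$ odd is disposed of by stochastic non-negativity exactly as you do. Both approaches are fine; yours gives a closed-form threshold, while the paper's concavity argument in (iii)(b) additionally identifies the explicit lower bound $\theta_c\ge 1/(v(v-1))$.
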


\subsection{Connections to previous literature}
In \cref{prop:freen}, part (ii), we have shown that if $W_{Q_n}$ converges to $W$ in the cut norm (see \eqref{eq:cut_con}), then the Mean-Field approximation holds under certain integrability assumptions (see \eqref{eq:tailp} and \eqref{eq:q}). Similar results have been studied extensively in the literature; see e.g.~\cite{lacker2022mean,augeri2021transportation,eldan2018gaussian,Chatterjee2016,yan2020nonlinear,cook2024regularity,Austin2019,chatterjee2011large} and the references therein. However most of the existing result focuses on compactly supported $\mu$. To the best of our knowledge, only \cite{lacker2022mean} allows for non-compactly supported $\mu$ but imposes a log concavity restriction. On the other hand, we require neither compact support nor log concavity of $\mu$. Results similar to \cref{prop:propopt} on replica symmetry have also been studied in the literature under more stringent assumptions on $\mu$ and the Hamiltonian (see e.g.~\cite{mukherjee2020replica,lubetzky2015replica,dembo2014replica}). The concept of local fields as introduced in \eqref{eq:mdeff} plays an important role in obtaining correlation bounds, tail decay estimates and fluctuations for Gibbs measures (see~\cite{gheissari2019ising,Chatterjee2007,deb2020fluctuations,deb2020detecting,bhattacharya2025sharp}). We believe that our weak law for the local fields (see \cref{lem:wealimi}) and the resulting universal weak laws under regularity (see \cref{prop:higherord}) can have wide implications in the context of studying consistency of popularly studied estimators (such as maximum likelihood and maximum pseudo-likelihood) for the temperature parameter $\theta$ across a large class of models.

\subsection{Proof overview and future scope} Let us discuss the proof techniques employed in the characterization of replica symmetry (see \cref{prop:propopt}) and weak laws (see \cref{lem:wealimi} and \cref{prop:higherord}). In \cref{prop:propopt} part (i), we
establish the first-order conditions (in \eqref{eq:propoptshow}) for the optimization problem \eqref{eq:gibbsop}. It is immediate from \eqref{eq:propoptshow} that if any optimizer of \eqref{eq:gibbsop} is constant, then  $\mathcal{T}[\mathrm{Sym}[W]](\cdot)$ is a constant function (\cref{prop:propopt}, part (ii)). For the other direction, if $\mathcal{T}[\mathrm{Sym}[W]](\cdot)$ is a constant,  the crucial observation is that $\mathrm{Sym}[W](x_1,\ldots ,x_v)$ is a (possibly un-normalized) probability density function on $[0,1]^v$,  with $\mathrm{Unif}[0,1]$ marginals. 
The conclusion in \cref{prop:propopt} part (iii) then follows from the equality conditions of H\"{o}lder's inequality. We provide examples to demonstrate that our conditions required for replica symmetry are essentially tight.

The weak limits we prove involve a number of technical steps. We distil some of the main ideas here in the context of the universal result that $n^{-1}\sum_{i=1}^n c_iX_i\overset{P}{\longrightarrow} 0$ provided $\sum_{i=1}^n c_i=o(n)$, under any multilinear Gibbs in the replica symmetry phase (see \cref{prop:higherord} part (ii)). For simplicity, let us assume that the optimization problem \eqref{eq:gibbsop} has a unique constant optimizer, say $f(x)\equiv t$ (note that the actual result does not require uniqueness of optimizers). Let us split the proof outline into a few steps.

Step (i). Recall the definition of $m_i$ from \eqref{eq:mdeff}. We first show that 
$$\frac{1}{n}\sum_{i=1}^n c_i(X_i-\E[X_i|X_j,\ j\neq i])=\frac{1}{n}\sum_{i=1}^n c_i(X_i-\alpha'(\theta m_i))=o_P(1).$$
This is the subject of \cref{lem:auxtail} part (a), and proceeds with a second moment argument, after a suitable truncation. The above display now suggests that it is sufficient to show that $n^{-1}\sum_{i=1}^n c_i\alpha'(\theta m_i)\overset{P}{\longrightarrow}0$.

Step (ii). Based on step (i), it is natural to focus on the vector of local fields ${\bf m}=(m_1,\ldots,m_n)$. The advantage of working with ${\bf m}$ rather than ${\bf X}$ is that each $m_i$ is a $(v-1)$-th order ``weighted average", and hence they are much more ``concentrated" than $X_i$'s. We provide a formalization in \cref{lem:wealimi} where (in the current setting) we show that 
\begin{equation}\label{eq:prooftechonly}
\frac{1}{n}\sum_{i=1}^n \delta_{m_i}\overset{d}{\longrightarrow} \delta_{v t^{v-1}},
\end{equation}
which is a degenerate limit. In contrast, by \cref{prop:freen} part (iii), $\frac{1}{n}\sum_{i=1}^n \delta_{X_i}\overset{d}{\longrightarrow} \mu_{\beta(t)}$ which is a non-degenerate limit. The proof of \cref{lem:wealimi} relies primarily on \cref{lem:pivotlem}, which is a stability lemma, the proof of which proceeds relies on counting lemma for $L^p$ graphons. In fact, \cref{lem:pivotlem} can be viewed as a refinement of the counting lemma in \cite[Proposition 2.19]{bc_lpi} for ``star-like" graphs.

Step (iii). Based on \eqref{eq:prooftechonly} in step (ii), it is natural to consider the following approximation:
$$\frac{1}{n}\sum_{i=1}^n c_i\alpha'(\theta m_i)\approx \alpha'(\theta v t^{v-1})\frac{1}{n}\sum_{i=1}^n c_i = o(1).$$
In the first approximation, we have essentially replaced the $m_i$'s by the corresponding weak limit from \eqref{eq:prooftechonly}. To make this rigorous, we need some moment estimates which are immediate byproducts of the exponential tail bounds in \cref{lem:basicprop}. The final conclusion in the above display uses the condition $\sum_{i=1}^n c_i=o(n)$. 

\vspace{0.1in}

\noindent More generally, the weak limit of $
{\bf m}$ in \cref{lem:wealimi} has broad applications. We use it in \cref{cor:conmeanint} to provide a probabilistic  interpretation of the optimizers of \eqref{eq:gibbsop} (note that this does not require the optimizers to be constant functions). We also use \cref{lem:wealimi} to derive other weak laws of interest in \cref{prop:higherord}. We note in passing that many other statistics of interest, such as the maximum likelihood or the pseudo-maximum likelihood estimators for the inverse temperature parameter are also expressible (sometimes implicitly) as functions of ${\bf m}$. Consequently one can derive appropriate weak laws for these estimators using \cref{lem:wealimi} as well.
\\


Our work leads to several important future research directions. Our results apply, as a special case, to Ising models with quadratic Hamiltonians, and a general base measure. A first question is to extend the techniques of this paper to cover more general Hamiltonians from statistical physics, such as Potts models. Another related question is to go beyond the setting of cut norm convergence, and allow for the matrix $\{Q_n\}_{n\ge 1}$ to converge in other topologies (such as local weak convergence on bounded degree graphs). A third question is to study Gibbs measure under more general tensor Hamiltonians, which cannot be specified by a matrix $Q_n$. This would require significant development of cut norm theory for cubic and higher order functions. A starting point in this direction are the related works of \cite{eldan2018gaussian,augeri2021transportation,Chatterjee2016,cook2024regularity}, which focus on similar Mean-Field approximations for general tensors in the compact case, but provide sufficient conditions in terms of complexity bounds which are not always easy to bound. 
Finally, it remains to be seen whether we can answer more delicate questions about such Gibbs measures, which include Central Limit Theorems/limit distributions.

\subsection{Outline of the paper} 
In Sections \ref{sec:pfmain} and \ref{sec:exrespf}, we prove our main results from Sections \ref{sec:mainres} and \ref{sec:isingpotts} respectively. The proofs of the major technical lemmas (in the order in which they are presented in the paper) are provided in \cref{sec:fifth}. In the Appendix \ref{sec:appen}, we defer the proof of some of our supporting lemmas, which deal with properties of the base measure $\mu$, and general results on weak convergence.

\section{Proof of Main Results}\label{sec:pfmain}

\subsection{Proofs of \cref{prop:freen} and \cref{prop:propopt}}

In order to prove \cref{prop:freen}, we need the following preparatory result. 

\begin{prop}\label{lem:Tgraphon0}
Fix any $v\geq 2$, $p\ge 1$, $q>1$ such that $\frac{1}{p}+\frac{1}{q}\leq 1$ and $W\in \mathcal{W}$. Fix any probability measure $\nu$ supported on $[0,1]\times\R$ with first marginal $\mathrm{Unif}[0,1]$ and sample $(U_1,V_1),\ldots, (U_v,V_v)\overset{i.i.d.}{\sim}\nu$. Then the following conclusions hold:
\begin{enumerate}

\item[(i)] We have:
\begin{align*}
\E\left(\prod_{(a,b)\in E(H)} |W(U_a,U_b)|\right)\le \|W\|_{ \Delta}^{|E(H)|}.
\end{align*}

    \item[(ii)] For any measurable $\phi:\R^v\to\R$  we have
\begin{align*}
\E\left[\left(\prod_{(a,b)\in E(H)} |W(U_a,U_b)|\right)|\phi(V_1,\ldots ,V_v)|\right]\le \|W\|_{q \Delta}^{|E(H)|} \Big(\E |\phi(V_1,\ldots,V_v)|^p\Big)^{\frac{1}{p}}.
\end{align*}

\item[(iii)]
With $\mathrm{Sym}[.]$  as in~\cref{def:symmfndef}, we have
\begin{align*}
     \E \Big[\mathrm{Sym}[|W|](U_1,\ldots,U_v)^q\Big] \le 
\|W\|_{q\Delta}^{q|E(H)|}.
\end{align*}

\end{enumerate}
\end{prop}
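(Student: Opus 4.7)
The three parts have a layered structure in which (i) is the foundational estimate and (ii) and (iii) follow from it by short applications of H\"older and Jensen, respectively. My plan is therefore to prove (i) first by a graph-indexed H\"older (Finner-type) inequality, and then deduce (ii) and (iii) from it. For (i), since $U_1,\ldots,U_v$ are i.i.d.\ $\mathrm{Unif}[0,1]$, I would assign the exponent $\Delta$ to each of the two endpoints of every edge $e=(a,b)\in E(H)$; because the total weight at any vertex $a$ is $d_a/\Delta\le 1$, where $d_a\le\Delta$ is the degree of $a$ in $H$, these exponents are H\"older-admissible and we obtain
\[
\mathbb{E}\prod_{(a,b)\in E(H)}|W(U_a,U_b)|\le \prod_{(a,b)\in E(H)}\|W\|_\Delta=\|W\|_\Delta^{|E(H)|}.
\]
An equivalent and more elementary route is induction on $|V(H)|$: at each step pick a vertex $a$ of minimum degree $d_a\le\Delta$, integrate out $U_a$ by H\"older on its $d_a$ incident edges, pull out a factor of $\|W\|_\Delta^{d_a}$, and recurse on the graph with $a$ removed.

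For (ii), let $q':=p/(p-1)$, so that $1/p+1/q'=1$. The usual H\"older inequality on the joint probability space of $(U_1,V_1),\ldots,(U_v,V_v)$ gives
\[
\mathbb{E}\Bigl[\Bigl(\prod_{(a,b)\in E(H)}|W(U_a,U_b)|\Bigr)|\phi(V_1,\ldots,V_v)|\Bigr]\le\Bigl(\mathbb{E}\prod_{(a,b)\in E(H)}|W(U_a,U_b)|^{q'}\Bigr)^{1/q'}\bigl(\mathbb{E}|\phi(V_1,\ldots,V_v)|^p\bigr)^{1/p}.
\]
Since $1/p+1/q\le 1$, we have $q\ge q'$, and monotonicity of $L^r$-norms on a probability space yields $(\mathbb{E}\prod|W|^{q'})^{1/q'}\le(\mathbb{E}\prod|W|^{q})^{1/q}$. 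Applying part (i) to $|W|^q$ in place of $|W|$, and noting $\||W|^q\|_\Delta=\|W\|_{q\Delta}^q$, bounds the $L^q$-norm of the $W$-product by $\|W\|_{q\Delta}^{|E(H)|}$, which is exactly the inequality claimed in (ii).

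For (iii), Jensen's inequality (valid since $q>1$) applied to the uniform probability measure on $S_v$ gives
\[
\mathrm{Sym}[|W|](U_1,\ldots,U_v)^q\le\frac{1}{v!}\sum_{\sigma\in S_v}\prod_{(a,b)\in E(H)}|W(U_{\sigma(a)},U_{\sigma(b)})|^q.
\]
Taking expectations and using the exchangeability of $(U_1,\ldots,U_v)$, each of the $v!$ summands has the same expectation as $\mathbb{E}\prod_{(a,b)\in E(H)}|W(U_a,U_b)|^q$, and part (i) applied once more to $|W|^q$ in place of $|W|$ bounds this common value by $\|W\|_{q\Delta}^{q|E(H)|}$, as required.

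The main conceptual obstacle is part (i); the cleanest route is to cite Finner's inequality (or the $L^p$ counting lemma already invoked in the paper via \cite{bc_lpi,borgs2018p}), but a fully self-contained proof by induction on $|V(H)|$, integrating out a minimum-degree vertex at each step, is equally straightforward. Once (i) is in place, parts (ii) and (iii) are essentially exponent-bookkeeping using the single inequalities of H\"older and Jensen described above.
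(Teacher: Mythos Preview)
Your proposal is correct and follows essentially the same approach as the paper. The paper defers (i) and (ii) to external citations (the $L^p$ counting lemma of Borgs--Chayes--Cohn--Zhao and a lemma from \cite{bhattacharya2022ldp}), whose proofs are precisely the Finner/H\"older arguments you outline; for (iii) both you and the paper apply Jensen (Lyapunov) to the average over $S_v$ and then invoke the graph-product bound with $|W|^q$ in place of $|W|$.
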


Parts (i) and (ii) above follow from \cite[Proposition 2.19]{borgsdense1} and \cite[Lemma 2.2]{bhattacharya2024ldp} respectively. However, part (iii) is new and a proof is provided in \cref{sec:appenaux}. While the proof of~\cref{prop:freen} only uses \cref{lem:Tgraphon0} part (ii), the other parts of \cref{lem:Tgraphon0} will be useful in the rest of the paper.

\begin{remark}\label{rem:twph} 
When the RHS of the display in part (ii) of~\cref{lem:Tgraphon0} is finite, we can define
\begin{align*}
T_{W,\phi}(\nu):=\E\left[\left(\prod_{(a,b)\in E(H)} W(U_a,U_b)\right)\phi(V_1,\ldots ,V_v)\right].
\end{align*}
\end{remark}

\begin{proof}[Proof of~\cref{prop:freen}]
Under the conditions of \cref{prop:freen},   \cref{lem:Tgraphon0} part (ii) implies that $T_{W,\phi}(\nu)$ is  well-defined and finite.

\emph{(i)} This is pointed out in \cite[Definition 1.6]{bhattacharya2024ldp} by invoking \cite[Lemma 2.2]{bhattacharya2024ldp}.

\emph{(ii), (iii)} 
These are restatements of parts (i) and (ii) of~\cite[Theorem 1.6]{bhattacharya2024ldp}. The fact that $\sup_{n\ge 1} Z_n(\theta) < \infty$ follows from the proof of \cite[Corollary 1.3]{bhattacharya2024ldp}. To prove $\Xi(F_{\theta})$ is compact, we invoke~\cite[Remark 2.1]{bhattacharya2024ldp} to note that
$$
\Xi(F_\theta)=\arg\min_{\nu\in \wmm} J(\nu),$$
where the function $J(.)$ defined by
$$J(\nu):=D(\nu|\rho)-\theta T_{W,\phi}(\nu)$$ with $\phi(x_1,\ldots ,x_v)=\prod_{a=1}^v x_a$ has compact level sets  
(by~\cite[Corollary 1.3, part (ii)]{bhattacharya2024ldp}), and $\wmm$ is a closed subset of probability measures.

\end{proof}

\noindent Next, we state an elementary property of $\gamma(\cdot)$ that will be useful in proving~\cref{prop:propopt} below. A short proof is provided in~\cref{sec:appen}.
\begin{lmm}\label{lem:KLcont}
The function $\gamma\circ \beta(\cdot): cl(\mathcal{N})\to [0,\infty]$ is a continuous (possibly extended) real-valued function.
\end{lmm}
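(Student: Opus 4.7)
The plan is to recognize $\gamma\circ\beta$ as the Legendre--Fenchel transform of $\alpha$ and then appeal to standard properties of convex conjugates. For $t\in\mathcal{N}$, the concave function $\theta\mapsto \theta t-\alpha(\theta)$ has derivative $t-\alpha'(\theta)$, which vanishes at the unique point $\theta=\beta(t)$; hence
\[
\gamma(\beta(t)) \;=\; \beta(t)\,t-\alpha(\beta(t)) \;=\; \sup_{\theta\in\R}\bigl[\theta t-\alpha(\theta)\bigr] \;=:\; I(t).
\]
Since $\alpha$ is smooth and $\beta$ is smooth on $\mathcal{N}$ (being the inverse of the smooth, strictly increasing bijection $\alpha':\R\to\mathcal{N}$), $\gamma\circ\beta$ is smooth, hence continuous, on the open interval $\mathcal{N}$. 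If $\mathrm{cl}(\mathcal{N})=\mathcal{N}$, the proof is complete; otherwise only the finite endpoints of $\mathcal{N}$ remain.

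Next I would verify that the boundary definitions of $\gamma$ from~\cref{def:tilt} agree with the corresponding boundary values of the Legendre transform. Suppose $s:=\sup\{\mathcal{N}\}<\infty$; since $\alpha'(\theta)<s$ for every $\theta\in\R$, the function $\theta\mapsto \theta s-\alpha(\theta) = -\log\int e^{-\theta(s-x)}\,d\mu(x)$ is strictly increasing in $\theta$, and so its supremum equals its limit as $\theta\to\infty$. Because $s$ is the essential supremum of $\mu$, $e^{-\theta(s-x)}\downarrow \mathbf{1}_{\{x=s\}}$ $\mu$-a.e.~as $\theta\uparrow\infty$, and monotone convergence yields
\[
I(s) \;=\; -\log \mu(\{s\}) \;=\; D(\delta_s\,\|\,\mu) \;=\; \gamma(\infty),
\]
with the convention $-\log 0=+\infty$. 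The case $\inf\{\mathcal{N}\}>-\infty$ is handled symmetrically, so $\gamma\circ\beta$ agrees with the convex conjugate $I$ on all of $\mathrm{cl}(\mathcal{N})$.

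Finally, continuity on $\mathrm{cl}(\mathcal{N})$ as an extended real valued map follows from general convex analysis: $I$, being a pointwise supremum of affine functions, is convex and lower semi-continuous on $\R$. At a finite right endpoint $s=\sup\{\mathcal{N}\}$: if $I(s)=+\infty$, lower semi-continuity forces $\liminf_{t\uparrow s}I(t)\ge I(s)$, so $I(t)\to\infty$; if $I(s)<\infty$, convexity along the segment from any fixed $t_0\in\mathcal{N}$ to $s$ gives
\[
I\bigl((1-\lambda)t_0+\lambda s\bigr) \;\le\; (1-\lambda)\,I(t_0)+\lambda\,I(s),
\]
and sending $\lambda\uparrow 1$ yields $\limsup_{t\uparrow s}I(t)\le I(s)$, while lower semi-continuity gives the reverse inequality, hence continuity at $s$. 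The left endpoint (if finite) is treated identically. The only non-routine step is the monotone-convergence computation identifying the boundary value of $I$ with $D(\delta_s\,\|\,\mu)$; everything else is a standard application of the convexity and lower semi-continuity of the Legendre transform.
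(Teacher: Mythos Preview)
Your proof is correct and takes a genuinely different route from the paper's. You recognize $\gamma\circ\beta$ as the Legendre--Fenchel conjugate $I=\alpha^*$, compute its boundary value via monotone convergence, and then invoke the convexity and lower semi-continuity of $I$ to obtain continuity at the endpoints. The paper instead argues directly from the probabilistic representation: it observes that $\mu_{\beta(u)}\Rightarrow\delta_a$ weakly as $u\to a:=\sup\{\mathcal{N}\}$, uses lower semi-continuity of the KL divergence to get $\liminf_{u\to a}\gamma(\beta(u))\ge D(\delta_a\|\mu)=\gamma(\infty)$, and for the matching upper bound (when $\mu(\{a\})>0$) plugs in the elementary inequality $\alpha(\theta)\ge \theta a+\log\mu(\{a\})$ to obtain $\gamma(\beta(u))\le -\log\mu(\{a\})$.

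Your approach is more systematic: once the identification $\gamma\circ\beta=I$ on $\mathrm{cl}(\mathcal{N})$ is in place, everything reduces to textbook convex analysis, and the argument would transfer verbatim to any base measure with full-line mgf. The paper's approach avoids naming the conjugate explicitly and instead uses the statistical interpretation (tilted measures converging to a point mass) together with a one-line tail bound; this is more hands-on but requires splitting into the atom/no-atom cases. Both proofs hinge on the same underlying lower semi-continuity, just expressed in different languages.
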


\begin{proof}[Proof of~\cref{prop:propopt}]
\emph{ (i)} By switching the variables of integration, it is easy to check that the optimization problem~\eqref{eq:gibbsop} is equivalent to maximizing the function
			$$\mathcal{G}_{W}(f):=\theta\int_{[0,1]^v} \mathrm{Sym}[W](x_1,\ldots ,x_v)\left(\prod_{a=1}^v f(x_a)\,d x_a\right)-\int_{[0,1]}\gamma(\beta(f(x)))dx.$$
			Note that for all $\varepsilon\in[0,1]$, $g\in\mathcal{L}_p$ and $f\in F_\theta\subseteq \mathcal{L}_p$, the function $f+\varepsilon(f-g)=(1-\lambda)f+\varepsilon g\in\mathcal{L}_p$, and so 
			$\mathcal{G}_{W}(f+\varepsilon (g-f))\leq \mathcal{G}_{W}(f)$. This gives
			\begin{equation*}
				\frac{d}{d\varepsilon}\mathcal{G}_{W}(f+\varepsilon (g-f))\bigg|_{\varepsilon=0}\leq 0,
			\end{equation*}
			which is equivalent to
			 \begin{align}\label{eq:propopt2}
				\footnotesize{\int_{[0,1]}\Big((g(x_1)-f(x_1))\underbrace{\left(\beta(f(x_1))-\theta v \int_{[0,1]^{v-1}} \mathrm{Sym}[W](x_1,\ldots ,x_v)\left(\prod_{a=2}^v f(x_a)\,dx_a\right)\right)}_{\delta(x_1)}\,dx_1\Big)\geq 0.}	`
			\end{align}
		We will show that $\lambda(\{x_1\in [0,1]:\ \delta(x_1)\neq 0\})=0$, where $\lambda$ denotes the Lebesgue measure on $\mathbb{R}$. Let us assume the contrary. Without loss of generality, assume that $\lambda(\{x_1\in [0,1]:\ \delta(x_1)> 0\})>0$. On this set, we have
			$$f(x_1)> \alpha'\left(\theta v \int_{[0,1]^{v-1}} \mathrm{Sym}[W](x_1,\ldots ,x_v)\left(\prod_{a=2}^v f(x_a)\,dx_a\right)\right)=:v(x_1),$$
			yielding
			$$\lambda(\{x_1\in [0,1]:\ \delta(x_1)> 0, f(x_1)>v(x_1)\})>0.$$
			This implies that there exists $\varepsilon>0$ such that $$\lambda(\mathcal{A}_\varepsilon)>0,\quad \mathcal{A}_\varepsilon:=\{x_1\in [0,1]:\ \delta(x_1)> \epsilon, f(x_1)>v(x_1)+\varepsilon\})>0.$$
			Define a function $g:[0,1]\mapsto {\rm cl}(\mathcal{N})$ by setting
			$$g(x_1):=\begin{cases} f(x_1)-\varepsilon & \mbox{if}\ x_1\in \mathcal{A}_\epsilon,\\ f(x_1) & \mbox{otherwise}.\end{cases}$$
			Note that $g\in\mathcal{L}_p$, as $f \in \mathcal{L}_p$, and	
		 $\int (g(x_1)-f(x_1))\delta(x_1) dx_1 <0$, contradicting \eqref{eq:propopt2}. This shows that $f(x_1)=v(x_1)$ a.s., as desired.
\\

				\emph{ (ii)} We will prove the contrapositive. Suppose there exists an almost surely constant function $f\in F_{\theta}$, say $f(x)=c\neq 0$ for a.e. $x\in [0,1]$. Then by~\eqref{eq:propoptshow}, we have $c=\alpha'(\theta c^{v-1}\mathcal{T}[\mathrm{Sym}[W]](x))$ for a.e. $x\in [0,1]$. This implies $\mathcal{T}[\mathrm{Sym}[W]](\cdot)=\frac{\beta(c)}{\theta c^{v-1}}$ is constant almost surely, which is a contradiction. 
			\newline 
			
			\emph{ (iii)} Without loss of generality, assume that $\mathcal{T}[\mathrm{Sym}[W]](x)=1$ for a.e. $x\in [0,1]$. Then $\mathrm{Sym}[W](x_1,\ldots ,x_v)$ is a probability density function on $[0,1]^v$ with all marginals uniformly distributed on $[0,1]$. By an application of H\"older's inequality with respect to the probability measure induced by $\mathrm{Sym}[W]$, we then have
			\begin{align*}
			    \mathcal{G}_{W}(f)=\E_{(Z_1,\ldots ,Z_v)\sim \mathrm{Sym}[W]}\bigg[\theta \prod_{a=1}^v f(Z_a)\bigg]\leq \int_{[0,1]} \theta |f(x)|^v\,dx.
			\end{align*}
			Consequently, it holds that
			\begin{align}\label{eq:con?}
				\sup_{f\in\mathcal{L}_p} \mathcal{G}_{W}(f)&\leq \sup_{f\in\mathcal{L}_p}\left\{\int_{0}^1 [\theta |f(x)|^v-\gamma(\beta(f(x)))]\,dx\right\}\leq \sup_{t\in {\rm cl}(\mathcal{N})} \{\theta |t|^v-\gamma(\beta(t))\}.
			\end{align}
			(a) If $v$ is even, then \eqref{eq:con?} gives
			$$\sup_{f\in\mathcal{L}_p} \mathcal{G}_{W}(f)\leq \sup_{t\in {\rm cl}(\mathcal{N})} \{\theta t^v-\gamma(\beta(t))\}.$$
			Equality holds in the above display by taking $f$ to be constant functions. To find out the maximizing $f$, we need
			equality in H\"older's inequality. So $f$ must be a constant function a.s.
			\\
			(b) If $\gamma(\beta(t))\le \gamma(\beta(-t))$ for all $t\in \mathcal{N}\cap[0,\infty)$, the same inequality continues to hold for all $t\in {\rm cl}(\mathcal{N})\cap (0,\infty) $ by~\cref{lem:KLcont}. Thus \eqref{eq:con?} gives
	
	$$\mathcal{G}_{W}(f)\le \sup_{\substack{t\in {\rm cl}(\mathcal{N}), t\geq 0}} \{\theta t^v-\gamma(\beta(t))\},$$
			Again equality holds in the above display by taking supremum over constant functions, and the maximizing $f$ is again constant a.s..
\end{proof}				

\begin{proof}[Proof of~\cref{prop:mu_suff}]
    The result for (a) follows immediately on noting that $\gamma(\beta(t))$ is not even defined for $t<0$, and so the definition of stochastic non-negativity holds vacuously.  We thus focus on  proving (b). In this case there exists a symmetric measure $\mu^{(s)}$ such that $\mu=\mu^{(s)}_B$ (see \cref{def:tilt}). Fixing  $t> 0$ such that $-t\in \alpha'(\R)$, using symmetry of $\nu$ it follows that $t\in \alpha'(\R)$, and 
			$$\alpha(\theta)=\alpha_{\mu^{(s)}}(\theta+B)-\alpha_{\mu^{(s)}}(B),\text{ where }{\alpha}_{\mu^{(s)}}(\theta):=\log \int_\R e^{\theta x} d{\mu^{(s)}}(x)\text{ for all }\theta\in \R.$$
			Thus, with ${\beta}_{\mu^{(s)}}$ denoting the inverse of ${\alpha}_{\mu^{(s)}}$, we have ${\beta}_{\mu^{(s)}}(t)=\beta(t)+B$ for all $t\in \mathcal{N}_{\mu^{(s)}}$, where ${\mathcal{N}}_{\mu^{(s)}}$ is the natural parameter space of $\mu^{(s)}$.
			This gives
            \begin{small}
\begin{align}\label{eq:simplify}
\gamma(\beta(t))&=t{\beta}_{\mu^{(s)}}(t)-Bt-{\alpha}_{\mu^{(s)}}({\beta}_{\mu^{(s)}}(t)) + {\alpha}_{\mu^{(s)}}(B) \nonumber \\
&={\gamma}_{\mu^{(s)}}({\beta}_{\mu^{(s)}}(t))+{\alpha}_{\mu^{(s)}}(B)-Bt.
\end{align}
\end{small}
    \noindent As $\mu^{(s)}$ is symmetric about $0$, so ${\gamma}_{\mu^{s)}}(\cdot)$ and ${\beta}_{\mu^{(s)}}(\cdot)$ are even functions. The assumption $B\geq 0$ along with~\eqref{eq:simplify} gives $\gamma(\beta(t))\leq \gamma(\beta(-t))$ for $t\geq 0$, completing the proof.
\end{proof}

In the sequel, we will first prove \cref{lem:basicprop} independently. Then we will prove \cref{lem:wealimi} using \cref{lem:basicprop}. In order to prove \cref{lem:basicprop}, we need the following lemma whose proof we defer.
\begin{lmm}\label{lem:fixsol0}
					Suppose $\mu$ satisfies \eqref{eq:tailp} for some $p>1$. 
\newline				
     (i) Then with $\alpha(\cdot)$ as in~\cref{def:tilt} we have
	$$\lim_{\theta\to\pm\infty}\frac{\alpha'(\theta)}{|\theta|^{\frac{1}{p-1}}}=0.$$
					(ii)
					With $\beta(\cdot)$ as in~\cref{def:tilt} we have $$\lim_{x\to \{\inf\{\mathcal{N}\},\sup\{\mathcal{N}\}\}}\frac{\beta(x)}{|x|^{p-1}}=\infty.$$
					\end{lmm}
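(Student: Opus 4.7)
For part (i), the plan is to combine Young's inequality with the convexity of $\alpha$. Young's inequality with conjugate exponents $p$ and $q=p/(p-1)$ gives, for any $\lambda>0$,
\[
\theta x \leq \frac{\lambda |x|^p}{p} + \frac{|\theta|^{q}}{q\,\lambda^{q/p}}.
\]
Exponentiating and integrating under $\mu$, assumption \eqref{eq:tailp} (which implies $C(\lambda):=\log\E_\mu e^{\lambda |X_1|^p/p}<\infty$) yields the key upper bound
\[
\alpha(\theta)\;\leq\; C(\lambda) + \tfrac{p-1}{p}\,\lambda^{-1/(p-1)}\,|\theta|^{p/(p-1)}.
\]
Combined with the elementary lower bound $\alpha(\theta)\geq \alpha(0)+\theta\alpha'(0)=\theta\alpha'(0)$ coming from convexity and $\alpha(0)=0$, we have a two-sided control $|\alpha(\theta)|=O(|\theta|^{p/(p-1)})$.

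Next, by convexity, for $\theta>0$,
\[
\theta\,\alpha'(\theta)\;\leq\;\alpha(2\theta)-\alpha(\theta)\;\leq\;C(\lambda)+\tfrac{p-1}{p}\lambda^{-1/(p-1)}(2\theta)^{p/(p-1)}+|\theta\,\alpha'(0)|.
\]
Dividing by $\theta^{p/(p-1)}=\theta\cdot\theta^{1/(p-1)}$ and sending $\theta\to\infty$ yields $\limsup_{\theta\to\infty}\alpha'(\theta)/\theta^{1/(p-1)}\leq C\,\lambda^{-1/(p-1)}$; letting $\lambda\to\infty$ then gives the desired limit $0$. The case $\theta\to-\infty$ is handled symmetrically: the reverse convexity direction $\alpha(2\theta)\geq \alpha(\theta)+\theta\alpha'(\theta)$ (which for $\theta<0$ is a \emph{lower} bound on $\alpha'(\theta)$) gives $\liminf_{\theta\to-\infty}\alpha'(\theta)/|\theta|^{1/(p-1)}\geq 0$, while the monotonicity of $\alpha'$ ($\alpha'(\theta)\leq \alpha'(0)$ for $\theta<0$) provides the matching upper bound $\limsup\leq 0$.

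For part (ii), the plan is to simply invert. If $\sup\mathcal{N}<\infty$, then by definition $\beta(x)\to +\infty$ as $x\to\sup\mathcal{N}$ while $|x|$ remains bounded, so $\beta(x)/|x|^{p-1}\to\infty$ trivially. Otherwise $\sup\mathcal{N}=\infty$, and setting $\theta=\beta(x)$ with $x=\alpha'(\theta)$, we have $\theta\to\infty$ as $x\to\infty$; part (i) gives $x/\theta^{1/(p-1)}\to 0$, which rearranges to $\theta/x^{p-1}\to\infty$, i.e.\ $\beta(x)/|x|^{p-1}\to\infty$. The behavior at $\inf\mathcal{N}$ is treated symmetrically (with the appropriate sign convention).

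\textbf{Anticipated obstacle.} The argument is essentially a computation, with no deep difficulty. The most delicate point is getting the right matching of exponents when passing from Young's inequality (which produces $|\theta|^{p/(p-1)}$) through convexity (which produces $\theta\cdot\alpha'(\theta)$) to the claimed rate $|\theta|^{1/(p-1)}$; the bookkeeping is cleanest when one keeps $\lambda$ as a free parameter throughout and optimizes at the end. A secondary subtlety is the case $\theta\to-\infty$, where convexity a priori only yields a one-sided bound and one needs the monotonicity of $\alpha'$ to close the argument.
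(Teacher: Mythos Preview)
Your argument is correct; part (ii) is essentially identical to the paper's inversion argument, while for part (i) you take a genuinely different route. The paper works directly with the integral representation $\alpha'(\theta)=\int y e^{\theta y}d\mu / \int e^{\theta y}d\mu$, splitting the numerator over $\{|y|\le\delta\theta^{1/(p-1)}\}$ and its complement, and handling separately the case $\mu(0,\infty)=0$ (where $\alpha'$ stays bounded). Your approach instead bounds $\alpha(\theta)$ via Young's inequality and then passes to $\alpha'$ through the convexity secant inequality $\theta\alpha'(\theta)\le\alpha(2\theta)-\alpha(\theta)$, avoiding any case split on the support of $\mu$. Both arguments hinge on the free parameter ($\delta$ in the paper, $\lambda$ in yours) being sent to its limit at the end.

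One small omission: for $\theta\to+\infty$ you only establish $\limsup\le 0$ and then declare ``the desired limit $0$.'' You still need $\liminf\ge 0$, but this follows from exactly the monotonicity trick you already use for $\theta\to-\infty$: since $\alpha'$ is increasing, $\alpha'(\theta)\ge\alpha'(0)$ for $\theta>0$, whence $\alpha'(\theta)/\theta^{1/(p-1)}\ge\alpha'(0)/\theta^{1/(p-1)}\to 0$. It would be cleaner to state both directions symmetrically.
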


			\begin{proof}[Proof of \cref{lem:basicprop}]

\emph{(i)} 
The conclusion of \eqref{eq:lip11}  follows from the proof of \cite[Eq 2.28]{bhattacharya2024ldp} (see Page 25).
 Proceeding to verify \eqref{eq:lip13}, fix $i\in [n]$ and use H\"older's inequality to note that
   {\small\begin{align}\label{eq:pointwise}
   |m_i|&\le \frac{v}{n^{v-1}}\sum_{(i_2,\ldots ,i_v)\in [n]^{v-1}}|\tQh(i,i_2,\ldots ,i_v)|\prod_{a=2}^v |X_{i_a}|\nonumber \\
   &\le v \left(\frac{1}{n^{v-1}}\sum_{(i_2,\ldots ,i_v)\in [n]^{v-1}} |\tQh(i,i_2,\ldots ,i_v)|^q\right)^{\frac{1}{q}}\left(\frac{1}{n^{v-1}}\sum_{(i_2,\ldots ,i_v)\in [n]^{v-1}}\prod_{a=2}^v |X_{i_a}|^p \right)^{\frac{1}{p}}\nonumber \\
   &=v \left(\frac{1}{n^{v-1}}\sum_{(i_2,\ldots ,i_v)\in [n]^{v-1}} |\tQh(i,i_2,\ldots ,i_v)|^q\right)^{\frac{1}{q}}\left(\frac{1}{n}\sum_{j=1}^n |X_{j}|^p \right)^{\frac{v-1}{p}}.
   \end{align}}
   Raising both sides to the $q^{th}$ power and summing~\eqref{eq:pointwise} over $i\in [n]$ gives
   \begin{align}\label{eq:again0}\frac{1}{n}\sum_{i=1}^n |m_i|^q&\leq v^q \left(\frac{1}{n}\sum_{j=1}^n |X_j|^p\right)^{\frac{q(v-1)}{p}}\left(\frac{1}{n^v}\sum_{(i_1,\ldots ,i_v)\in [n]^v} |\tQh(i_1,i_2,\ldots ,i_v)|^q\right)\nonumber \\
   &\le v^q \left(\frac{1}{n}\sum_{j=1}^n |X_j|^p\right)^{\frac{q(v-1)}{p}}
\|W_{Q_n}\|_{q\Delta}^{q|E(H)|},
\end{align}
where the last inequality uses \cref{lem:Tgraphon0} part (c), with $W\equiv W_{Q_n}$. The conclusion then follows by \eqref{eq:q} and \eqref{eq:lip11}.

\vspace{0.05in}

Next, we will prove \eqref{eq:lip14}. By \cref{lem:fixsol0} part (i), there exists $c_{\mu}>0$ such that for all $\theta\in \R$ we have
\begin{align}\label{eq:again1}
|\alpha'(\theta)|\le c_{\mu} |\theta|^{\frac{1}{p-1}}.
\end{align}
Now, note the following chain of equalities/inequalities with explanations to follow.
   
 {\small  \begin{align*}
   |\alpha'(\theta m_i)| &=\bigg|\alpha'\left(\frac{\theta v}{n^{v-1}}\sum_{(i_2,\ldots ,i_v)\in \mathcal{S}(n,v,i)}|\tQh(i,i_2,\ldots ,i_v)|\prod_{a=2}^v |X_{i_a}|\right)\bigg|\\ &\le c_{\mu} \left(\frac{\theta v}{n^{v-1}}\sum_{(i_2,\ldots ,i_v)\in [n]^{v-1}}|\tQh(i,i_2,\ldots ,i_v)|\prod_{a=2}^v |X_{i_a}|\right)^{\frac{1}{p-1}}\\ &\le c_{\mu}(\theta v)^{\frac{1}{p-1}}\left(\frac{1}{n^{v-1}}\sum_{(i_2,\ldots ,i_v)\in [n]^{v-1}} |\tQh(i,i_2,\ldots ,i_v)|^q\right)^{\frac{1}{q(p-1)}}\left(\frac{1}{n}\sum_{j=1}^n |X_{j}|^p \right)^{\frac{v-1}{p(p-1)}}.
   \end{align*}}
 
   The first inequality follows directly from \eqref{eq:again1}. The second inequality follows from \eqref{eq:pointwise}. 
   Raising both sides to the power $p$ and summing over $i\in [n]$, we get:
   \begin{align}\label{eq:again2}
    &\;\;\;\;\;\frac{1}{n}\sum_{i=1}^n |\alpha'(\theta m_i)|^p\nonumber \\ &\le \frac{1}{n}\sum_{i=1}^n c_{\mu}^p (\theta v)^{\frac{p}{p-1}}\left(\frac{1}{n^{v-1}}\sum_{(i_2,\ldots ,i_v)} |\tQh(i,i_2,\ldots ,i_v)|^q\right)^{\frac{p}{q(p-1)}}\left(\frac{1}{n}\sum_{j=1}^n |X_{j}|^p \right)^{\frac{v-1}{p-1}}\nonumber \\ &\le c_{\mu}^p (\theta v)^{\frac{p}{p-1}}\left(1+\lVert W_{Q_n}\rVert_{q\Delta}^{q|E(H)|}\right)\left(\frac{1}{n}\sum_{j=1}^n |X_{j}|^p \right)^{\frac{v-1}{p-1}}.
   \end{align}
   The final inequality follows by noting that $|x|^{\frac{p}{q(p-1)}}\le 1+|x|$ and then using \cref{lem:Tgraphon0} part (c), with $W\equiv W_{Q_n}$. The conclusion follows by \eqref{eq:q} and \eqref{eq:lip11}.
   
\vspace{0.05in}

\emph{(ii)} The proof of \eqref{eq:lip12} is very similar to the proof of (a). Firstly, $\sup_{\nu\in \Xi(F_{\theta})} \fmm_p(\nu)<\infty$ follows from \cite[Eq 2.29]{bhattacharya2024ldp}. This also implies:
\begin{equation}\label{eq:lip16}
\sup_{f\in F_{\theta}} \lVert f\rVert_p= \sup_{\nu\in \Xi(F_{\theta})} \lVert \E_{\nu} [V|U]\rVert_p\le \sup_{\nu\in \Xi(F_{\theta})} \E_{\nu}|V|^p=\sup_{\nu\in \Xi(F_{\theta})} \fmm_p(\nu)<\infty.
\end{equation}

Next, in the same vein as \eqref{eq:again0}, we get
$$\sup_{\nu\in\mathfrak{B}_{\theta}} \fmm_q(\nu)\le v^q \sup_{f\in F_{\theta}} \lVert f\rVert_p^{q(v-1)}\lVert W\rVert_{q\Delta}^{q|E(H)|}<\infty,$$
by invoking \eqref{eq:lip16} and \eqref{eq:W_q}, thereby proving the second conclusion. Finally, proceeding similar to \eqref{eq:again2}, 
we have
$$\sup_{\nu\in\mathfrak{B}^*_{\theta}} \fmm_p(\nu)\le c_{\mu}^p (\theta v)^{\frac{p}{p-1}}\left(1+\lVert W\rVert_{q\Delta}^{q|E(H)|}\right)\sup_{f\in F_{\theta}} \lVert f\rVert_p^{\frac{(v-1)p}{p-1}}<\infty,$$
where we have used \eqref{eq:lip16} and \eqref{eq:W_q}. This completes the proof of part (b).




\end{proof}
       \subsection{Proof of \cref{lem:wealimi}}

\emph{(i)} The fact that $\Xi(F_{\theta})\subseteq \wmm_p$ follows directly from \eqref{eq:lip12}. 
By an application of H\"older's inequality with \cref{lem:Tgraphon0} part (iii), we get:
$$\E\left[\mathrm{Sym}[|W|](U_1,U_2,\ldots ,U_v)\prod_{a=2}^v |V_a|\right]\le \lVert W\rVert_{q\Delta}^{|E(H)|}\left(\E|V_1|^p\right)^{\frac{v-1}{p}},$$
which is finite on using \eqref{eq:tailp} and \eqref{eq:W_q}.
By Fubini's Theorem, $\vartheta_{W,\nu}(.)$ (see~\eqref{eq:varthdef}) is well-defined a.s.~on $[0,1]$, as desired.

\begin{remark}
    Note that the above argument does not require $1/p+1/q<1$ but the weaker condition $1/p+1/q\le 1$.
\end{remark}
\vspace{0.05in} 

	\emph{(ii)} We begin the proof with the following definition.

			\begin{defn}\label{def:pivotlem}
			Let $\mathcal{W}$ and $\wmm_p$ be as in~\cref{def:defirst}~and~\cref{def:M} respectively. 
			Recall that $\fmm_p(\nu)=\int |x|^p\,d\nu_{(2)}(x)<\infty$ (from \cref{def:M}) for $\nu\in \wmm_p$. Define $$\mr:=\{(W,\nu),\ W\in \mathcal{W},\ \nu\in\wmm_p,\ \lVert W\rVert_{q\Delta}<\infty\}.$$ Construct the following function $\Upsilon:\mr\to\mathcal{M}$ (the space of probability measures on $[0,1]\times\R$) by setting:
\begin{align}\label{eq:ups}\Upsilon(W,\nu):=\mathrm{Law}\left(U_1,\vartheta_{W,\nu}(U_1)\right).\end{align}
				Here $(U_1,V_1),\ldots ,(U_v,V_v)\overset{i.i.d.}{\sim}\nu$, 
    and $\vartheta_{W,\nu}(.)$ is as in \eqref{eq:varthdef}. Note that $\Upsilon(W,\nu)$ is well-defined for $(W,\nu)\in \mathcal{R}$, as the function $\vartheta_{W,\nu}(.)$ is well defined a.s.~by \cref{lem:wealimi} part (i). 
    Also for $L>0$ and a random variable $X$, set $X^{(L)}=X\mathbf{1}(|X|\le L)$. For any measure $\nu\in \widetilde{\mathcal{M}}$ and $(U,V)\sim\nu$, let $\nu^{(L)}$ denote the distribution of the truncated random variable $(U,V^{(L)})$.
			\end{defn}
  Set 		\begin{equation}\label{eq:needagain}
	 m_{i,R}^{(L)}:=\frac{v}{n^{v-1}}\sum_{(i_2,\ldots ,i_v)\in [n]^{v-1}} \tQh\left(i,i_2,\ldots ,i_v\right)\left(\prod_{a=2}^v X_{i_a}^{(L)}\right).
				\end{equation}
				As a shorthand, we denote $m_{i,R}:=m_{i,R}^{\infty}$. Let us also define 
    $$\mathbf{X}^{(L)}:=\{X_1^{(L)},\ldots ,X_n^{(L)}\},\quad \mathbf{m}_R^{(L)}:=\{m_{1,R}^{(L)},\ldots ,m_{n,R}^{(L)}\}, \quad \mathbf{m}_R:=\{m_{1,R},\ldots ,m_{n,R}\}.$$
    Following \eqref{eq:ln}, we have:
    $$\tmln:=\frac{1}{n}\sum_{i=1}^n \delta_{\left(\frac{i}{n},m_{i,R}\right)}.$$
     
Next we generate $U\sim\mathrm{Unif}[0,1]$. We define a map $\widetilde{\mathfrak{L}}_n$ from $\R^n$ to  $\widetilde{\mathcal{M}}$ given by 
    \begin{equation}\label{eq:unifemp}
    \widetilde{\mathfrak{L}}_n(\mathbf{x}):=\mathrm{Law}(U,x_{\lceil n U\rceil}),\qquad \mathbf{x}=(x_1,\ldots ,x_n).
    \end{equation}
    The map $\widetilde{\mathfrak{L}}_n$ can be thought of as the continuous analogue of the discrete empirical measure map ${\mathfrak{L}}_n$.
    In view of \eqref{eq:unifemp}, note that $\tml$, $\tln$, $\tlnl$, and $\tlns$ denotes the laws of $(U,X_{\lceil nU\rceil})$, $(U,X_{\lceil nU\rceil}^{(L)})$, $(U,m_{\lceil nU\rceil, R}^{(L)})$, and $(U,m_{\lceil nU\rceil, R})$  conditioned on $X_1,\ldots ,X_n$, respectively. 
			    Also, with $\Upsilon$ as in~\cref{def:pivotlem}, we have
			\begin{equation}\label{eq:highordd1}
		\Upsilon\left(W_{Q_n},\tml\right)=\tlns,\quad \Upsilon\left(W_{Q_n},\tln\right)=\tlnl.
				\end{equation}
			   In order to prove the above, note that,  given any bounded continuous real-valued function $f$ on $[0,1]\times \R$, we have:
		    \begin{align*}
			& \E_{\Upsilon\left(W_{Q_n},\tml\right)}[f]\\
			&=\int_0^1 f\left(u_1,v\int_{[0,1]^{v-1}} \mathrm{Sym}[Q_n](\lceil n u_1\rceil,\lceil n u_2\rceil,\ldots, \lceil n u_v\rceil)\left(\prod_{a=2}^v X_{\lceil nu_a\rceil}\right)\,du_2\ldots \,du_v\right)\,du_1 \\&=\sum_{i_1=1}^n \int_{\frac{i_1-1}{n}}^{\frac{i_1}{n}} f\left(u_1,\frac{v}{n^{v-1}}\sum_{(i_2,\ldots ,i_v)\in [n]^{v-1}} \mathrm{Sym}[Q_n](\lceil nu_1\rceil,i_2,\ldots ,i_v)\left(\prod_{a=2}^v X_{i_a}\right)\right)\,du_1\\ &=\sum_{i_1=1}^n \int_{\frac{i_1-1}{n}}^{\frac{i_1}{n}} f(u_1,m_{\lceil nu_1\rceil, R})\,du_1=\E_{\tlns}[f],
			    \end{align*}
			    and so the first conclusion of \eqref{eq:highordd1} holds. The proof of the second conclusion is similar. By the definition of  $\mathfrak{B}_{\theta}$ in~\cref{lem:wealimi}, we have
			    \begin{equation}\label{eq:highord2}
				\mathfrak{B}_{\theta}=\{\Upsilon(W,\nu):\ \nu\in \Xi(F_{\theta})\}=\Upsilon(W,\Xi(F_{\theta})).
				\end{equation}
			    
			  With $\tfm$ as in~\eqref{eq:empmi}, triangle inequality gives
			    \begin{align*}
       \notag&\;\;\;\;d_{\ell}(\tfm,\mathfrak{B}_{\theta})\\ \notag&\leq d_{\ell}(\tfm,\tmln)+d_{\ell}(\tmln,\tlns)+d_{\ell}(\tlns,\tlnl)\\ \notag&\quad +d_{\ell}(\tlnl,\Upsilon(W,\tln))+d_{\ell}(\Upsilon(W,\tln),\mathfrak{B}_{\theta})\\
     &=d_{\ell}(\tfm,\tmln)+d_{\ell}(\tmln,\tlns)+d_{\ell}(\Upsilon(W_{Q_n},\tml),\Upsilon(W_{Q_n},\tln))\\ &\notag\quad +d_{\ell}(\Upsilon(W_{Q_n},\tln),\Upsilon(W,\tln)+d_{\ell}(\Upsilon(W,\tln),\Upsilon(W,\Xi(F_{\theta})),
			    \end{align*}
			    where the second equality uses~\eqref{eq:highordd1} and~\eqref{eq:highord2}. We now show that each of the terms on the right hand side converge to $0$ as we take limits with $n\to\infty$ first, followed by $L\to\infty$. Towards this direction, we observe that:
		\begin{align}\label{eq:lipcon}
			d_{\ell}\left(\tmln,\tlns\right)&=\sup_{f\in \mathrm{Lip}(1)}\bigg|\frac{1}{n}\sum_{i=1}^n f\left(\frac{i}{n},m_{i,R}\right)-\sum_{i=1}^n \int_{\frac{i-1}{n}}^{\frac{i}{n}} f(u,m_{i,R})\,du\bigg|\nonumber \\ &\leq \sup_{f\in\mathrm{Lip}(1)}\sum_{i=1}^n \int_{\frac{i-1}{n}}^{\frac{i}{n}}\bigg|f\left(\frac{i}{n},m_{i,R}\right)-f(u,m_{i,R})\bigg|\,du\leq\frac{1}{n}\to 0.
				\end{align}

\noindent Based on the above two displays, it now suffices to prove the following: 
			\begin{equation}\label{eq:prooflater}	 d_{\ell}(\tfm,\tmln)\overset{P}{\longrightarrow}0,
				\end{equation}

    \begin{equation}\label{eq:prooflater1}	 d_{\ell}(\Upsilon(W_{Q_n},\tml),\Upsilon(W_{Q_n},\tln))\overset{P}{\longrightarrow}0,
			\end{equation}
as $n\to\infty$ followed by $L\to\infty$, and
\begin{equation}\label{eq:prooflater2}	 d_{\ell}(\Upsilon(W_{Q_n},\tln),\Upsilon(W,\tln))\overset{P}{\longrightarrow}0,
				\end{equation}

as $n\to\infty$ for every fixed $L>0$, and 

    \begin{equation}\label{eq:prooflater3}	 d_{\ell}(\Upsilon(W,\tln),\Upsilon(W,\Xi(F_{\theta}))\overset{P}{\longrightarrow}0,
				\end{equation}				
    as $n\to\infty$, followed by $L\to\infty$. 
    
   \noindent We now split the proof into four parts, proving the four preceding displays. We begin with the proof of \eqref{eq:prooflater} which requires the following lemma. It is a variant of \cite[Lemma 2.7]{bhattacharya2024ldp}. We omit the details of the proof for brevity.

\begin{lmm}\label{lmm:un_vn_same}
Suppose $Q_n$ satisfies~\eqref{eq:q} for some $q>1$. Let $\varphi:\R^{v-1}\to [-L,L]$ for some $L>0$, and $\mathcal{S}(n,v,i)$ be as in \cref{def:ocalmag}.
Then given any permutation $\sigma$ of $[v]$, we get:
\begin{align*}
   \lim\limits_{n \rightarrow \infty} \frac{1}{n^v}\sup\limits_{\substack{(x_1,\ldots ,x_n)\\ \in \R^n}}\sum_{i_1=1}^n & \bigg\lvert   \sum_{\substack{(i_2,\ldots,i_v)\\ \in \mathcal{S}(n,v,i_1)}}\left(\prod_{(a,b)\in E(H)}Q_n(i_{\sigma(a)},i_{\sigma(b)})\right)\varphi(x_{i_2},\ldots ,x_{i_v})-\\ &\sum_{\substack{(i_2,\ldots,i_v)\\ \in [n]^{v-1}}}\left(\prod_{(a,b)\in E(H)}Q_n(i_{\sigma(a)},i_{\sigma(b)})\right)\varphi(x_{i_2},\ldots ,x_{i_v})\bigg\rvert= 0.
\end{align*}
\end{lmm}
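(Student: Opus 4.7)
The plan is to bound the difference by absorbing the $L^\infty$ bound on $\tilde{\phi}$ and then estimating the remaining sum of $|Q_n|$-products over ``degenerate'' index tuples via a contraction argument combined with \cref{lem:Tgraphon0} part~(i). Since $|\tilde{\phi}|\le L$, the triangle inequality gives, uniformly in $(x_1,\ldots,x_n)\in\R^n$,
\[
\frac{1}{n^v}\sum_{i_1=1}^n\bigg|\sum_{(i_2,\ldots,i_v)\in[n]^{v-1}\setminus\mathcal{S}(n,v,i_1)}\bigg(\prod_{(a,b)\in E(H)}Q_n(i_{\sigma(a)},i_{\sigma(b)})\bigg)\tilde{\phi}(x_{i_2},\ldots,x_{i_v})\bigg|\le \frac{L}{n^v}\sum_{\mathbf{i}\in F_n}\prod_{(a,b)\in E(H)}|Q_n(i_{\sigma(a)},i_{\sigma(b)})|,
\]
where $F_n\subseteq[n]^v$ denotes the set of $v$-tuples $(i_1,\ldots,i_v)$ with at least one pair of equal coordinates. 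It therefore suffices to show that the right-hand side tends to zero as $n\to\infty$.

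By the union bound, $F_n\subseteq\bigcup_{1\le a<b\le v}F_n^{(a,b)}$ with $F_n^{(a,b)}:=\{\mathbf{i}\in[n]^v:\,i_a=i_b\}$, so it is enough to prove, for each pair $\{a,b\}\subset[v]$, that
\[
T_n(a,b):=\frac{1}{n^v}\sum_{\mathbf{i}\in F_n^{(a,b)}}\prod_{(c,d)\in E(H)}|Q_n(i_{\sigma(c)},i_{\sigma(d)})|\longrightarrow 0.
\]
If there exists an edge $(c,d)\in E(H)$ with $\{\sigma(c),\sigma(d)\}=\{a,b\}$, then the constraint $i_a=i_b$ forces the factor $|Q_n(i_a,i_a)|=0$ into the product (by the vanishing-diagonal hypothesis on $Q_n$), and $T_n(a,b)$ vanishes identically. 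Otherwise, identifying position $b$ with position $a$ and reparametrizing the free indices by $\mathbf{j}\in[n]^{v-1}$, the inner product becomes $\prod_{e\in E(\bar{H}_{a,b})}|Q_n|_e(\mathbf{j})$, where $\bar{H}_{a,b}$ is the multigraph on $v-1$ vertices obtained from $H$ by identifying the vertices $\sigma^{-1}(a)$ and $\sigma^{-1}(b)$; by the case assumption, $\bar{H}_{a,b}$ has $|E(H)|$ edges, no self-loops, and maximum degree at most $2\Delta$.

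A multigraph extension of \cref{lem:Tgraphon0} part~(i), proved via a H\"older rearrangement on the graphon $W_{Q_n}$ that allocates integrability across the unique merged vertex and the remaining vertices, then yields
\[
\frac{1}{n^{v-1}}\sum_{\mathbf{j}\in[n]^{v-1}}\prod_{e\in E(\bar{H}_{a,b})}|Q_n|_e(\mathbf{j})\le C_H\,\|W_{Q_n}\|_{q\Delta}^{|E(H)|},
\]
for a constant $C_H$ depending only on $H$. Under \eqref{eq:q} the right-hand side is bounded uniformly in $n$, so $T_n(a,b)=O(1/n)$; summing over the $\binom{v}{2}$ pairs $\{a,b\}$ produces an $O(1/n)$ bound on the entire quantity, uniformly in $(x_1,\ldots,x_n)$, and the lemma follows. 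The main technical step, which I expect to require the most care, is the displayed counting estimate: because the contracted multigraph can have maximum degree $2\Delta>\Delta$, the H\"older rearrangement must concentrate the integrability burden on the merged vertex so that only the $L^{q\Delta}$-norm made available by~\eqref{eq:q} is invoked, using the vanishing-diagonal hypothesis to rule out the only potential self-loop.
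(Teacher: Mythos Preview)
Your reduction is sound up to the point where you invoke the multigraph counting estimate: absorbing $|\tilde\phi|\le L$, splitting over pairs $\{a,b\}$, and disposing of the case $\sigma^{-1}(a)\sim\sigma^{-1}(b)$ via the zero diagonal are all correct. The gap is the displayed bound
\[
\frac{1}{n^{v-1}}\sum_{\mathbf j\in[n]^{v-1}}\prod_{e\in E(\bar H_{a,b})}|Q_n|_e(\mathbf j)\le C_H\,\|W_{Q_n}\|_{q\Delta}^{|E(H)|},
\]
which is \emph{false} for general $q>1$. Take $H$ to be two disjoint edges on $\{1,2,3,4\}$, so $\Delta=1$ and $q\Delta=q$; contracting the non-adjacent positions $1$ and $3$ (with $\sigma=\mathrm{id}$) produces a two-edge path, and the left side becomes
\[
\frac{1}{n^{3}}\sum_{i,j,k}|Q_n(i,j)|\,|Q_n(i,k)|=\int_0^1\Big(\int_0^1|W_{Q_n}(x,y)|\,dy\Big)^{2}dx.
\]
For $Q_n(i,j)=(i/n)^{-\alpha}+(j/n)^{-\alpha}$ ($i\ne j$) with $\tfrac12<\alpha<\tfrac1q$ (possible whenever $1<q<2$) one has $\sup_n\|W_{Q_n}\|_{q}<\infty$, yet the integral above is $\asymp n^{2\alpha-1}\to\infty$. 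No H\"older rearrangement can salvage this: the merged vertex has degree $2$ while only an $L^{q}$-norm with $q<2$ is available, and the map $W\mapsto\int|W(\cdot,y)|\,dy$ is not bounded from $L^q$ to $L^2$. In this example $T_n(a,b)\asymp n^{2\alpha-2}$ still tends to zero, but not at rate $O(1/n)$ and not by your mechanism; for more complicated $H$ the same obstruction blocks the argument entirely.

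The paper (following \cite[Lemma~2.7]{bhattacharya2022ldp}) avoids contracted graphs altogether by truncating first: set $Q_{n,M}:=Q_n\mathbf 1\{|Q_n|\le M\}$. For the truncated matrix each $E(H)$-product is at most $M^{|E(H)|}$, so the $O(n^{v-1})$ degenerate tuples contribute at most $C_vLM^{|E(H)|}/n$. The truncation error is then bounded via the \emph{standard} counting lemma for $H$ itself (\cref{lem:Tgraphon0}(i) applied to $1\vee|Q_n|^{1+\delta}$ with exponent $\bar q=q/(1+\delta)$), giving a term $\lesssim L\,M^{-\delta}$ uniformly in $n$. Letting $n\to\infty$ and then $M\to\infty$ finishes the proof without ever encountering a vertex of degree larger than $\Delta$.
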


\begin{proof}[Proof of \eqref{eq:prooflater}]

   With $\mathcal{S}(n,v,i)$ as in \cref{def:ocalmag}, define
			\begin{equation}\label{eq:mitrun}
			m_i^{(L)}:=\frac{v}{n^{v-1}} \sum_{(i_2,\ldots ,i_v)\in \mathcal{S}(n,v,i)} \tQh\left(i,i_2,\ldots ,i_v\right)\left(\prod_{a=2}^v X_{i_a}^{(L)}\right).
			\end{equation}
			It then suffices to prove the following:
			\begin{equation}\label{eq:show1}
			  \lim\limits_{L\to\infty}\limsup\limits_{n\to\infty}\P\left(n^{-1}\sum_{i=1}^n |m_i-m_i^{(L)}|\geq \epsilon\right)=0,
			\end{equation}
   \begin{equation}\label{eq:show3}
			  \lim\limits_{L\to\infty}\limsup\limits_{n\to\infty}\P\left(n^{-1}\sum_{i=1}^n |m_{i,R}-m_{i,R}^{(L)}|\geq \epsilon\right)=0,
			\end{equation}
			 for any $\epsilon>0$, and for any fixed $L>0$,
			\begin{equation}\label{eq:show2}
		     \frac{1}{n}\sum_{i\in [n]} |m_i^{(L)}-m_{i,R}^{(L)}|\xrightarrow{P} 0.
			\end{equation}
			
			\noindent\emph{Proof of~\eqref{eq:show1}} 
   Fix $\tp\in (1,p)$ such that $\tp^{-1}+q^{-1}<1$. For any set $A \subseteq \{2, \ldots, v\}$, define $A^c:= \{2, \ldots, v\} \setminus A$. 
   For any $L>1$ we have
			\begin{small}
			\begin{align}\label{eq:small}
		&\;\;\;\;\frac{1}{n}\sum_{i=1}^n |m_i-m_i^{(L)}|\nonumber\\ &\le \frac{v}{n^v}\sum_{A\subseteq \{2,\ldots ,v\},\ |A|\geq 1} \sum_{(i_1,\ldots ,i_v)\in [n]^v} \big|\tQh\left(i_1,i_2,\ldots ,i_v\right)\big|\left(\prod_{a\in A} |X_{i_a}-X_{i_a}^{(L)}|\right)\left(\prod_{a\in A^c} |X_{i_a}^{(L)}|\right)\nonumber \\
   &\le v\sum_{A\subseteq \{2,\ldots ,v\},\ |A|\geq 1} \left[\frac{1}{n^v}\sum_{(i_1,\ldots ,i_v)\in [n]^v} \big|\tQh\left(i_1,i_2,\ldots ,i_v\right)\big|^q\right]^{\frac{1}{q}} \nonumber\\
   &\; \; \; \; \; \;\; \; \; \; \; \;\; \; \; \;\;\; \left(\prod_{a\in A}\left(\frac{1}{n}\sum_{i_a=1}^n |X_{i_a}|^{\tp}\mathbf{1}(|X_{i_a}|> L)\right)\right)^{\frac{1}{\tp}}\left(\prod_{a\in A^c}\left(\frac{1}{n}\sum_{i_a=1}^n |X_{i_a}|^{\tp}\mathbf{1}(|X_{i_a}|\le  L)\right)\right)^{\frac{1}{\tp}}\nonumber \\
   & \le 
   v 2^{v-1} L^{\tp - p}\lVert W_{Q_n}\rVert_{q\Delta}^{|E(H)|}\left(1+\frac{1}{n}\sum_{i=1}^n |X_i|^p\right)^{\frac{v-1}{\tp}}.
			\end{align}
			\end{small}
   as $n\to\infty$, followed by $L\to\infty$. Here the second line uses~\eqref{eq:mitrun},  the third line uses H\"older's inequality, and the fourth inequality follows from
~\cref{lem:Tgraphon0} part (iii) along with the inequalities $$|x|^{\tp}\mathbf{1}(|x|> L)\leq  L^{\tp-p}(1+|x|^p),\quad |x|^{\tp}\mathbf{1}(|x|\le L)\le 1+|x|^p.$$ The conclusion holds on noting that the RHS of \eqref{eq:small} converges to $0$ on letting $n\to\infty$ followed by $L\to\infty$,  since $\lVert W_{Q_n}\rVert_{q \Delta}=O(1)$ and $\fmm_p(\tml)=O_p(1)$  (which are direct consequences of~\eqref{eq:q} and ~\eqref{eq:lip11} respectively). This proves~\eqref{eq:show1}.
			
			\vspace{0.1in}

\noindent\emph{Proof of \eqref{eq:show3}.} The proof is same as that of \eqref{eq:show1}. We skip the details for brevity.

   \vspace{0.1in}
   
			\noindent\emph{Proof of~\eqref{eq:show2}.}
Using \eqref{eq:needagain} and \eqref{eq:mitrun}, observe that 
\begin{align*}
    &\frac{1}{n}\sum_{i_1=1}^n |m_{i_1}^{(L)}-m_{i_1,R}^{(L)}|\\ &=\frac{1}{n^v}\sum_{i_1=1}^n \Bigg|\frac{1}{v!}\sum_{\sigma\in \mathcal{S}_v}\Bigg[\sum_{\substack{(i_2,\ldots,i_v)\\ \in \mathcal{S}(n,v,i_1)}}\left(\prod_{(a,b)\in E(H)}Q_n(i_{\sigma(a)},i_{\sigma(b)})\right)\prod_{a=2}^v X_{i_a}^{(L)}\\ &\qquad -\sum_{\substack{(i_2,\ldots,i_v)\\ \in [n]^{v-1}}}\left(\prod_{(a,b)\in E(H)}Q_n(i_{\sigma(a)},i_{\sigma(b)})\right)\prod_{a=2}^v X_{i_a}^{(L)}\Bigg]\Bigg|\\ &\le \frac{1}{n^v}\max_{\sigma\in \mathcal{S}_v}\sum_{i_1=1}^n \bigg\lvert   \sum_{\substack{(i_2,\ldots,i_v)\\ \in \mathcal{S}(n,v,i_1)}}\left(\prod_{(a,b)\in E(H)}Q_n(i_{\sigma(a)},i_{\sigma(b)})\right)\prod_{a=2}^v X_{i_a}^{(L)}-\\ &\sum_{\substack{(i_2,\ldots,i_v)\\ \in [n]^{v-1}}}\left(\prod_{(a,b)\in E(H)}Q_n(i_{\sigma(a)},i_{\sigma(b)})\right)\prod_{a=2}^v X_{i_a}^{(L)}\bigg\rvert.
\end{align*}
 The RHS above converges to $0$ as $n\to\infty$, using~\cref{lmm:un_vn_same} with $\varphi(x_1, \ldots, x_{v})= \prod_{a=2}^{v} x^{(L)}_a$ along with triangle inequality.

   \end{proof}

   In order to prove \eqref{eq:prooflater1} and \eqref{eq:prooflater2}, we need the following additional lemma whose proof we defer to \cref{sec:fifth}.
   
			\begin{lmm}\label{lem:pivotlem}
				Fix a graph $H$ with $v$ vertices and maximum degree $\Delta$ as before. Fix $p,q>0$ such that $\frac{1}{p}+\frac{1}{q}<1$, $p\ge v$. Then $\Upsilon(\cdot,\cdot)$ is well-defined on $\mathcal{R}$, and the following conclusions hold:
				\begin{enumerate}
					\item[(i)] Fix $C>0$. Then
					$$\lim\limits_{L\to\infty}\sup_{\nu\in\wmm:\ \fmm_p(\nu)\leq C}\sup_{W\in\mathcal{W}:\ \lVert W\rVert_{q\Delta}\leq C} d_{\ell}(\Upsilon(W,\nu),\Upsilon(W,\nu^{(L)}))=0.$$
					\item[(ii)] Suppose $W_k,W_{\infty}\in\mathcal{W}$, $k\geq 1$ such that $d_{\square}(W_k,W_{\infty})\to 0$ as $k\to\infty$, and $\sup_{1 \le k \le \infty}\lVert W_k\rVert_{q\Delta}<\infty$ for $1\leq k\leq \infty$. Fix $L\in (0,\infty)$ and let $\wmm^{(L)}$ denote the subset of $\wmm$ for which the second marginal is compactly supported on $[-L,L]$. Then we have 
					$$\lim\limits_{k\to\infty}\sup_{\nu\in\wmm^{(L)}}d_{\ell}(\Upsilon(W_k,\nu),\Upsilon(W_{\infty},\nu))=0.$$
					\item[(iii)] Fix $W\in \mathcal{W}$ such that $\lVert W\rVert_{q\Delta}<\infty$, and let $\nu_k,\nu_{\infty}\in\wmm^{(L)}$ such that $d_{\ell}(\nu_k,\nu_{\infty})\to 0$. Then,
					$$\lim\limits_{k\to\infty} d_{\ell}(\Upsilon(W,\nu_k),\Upsilon(W,\nu_{\infty}))=0.$$
				\end{enumerate}
				
			\end{lmm}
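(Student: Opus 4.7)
All three parts share a common reduction. For any $1$-Lipschitz, $[-1,1]$-valued $f$ on $[0,1]\times\R$, one has $|f(u,x)-f(u,y)|\le |x-y|$, so
$$d_\ell(\Upsilon(W,\nu),\Upsilon(W',\nu')) \le \int_0^1 \bigl|\vartheta_{W,\nu}(u)-\vartheta_{W',\nu'}(u)\bigr|\, du.$$
By duality this right-hand side equals $\sup_g \int_0^1 g(u)\bigl[\vartheta_{W,\nu}(u)-\vartheta_{W',\nu'}(u)\bigr]\,du$ over measurable $g:[0,1]\to[-1,1]$; using $U_1\sim\mathrm{Unif}[0,1]$ under $\nu$, the first summand rewrites as $v\,\E_{\nu^{\otimes v}}\!\bigl[g(U_1)\,\mathrm{Sym}[W](U_1,\ldots,U_v)\prod_{a=2}^v V_a\bigr]$, and analogously for $(W',\nu')$. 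Each of the three parts amounts to estimating such an expectation. Well-definedness of $\Upsilon$ on $\mr$ is immediate from \cref{lem:Tgraphon0}(ii) applied with $\phi(v_1,\ldots,v_v)=\prod_{a=2}^v v_a$.

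For part (i), expand $\prod_{a=2}^v V_a - \prod_{a=2}^v V_a^{(L)}$ as a telescoping sum over non-empty $A\subseteq\{2,\ldots,v\}$, exactly as in the bound \eqref{eq:small} used in the proof of \eqref{eq:show1}. Picking $\tilde p\in(1,p)$ with $\tilde p^{-1}+q^{-1}<1$ and applying H\"older's inequality together with \cref{lem:Tgraphon0}(iii) to each summand yields an upper bound of the form $C_v \|W\|_{q\Delta}^{|E(H)|} L^{\tilde p-p}(1+\fmm_p(\nu))^{(v-1)/\tilde p}$, which vanishes uniformly over $\|W\|_{q\Delta}\le C$ and $\fmm_p(\nu)\le C$ as $L\to\infty$, since $\tilde p<p$.

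For part (ii), telescope $\prod_{(a,b)\in E(H)} W_k(x_{\sigma(a)},x_{\sigma(b)}) - \prod_{(a,b)\in E(H)} W_\infty(x_{\sigma(a)},x_{\sigma(b)})$ one edge at a time, so each resulting summand contains exactly one factor of $W_k-W_\infty$ together with $|E(H)|-1$ factors among $\{W_k,W_\infty\}$. After inserting the dual weight $g(U_1)$ and taking the conditional expectation over $(V_2,\ldots,V_v)$ given $(U_2,\ldots,U_v)$, each $V_a$ is replaced by $\phi_\nu(U_a):=\E_\nu[V\,|\,U=U_a]$, which is uniformly bounded by $L$ since $\nu\in\wmm^{(L)}$. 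Each telescoping summand thus becomes a vertex-weighted homomorphism integral over $H$ with exactly one edge labeled $W_k-W_\infty$ and all other edges carrying $W_k$ or $W_\infty$. Combining $d_\square(W_k,W_\infty)\to 0$ with the uniform $\|\cdot\|_{q\Delta}$ bound and the counting lemma for $L^p$ graphons (cf.~\cite[Theorem 2.13]{borgs2018p}) then shows each term vanishes uniformly over $\nu\in\wmm^{(L)}$. The main technicality here is that the standard $L^p$ counting lemma is formulated for edge weights only, and must be adapted to allow bounded vertex weights; this is handled by absorbing the vertex weights into a modified graphon on $H$ via a H\"older-type argument.

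For part (iii), $d_\ell$-convergence of $\nu_k$ to $\nu_\infty$ together with second marginals contained in $[-L,L]$ gives weak convergence of $\nu_k^{\otimes v}$ to $\nu_\infty^{\otimes v}$ on the compact space $([0,1]\times[-L,L])^v$. First truncate $W$ to $W^{(M)}:=W\,\mathbf{1}(|W|\le M)$, then approximate $W^{(M)}$ in $L^{q\Delta}([0,1]^2)$ by a bounded continuous $W^{(M,\varepsilon)}$. By H\"older with \cref{lem:Tgraphon0}(iii), the errors in replacing $W$ by $W^{(M,\varepsilon)}$ inside $\vartheta$ are bounded, uniformly over $\nu\in\wmm^{(L)}$, by $C_v L^{v-1}\bigl(\|W-W^{(M)}\|_{q\Delta}+\|W^{(M)}-W^{(M,\varepsilon)}\|_{q\Delta}\bigr)^{|E(H)|}$, which vanishes on letting $M\to\infty$ then $\varepsilon\to 0$. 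For the bounded continuous surrogate $W^{(M,\varepsilon)}$, weak convergence of $\nu_k^{\otimes(v-1)}$ yields pointwise convergence $\vartheta_{W^{(M,\varepsilon)},\nu_k}(u)\to\vartheta_{W^{(M,\varepsilon)},\nu_\infty}(u)$ for each $u\in[0,1]$; as these functions are uniformly bounded by $\|W^{(M,\varepsilon)}\|_\infty L^{v-1}$, the dominated convergence theorem upgrades pointwise to $L^1(du)$-convergence, and a triangle inequality completes the argument. The chief obstacle throughout is controlling approximation errors \emph{uniformly} in $\nu$ (part (iii)) and \emph{uniformly} over edge-placement in the telescope (part (ii)); both ultimately rely on the $L^{q\Delta}$ version of the counting/H\"older bound encoded in \cref{lem:Tgraphon0}.
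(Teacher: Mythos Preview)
Your proposal is correct; parts (i) and (iii) align closely with the paper's argument, while your part (ii) takes a genuinely shorter route.

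For part (ii), the paper does not telescope edge-by-edge directly. Instead it first introduces a truncation indicator $c_k^{(T)}(u)=\mathbf{1}\{\mathfrak{R}(u;W_k)\le T,\ \mathfrak{R}(u;W_\infty)\le T\}$ (where $\mathfrak{R}(u;W)=\E[\mathrm{Sym}[|W|](u,U_2,\ldots,U_v)]$), shows the truncation error is negligible in $L^1$ uniformly over $\nu$, and then for the truncated functions proves convergence of \emph{all moments} $\int_0^1(\vartheta_{W_k,\nu}(u)c_k^{(T)}(u))^r\,du$. Only inside the moment computation does the paper finally telescope $\mathrm{Sym}[W_k]-\mathrm{Sym}[W_\infty]$ and invoke the vertex-weighted cut lemma (cited there as \cite[Proposition~3.1(ii)]{bhattacharya2022ldp}); the point of the truncation is that the auxiliary weight at vertex $u_1$ arising from the $r$-th power is then explicitly bounded by $(L^{v-1}T)^{r-1}$. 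Your observation is that this machinery is unnecessary: after the $L^1$/duality reduction the weight at $u_1$ is simply the dual function $g$ with $|g|\le 1$, so one may apply the same vertex-weighted cut lemma immediately at $r=1$, bypassing both the truncation and the method of moments. This is a legitimate simplification. Note, however, that the reference you give (\cite[Theorem~2.13]{borgs2018p}) is the statement about lower-semicontinuity of $\|\cdot\|_{q\Delta}$ under cut limits, not the vertex-weighted counting lemma you actually need; the correct black box is the one the paper invokes, \cite[Proposition~3.1(ii)]{bhattacharya2022ldp}, which already accommodates bounded vertex weights and unbounded $L^{q\Delta}$ edge weights and gives the required uniformity over $g$ and $h_\nu$.

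Two minor corrections: in part (iii), the approximation error from replacing $W$ by $W^{(M,\varepsilon)}$ is linear in $\|W-W^{(M,\varepsilon)}\|_{q\Delta}$ (via an edge telescope), not of order $\|\cdot\|_{q\Delta}^{|E(H)|}$; this does not affect the conclusion since the base tends to zero. Also in part (iii) the paper, rather than bounding the approximation error directly by H\"older, simply appeals to part (ii) (with $W_m\to W$ in $L^{q\Delta}$, hence in cut distance, and uniformly bounded norms) to control $\sup_{k}d_\ell(\Upsilon(W,\nu_k),\Upsilon(W_m,\nu_k))$; your direct bound is equivalent.
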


\begin{proof}[Proof of \eqref{eq:prooflater1}]
    We can use~\cref{lem:pivotlem} part (i) to get the desired conclusion provided we can show $\lVert W_{Q_n}\rVert_{q\Delta}=O(1)$ and $\fmm_p(\tml)=O_p(1)$ (these requirements follow from the definition of $\mathcal{R}$, see~\cref{def:pivotlem}). But these are direct consequences of~\eqref{eq:q}~and~\eqref{eq:lip11} respectively.
\end{proof}

\begin{proof}[Proof of \eqref{eq:prooflater2}]
     We can use~\cref{lem:pivotlem} part (ii) to get the desired conclusion, if we can verify that $d_{\square}(W_{Q_n},W)\to 0$,  $\lVert W_{Q_n}\rVert_{q\Delta}=O(1)$, and  $\lVert W\rVert_{q\Delta}=O(1)$. But these are direct consequences of~\eqref{eq:cut_con},~\eqref{eq:q}, and~\eqref{eq:W_q} respectively.
\end{proof}

The final step is to establish \eqref{eq:prooflater3} for which we need two results. The first one is an immediate corollary of \cref{lem:pivotlem} parts (i) and (iii) (and hence its proof is omitted), while the second one is a simple convergence lemma, whose proof is provided in \cref{sec:appenaux}. 
\begin{cor}\label{cor:Upcont}
Consider the same setting as in \cref{lem:pivotlem}. For $C>0$, define
\begin{align}\label{eq:deftilde}
\widetilde{\mathcal{M}}_{p,C}:=\{\nu\in\widetilde{\mathcal{M}}_p:\ \fmm_p(\nu)\le C\}.
\end{align}
Suppose $W\in \mathcal{W}$ be such that $\lVert W\rVert_{q\Delta}<\infty$, then $\Upsilon(W,\cdot)$ is continuous on $\widetilde{\mathcal{M}}_{p,C}$ in the weak topology.
\end{cor}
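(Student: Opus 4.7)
The plan is to reduce the continuity assertion to \cref{lem:pivotlem} parts (i) and (iii) via a truncation argument that decouples their two different continuity modes. Fix $W\in\mathcal{W}$ with $\lVert W\rVert_{q\Delta}<\infty$, set $C' := \max(\lVert W\rVert_{q\Delta}, C)$, and consider an arbitrary sequence $\{\nu_k\}_{k\ge 1}\subset \widetilde{\mathcal{M}}_{p,C}$ converging weakly to some $\nu_\infty\in\widetilde{\mathcal{M}}_{p,C}$. The goal is to show $d_{\ell}(\Upsilon(W,\nu_k),\Upsilon(W,\nu_\infty))\to 0$ as $k\to\infty$.

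Given $\varepsilon>0$, first apply \cref{lem:pivotlem} part (i) with the constant $C'$ to select $L_0$ so that for every $L\ge L_0$,
$$\sup_{\nu\in \widetilde{\mathcal{M}},\; \fmm_p(\nu)\le C'} d_{\ell}\bigl(\Upsilon(W,\nu),\Upsilon(W,\nu^{(L)})\bigr) < \frac{\varepsilon}{3}.$$
Since $\nu_k,\nu_\infty\in\widetilde{\mathcal{M}}_{p,C}\subseteq\{\nu:\fmm_p(\nu)\le C'\}$, this bound controls the truncation errors both for every $\nu_k$ and for $\nu_\infty$ uniformly.

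Next, I pick a specific $L\ge L_0$ that is a continuity point of the second marginal of $\nu_\infty$; such an $L$ exists because that marginal has at most countably many atoms. The truncation map $\phi_L:(u,v)\mapsto (u,v\mathbf{1}(|v|\le L))$ is continuous off the set $\{|v|=L\}$, which carries zero $\nu_\infty$-mass by the choice of $L$, so the continuous mapping theorem for weak convergence gives $\nu_k^{(L)}=\phi_L\#\nu_k\to \phi_L\#\nu_\infty=\nu_\infty^{(L)}$ weakly. All these truncated measures are supported on the compact rectangle $[0,1]\times[-L,L]$, where $d_{\ell}$ metrizes weak convergence, so $d_{\ell}(\nu_k^{(L)},\nu_\infty^{(L)})\to 0$. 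Invoking \cref{lem:pivotlem} part (iii) then yields $d_{\ell}(\Upsilon(W,\nu_k^{(L)}),\Upsilon(W,\nu_\infty^{(L)}))\to 0$, so this quantity is eventually below $\varepsilon/3$.

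Assembling the three estimates through the triangle inequality,
$$\begin{aligned} d_{\ell}(\Upsilon(W,\nu_k),\Upsilon(W,\nu_\infty)) &\le d_{\ell}\bigl(\Upsilon(W,\nu_k),\Upsilon(W,\nu_k^{(L)})\bigr) + d_{\ell}\bigl(\Upsilon(W,\nu_k^{(L)}),\Upsilon(W,\nu_\infty^{(L)})\bigr) \\ &\quad + d_{\ell}\bigl(\Upsilon(W,\nu_\infty^{(L)}),\Upsilon(W,\nu_\infty)\bigr) < \varepsilon \end{aligned}$$
for all sufficiently large $k$, which establishes sequential (hence topological) continuity since the weak topology on $\widetilde{\mathcal{M}}_{p,C}$ is metrizable. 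The principal technical point is selecting the truncation level $L$: the map $\phi_L$ is genuinely discontinuous, so preserving weak convergence requires avoiding the atoms of the limiting second marginal. The uniform $p$-th moment bound inherent in $\widetilde{\mathcal{M}}_{p,C}$ is what lets the tail contribution be suppressed uniformly across the whole sequence, thereby decoupling the truncation step from the subsequent convergence step and allowing parts (i) and (iii) of \cref{lem:pivotlem} to be applied in tandem.
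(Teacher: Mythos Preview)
Your proof is correct and follows exactly the approach the paper indicates: the paper states the corollary is ``an immediate corollary of \cref{lem:pivotlem} parts (i) and (iii) (and hence its proof is omitted).'' You have supplied precisely the standard triangle-inequality argument combining those two parts, including the necessary technical detail---choosing $L$ as a continuity point of the limiting second marginal so that the truncation map preserves weak convergence---that the paper leaves implicit.
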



\begin{lmm}\label{lem:proj_cont_map}
          Suppose $(X,d_X)$ and $(Y,d_Y)$ be two Polish spaces. Let $\xi_n$ be a sequence of $X$-valued random variables such that $d_X(\xi_n,\mf) \xrightarrow{\text{P}}0$ for some closed set $\mf\subseteq X$. Assume that there exists a compact set $K\subseteq X$ such that 
          \begin{align}\label{eq:tight}
          \lim\limits_{n\to\infty} \P(\xi_n\notin K)=0.
          \end{align}
          Finally consider a function $g:X \mapsto Y$ such that $g$ is continuous on $K$. 
          Then we have
          $$d_Y(g(\xi_n),g(\mf)) \xrightarrow{\text{P}} 0.$$
 \end{lmm}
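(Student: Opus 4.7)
The plan is to reduce the stochastic statement to a purely deterministic claim, and then prove the deterministic claim by a compactness argument.

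\textbf{Deterministic claim.} For every $\epsilon>0$, there exists $\delta>0$ such that for every $\xi\in K$ with $d_X(\xi,\mf)<\delta$, one has $d_Y(g(\xi),g(\mf))<\epsilon$. Once this is in hand, the conclusion follows immediately by a union bound: for any $\epsilon>0$ and the $\delta>0$ provided by the claim,
\[
\P\bigl(d_Y(g(\xi_n),g(\mf))>\epsilon\bigr) \;\leq\; \P(\xi_n\notin K) \;+\; \P\bigl(d_X(\xi_n,\mf)\geq\delta\bigr),
\]
and both terms tend to $0$ by the hypotheses $\P(\xi_n\notin K)\to 0$ and $d_X(\xi_n,\mf)\xrightarrow{P}0$.

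\textbf{Proof of the deterministic claim.} I would argue by contradiction. Suppose the claim fails for some $\epsilon>0$; then there exists a sequence $\{\eta_k\}\subseteq K$ with $d_X(\eta_k,\mf)\to 0$ but $d_Y(g(\eta_k),g(\mf))\geq\epsilon$ for every $k$. Since $K$ is a compact subset of the metric space $(X,d_X)$, it is sequentially compact, so along a subsequence (which I relabel) $\eta_k\to \eta^\ast\in K$. Because $d_X(\eta_k,\mf)\to 0$ and $\mf$ is closed, the limit satisfies $\eta^\ast\in\mf$; thus $\eta^\ast\in K\cap\mf$. The function $g$ is continuous on $K$, so $g(\eta_k)\to g(\eta^\ast)$, and since $\eta^\ast\in\mf$ we have $g(\eta^\ast)\in g(\mf)$. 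Consequently $d_Y(g(\eta_k),g(\mf))\leq d_Y(g(\eta_k),g(\eta^\ast))\to 0$, contradicting the assumption that $d_Y(g(\eta_k),g(\mf))\geq\epsilon$ for all $k$.

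\textbf{Main point of care.} The only subtlety is that $g$ is assumed continuous only on $K$, so one cannot invoke a naive continuous mapping theorem to transfer $d_X(\xi_n,\mf)\xrightarrow{P}0$ into $d_Y(g(\xi_n),g(\mf))\xrightarrow{P}0$ by composition — the behaviour of $g$ off $K$ is unrestricted, and approximants to $\xi_n$ inside $\mf$ need not themselves lie in $K$. Compactness of $K$ is what resolves this: any subsequential limit of a $K$-valued sequence whose distance to the closed set $\mf$ vanishes must lie in the intersection $K\cap\mf$, where $g$ is well behaved. This converts the approximation of $\xi_n$ by points of $\mf$ (which may be outside $K$) into an approximation by points of $g(K\cap\mf)\subseteq g(\mf)$, which is precisely what is needed.
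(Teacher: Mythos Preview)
Your proof is correct and takes a genuinely different route from the paper. The paper argues via weak convergence: it uses Prokhorov's theorem to extract a subsequential distributional limit $\xi_\infty$ supported on $\mf\cap K$, then establishes $g(\xi_n)\stackrel{d}{\to} g(\xi_\infty)$ through a Portmanteau argument exploiting that $g^{-1}(H)\cap K$ is closed for closed $H$, and finally concludes by evaluating $\P(d_Y(g(\xi_\infty),g(\mf))\geq\varepsilon)=0$. Your approach instead isolates a purely deterministic uniform-continuity statement on $K$ and dispatches it by sequential compactness, after which the probabilistic conclusion is immediate from a one-line union bound. This is more elementary: it avoids weak convergence machinery entirely, does not require passing to subsequences at the probabilistic level, and in fact never uses that $X$ or $Y$ is Polish (only that $K$ is a compact metric space). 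The paper's route, by contrast, yields as a byproduct the distributional convergence $g(\xi_n)\stackrel{d}{\to} g(\xi_\infty)$ along subsequences, which is slightly more information but is not needed for the stated conclusion.
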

\begin{proof}[Proof of \eqref{eq:prooflater3}]
                
   Applying~\cref{lem:pivotlem} part (i), for every $\varepsilon>0$ we have
$$\lim_{L\to\infty}\limsup_{n\to\infty}\;\;\P\left(d_{\ell}(\Upsilon(W,\tln),\Upsilon(W,\tml))\geq \varepsilon\right)=0.$$
			It thus suffices to show that
			\begin{equation}\label{eq:target}d_{\ell}(\Upsilon(W,\tml),\Upsilon(W,\Xi(F_{\theta})))\overset{P}{\longrightarrow}0.\end{equation}
To this effect, use~\cref{prop:freen} part (iii) to note that \begin{align}\label{eq:vercond0}d_\ell(\tml, \Xi(F_\theta))\stackrel{P}{\to}0,\end{align} where the set $\Xi(F_\theta)$ is compact in the weak topology. Also note that by \eqref{eq:lip11}, there exists $C>0$ such that 
\begin{align}\label{eq:vercond1}
\lim\limits_{n\to\infty}\P(\tml\notin\widetilde{\mathcal{M}}_{p,C})=0.
\end{align}
We will now invoke~\cref{lem:proj_cont_map} with $X=\widetilde{\mathcal{M}}_p$ and $Y=\mathcal{M}$, both coupled with  weak topology, $\xi_n=\tml$, $\mf=\Xi(F_{\theta})$, $K=\widetilde{\mathcal{M}}_{p,C}$ and $g(\cdot)=\Upsilon(W,\cdot)$. Once we verify the conditions of \cref{lem:proj_cont_map} with the above specifications, we will then conclude~\eqref{eq:target}, which in turn, completes the proof. 

To verify the conditions of \cref{lem:proj_cont_map}, note that $\mf=\Xi(F_{\theta})$ is compact, and is a subset of $X=\widetilde{\mathcal{M}}_p$ by \eqref{eq:lip12}. Further, \eqref{eq:vercond0} implies $d_X(\xi_n,\mf)\overset{P}{\to} 0$. The conclusion in \eqref{eq:vercond1} implies \eqref{eq:tight}.  The fact that $g(\cdot)=\Upsilon(W,\cdot)$ is well-defined on $X$ follows from \cref{lem:wealimi} part (a) (also see \cref{def:pivotlem}). Finally, the continuity of $g$ on $K$ follows from \cref{cor:Upcont}.

This finally completes the proof of \cref{lem:wealimi}.
\end{proof}

\subsection{Proofs of \cref{cor:conmeanint} and \cref{prop:higherord}}

In order to prove \cref{cor:conmeanint}, we need the following results. The first result is a lemma about a sequence of functions converging in measure. Its proof is deferred to \cref{sec:appenaux}.
     
\begin{lmm}\label{lem:specond}
Let $U\sim\mathrm{Unif}[0,1]$ and $p\ge 1$.
\begin{enumerate}
\item[(i)]
 Suppose $\{f_n\}_{n\geq 1}$ is a sequence of measurable real-valued functions on $[0,1]$ such that $$\limsup_{n\to\infty}\E |f_n(U)|^p<\infty,\text{ and } (U,f_n(U))\overset{D}{\longrightarrow} (U,f_{\infty}(U)).$$ Then for any $\tp\in (0,p)$ we have:
\begin{equation}\label{eq:showcon}
\E|f_n(U)-f_{\infty}(U)|^{\tp}\longrightarrow 0.
\end{equation}
\item[(ii)]
If $(U,f(U))\overset{D}{=}(U,g(U))$ for some $f,g$ such that $\E |f(U)|^p<\infty$ and $\E|g(U)|^p<\infty$, then $f(U)=g(U)$ a.s.
\end{enumerate}
\end{lmm}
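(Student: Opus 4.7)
The strategy for part (i) is two-step: first upgrade the joint weak convergence $(U,f_n(U))\overset{D}{\longrightarrow}(U,f_\infty(U))$ to convergence of $f_n$ to $f_\infty$ in Lebesgue measure on $[0,1]$, then combine this with uniform integrability to obtain $L^{\tp}$ convergence. As a preliminary observation, joint weak convergence implies marginal weak convergence $f_n(U)\overset{D}{\longrightarrow} f_\infty(U)$, and the uniform $L^p$ bound propagates to the limit via Fatou's lemma (applied after a Skorohod representation, or by Portmanteau to the bounded continuous truncations $|x|^p\wedge M$ followed by $M\to\infty$). Hence $f_\infty\in L^p([0,1],\lambda)$, where $\lambda$ denotes Lebesgue measure.

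The main obstacle in the first step is that $f_\infty$ need not be continuous, so the ``graph neighborhood'' $\{(u,v):|v-f_\infty(u)|<\epsilon\}$ is not necessarily open and the Portmanteau theorem does not apply directly. To circumvent this, fix $\epsilon,\delta>0$ and use density of continuous functions in $L^p([0,1],\lambda)$ (alternatively, Lusin's theorem) to choose a continuous $g:[0,1]\to\R$ satisfying $\lambda(\{|f_\infty-g|\geq \epsilon/3\})<\delta$. Then $O:=\{(u,v):|v-g(u)|<2\epsilon/3\}$ is open, and Portmanteau gives
\[
\liminf_n \lambda\bigl(\{u:|f_n(u)-g(u)|<2\epsilon/3\}\bigr)\ \ge\ \lambda\bigl(\{u:|f_\infty(u)-g(u)|<2\epsilon/3\}\bigr)\ \ge\ 1-\delta.
\]
The triangle inequality inclusion $\{|f_n-f_\infty|\geq\epsilon\}\subseteq \{|f_n-g|\geq 2\epsilon/3\}\cup\{|g-f_\infty|\geq\epsilon/3\}$ then yields $\limsup_n \lambda(\{|f_n-f_\infty|\geq\epsilon\})\leq 2\delta$, and letting $\delta\downarrow 0$ shows $f_n\to f_\infty$ in Lebesgue measure.

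For the second step, observe that by the elementary inequality $|a-b|^p\le 2^{p-1}(|a|^p+|b|^p)$,
\[
\sup_{n\ge 1}\E|f_n(U)-f_\infty(U)|^p\ \leq\ 2^{p-1}\Bigl(\sup_{n\ge 1}\E|f_n(U)|^p+\E|f_\infty(U)|^p\Bigr)\ <\ \infty.
\]
For any $\tp\in(0,p)$, since the exponent $p/\tp>1$, the family $\{|f_n(U)-f_\infty(U)|^{\tp}\}_{n\ge 1}$ is uniformly integrable by the de la Vall\'ee Poussin criterion. Combined with convergence in probability (which follows from convergence in Lebesgue measure), this yields $\E|f_n(U)-f_\infty(U)|^{\tp}\to 0$, establishing \eqref{eq:showcon}.

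Finally, part (ii) follows by applying part (i) to the constant sequence $f_n\equiv f$ with prospective limit $f_\infty:=g$: the hypothesis $(U,f(U))\overset{D}{=}(U,g(U))$ trivially supplies the required weak convergence and the $L^p$ boundedness, so part (i) yields $\E|f(U)-g(U)|^{\tp}=0$, whence $f(U)=g(U)$ a.s.
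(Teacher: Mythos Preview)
Your proof is correct and follows essentially the same strategy as the paper: approximate $f_\infty$ by a continuous function, exploit the joint weak convergence via this approximation, and conclude using uniform integrability. The only organizational difference is that you pass through convergence in Lebesgue measure (via Portmanteau on the open set $\{|v-g(u)|<2\epsilon/3\}$) before invoking UI, whereas the paper applies the continuous mapping theorem directly to $(u,v)\mapsto v-h(u)$ and uses UI to pass to convergence of $\|f_n-h\|_{\tp}$; for part (ii) your constant sequence is a simpler variant of the paper's alternating sequence. One small cosmetic point: since the hypothesis is only $\limsup_n\E|f_n(U)|^p<\infty$, the displayed $\sup_{n\ge 1}$ should be a $\limsup$ (or a $\sup_{n\ge N}$), which is all that is needed for the tail uniform integrability.
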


For stating the second result, we recall the definitions of $\mathfrak{B}_{\theta}$, $\mathfrak{B}^*_{\theta}$,  $\vartheta_{W,\nu}$, $\Xi(F_{\theta})$, $\mathcal{L}_p$ from \cref{lem:wealimi}, \eqref{eq:tildef}, \eqref{eq:varthdef},  \cref{prop:freen} part (iii), and \cref{def:tilt2}, respectively. Also define
\begin{small}
\begin{equation}\label{eq:ntildef}
\widetilde{M}_{p,C}:=\left\{\mathrm{Law}(U,f(U)):\ f\in \mathcal{L}_p,\ \int_0^1 |f(u)|^p\,du\le C\right\}, \;\;\widetilde{M}_p:=\cup_{C\in \mathbb{N}}\ \widetilde{M}_{p,C}.
\end{equation}
\end{small}
With $\widetilde{\mathcal{M}}_p$ and $\widetilde{\mathcal{M}}_p$ as in \cref{def:M} and \eqref{eq:deftilde} respectively, the above definition gives $\widetilde{M}_p\subseteq \widetilde{\mathcal{M}}_p$ and $\widetilde{M}_{p,C}\subseteq \widetilde{\mathcal{M}}_{p,C}$. 
We also construct $\mathfrak{G}_1:[0,1]\times\R\to [0,1]\times \mathcal{N}$ given by $\mathfrak{G}_1(x,y):=(x,\alpha'(\theta y))$. 

We note an elementary observation here which will be used in the sequel. To wit, recall from \cref{lem:wealimi} that $\mathfrak{B}_{\theta}=\{\mathrm{Law}(U,\vartheta_{W,\nu}(U)), \nu\in\Xi(F_{\theta})\}$. Consequently from the definition of $\mathfrak{G}_1$ it follows that:
\begin{align}\label{eq:funrel}
\mathfrak{B}^*_{\theta}&= \{ \nu_1\circ\mathfrak{G}^{-1}_1: \nu_1 \in \mathfrak{B}_{\theta}\} =\{{\rm Law}(U,\alpha'(\theta \vartheta_{W,\nu}(U))), \nu\in \Xi(F_{\theta})\}.
\end{align}

\vspace{0.05in} We now state the following lemma, which formalizes a key property of the sets $\mathfrak{B}_{\theta}$ and $\mathfrak{B}^*_{\theta}$. Its proof is deferred to~\cref{sec:fifth}.

\begin{lmm}\label{lem:pfcmpct}
    Consider the same setting as in \cref{lem:wealimi}. Then the set $\mathfrak{B}_{\theta}$ is a compact subset of $\widetilde{M}_q$ in the weak topology, whereas  $\mathfrak{B}^*_{\theta}$ is a compact subset of $\widetilde{M}_p$ in the weak topology.
\end{lmm}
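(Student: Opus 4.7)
The plan is to exploit the identity $\mathfrak{B}^*_\theta = \Upsilon(W, \Xi(F_\theta))$ from \eqref{eq:highord2}, together with compactness of $\Xi(F_\theta)$ established in Proposition \ref{prop:freen} part (iv), and continuity of $\Upsilon(W, \cdot)$ on sets with bounded $p$-th moment as provided by Corollary \ref{cor:Upcont}. For $\widetilde{\mathfrak{B}}_\theta$, I will use a continuous-mapping argument based on the relation from \eqref{eq:funrel} that $\widetilde{\mathfrak{B}}_\theta$ is the image of $\mathfrak{B}^*_\theta$ under the pushforward by the continuous map $G_1(x,y) = (x, \alpha'(\theta y))$.

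For compactness of $\mathfrak{B}^*_\theta$, Lemma \ref{lem:basicprop} part (ii) supplies a constant $C_0 < \infty$ with $\sup_{\nu \in \Xi(F_\theta)} \fmm_p(\nu) \le C_0$, so $\Xi(F_\theta) \subseteq \widetilde{\mathcal{M}}_{p, C_0}$. Since $\|W\|_{q\Delta} < \infty$ by \eqref{eq:W_q}, Corollary \ref{cor:Upcont} yields continuity of $\Upsilon(W, \cdot)$ on $\widetilde{\mathcal{M}}_{p, C_0}$ in the weak topology. As $\Xi(F_\theta)$ is weakly compact by Proposition \ref{prop:freen} part (iv), the image $\mathfrak{B}^*_\theta = \Upsilon(W, \Xi(F_\theta))$ is weakly compact. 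Every element of $\mathfrak{B}^*_\theta$ is of the form $\mathrm{Law}(U, \vartheta_{W,\nu}(U))$ for a function $\vartheta_{W,\nu}$, and by Lemma \ref{lem:basicprop} part (ii), $\sup_{\mu \in \mathfrak{B}^*_\theta} \fmm_q(\mu) < \infty$, giving $\mathfrak{B}^*_\theta \subseteq \widetilde{M}_q$.

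For compactness of $\widetilde{\mathfrak{B}}_\theta$, observe that $G_1$ is continuous since $\alpha'$ is smooth by Definition \ref{def:tilt}, so the pushforward map on probability measures induced by $G_1$ is continuous in the weak topology by the continuous mapping theorem. Thus $\widetilde{\mathfrak{B}}_\theta$ is weakly compact as a continuous image of the compact set $\mathfrak{B}^*_\theta$. Moreover, for any $\nu = \Xi(f) \in \Xi(F_\theta)$, the fixed-point identity \eqref{eq:calter} gives $\alpha'(\theta \vartheta_{W,\nu}(u)) = f(u)$ for a.e. $u$, so elements of $\widetilde{\mathfrak{B}}_\theta$ have the form $\mathrm{Law}(U, f(U))$ with $f \in F_\theta \subseteq \mathcal{L}$; combined with $\sup_{\mu \in \widetilde{\mathfrak{B}}_\theta} \fmm_p(\mu) < \infty$ from Lemma \ref{lem:basicprop} part (ii), this gives $\widetilde{\mathfrak{B}}_\theta \subseteq \widetilde{M}_p$.

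The main subtlety lies in verifying that elements of $\mathfrak{B}^*_\theta$ fit the definition of $\widetilde{M}_q$, which a priori requires the second marginal $\vartheta_{W,\nu}$ to take values in $\mathrm{cl}(\mathcal{N})$; this is automatic for $\widetilde{\mathfrak{B}}_\theta$ via \eqref{eq:calter} since $\alpha'$ maps into $\mathcal{N}$, but warrants care in the first case. Beyond this bookkeeping, the argument is a clean composition of weak compactness, the continuity of $\Upsilon(W, \cdot)$, continuity of the pushforward by $G_1$, and the moment bounds from Lemma \ref{lem:basicprop} part (ii).
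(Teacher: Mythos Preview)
Your proposal is correct and follows essentially the same route as the paper: both use $\mathfrak{B}^*_\theta=\Upsilon(W,\Xi(F_\theta))$, the compactness of $\Xi(F_\theta)$ from Proposition~\ref{prop:freen}(iv), continuity of $\Upsilon(W,\cdot)$ from Corollary~\ref{cor:Upcont} on a set $\widetilde{\mathcal{M}}_{p,C}$ containing $\Xi(F_\theta)$ (via \eqref{eq:lip12}), and then obtain $\widetilde{\mathfrak{B}}_\theta$ as the continuous pushforward by $G_1$. The subtlety you flag about $\vartheta_{W,\nu}$ taking values in $\mathrm{cl}(\mathcal{N})$ is real given the literal definition of $\widetilde{M}_q$ via $\mathcal{L}$; the paper simply cites \eqref{eq:lip12} for the containments without addressing this point.
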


We are now in the position to prove \cref{cor:conmeanint}. 

\begin{proof}[Proof of \cref{cor:conmeanint}]
 

By arguments similar to \eqref{eq:lipcon} we have
$$d_{\ell}(\mathfrak{L}_n(\mathbf{m}),\widetilde{\mathfrak{L}}_n(\mathbf{m}))\overset{P}{\longrightarrow} 0,$$
where $\widetilde{\mathfrak{L}}_n$ is defined as in~\eqref{eq:unifemp}. Consequently by invoking \cref{lem:wealimi} part (b) we get:
\begin{align}\label{eq:vercond100}
d_{\ell}(\widetilde{\mathfrak{L}}_n(\mathbf{m}),\mathfrak{B}_{\theta})\overset{P}{\longrightarrow} 0.
\end{align}
Further by \eqref{eq:lip13}, there exists $C>0$ such that 
\begin{align}\label{eq:vercond101}
\lim\limits_{n\to\infty}\P(\widetilde{\mathfrak{L}}_n(\mathbf{m})\notin \widetilde{M}_{q,C})=0.
\end{align}
With the above observations in mind, we invoke~\cref{lem:proj_cont_map} with $X=\widetilde{M}_q$, $Y=\mathcal{M}$ equipped with the  topology of weak convergence, $\xi_n= \widetilde{\mathfrak{L}}_n(\mathbf{m})$, $\mf= \mathfrak{B}_{\theta}$, $g=\mathfrak{G}_1$ and $K=\widetilde{M}_{q,C}$ (with $C$ chosen as in \eqref{eq:vercond101}). Once we verify the assumptions of \cref{lem:proj_cont_map} with the above specifications, by \eqref{eq:funrel}, we obtain:
   \begin{equation}\label{eq:refflat}
d_{\ell}\left(\widetilde{\mathfrak{L}}_n(\boldsymbol{\alpha}),\mathfrak{B}^*_{\theta}\right)\overset{P}{\longrightarrow}0,
   \end{equation}
where we recall the definition of $\boldsymbol{\alpha}$ from~\eqref{eq:alpha_define}.

To verify the conditions of \cref{lem:proj_cont_map}, note that $\mathcal{F}=\mathfrak{B}_{\theta}\subseteq X={M}_q$ by \eqref{eq:lip12}. Further, \eqref{eq:vercond100} implies $d_X(\xi_n,\mf)\overset{P}{\to} 0$ and \cref{lem:basicprop} part (ii) along with Fatou's lemma implies $\mf$ is a compact subset of $X$. The conclusion in \eqref{eq:vercond101} implies \eqref{eq:tight}.  Finally, the continuity of $g$ on $K$ follows from the continuity of $\alpha'(\cdot)$.

\vspace{0.05in} 

 We now use \eqref{eq:refflat} to complete the proof. The key tool will once again be \cref{lem:proj_cont_map}. To set things up, fixing $C>0$ equip $\widetilde{M}_{p,C}$  with the weak topology. Pick any $\nu\in\widetilde{M}_{p,C}$. Then  $\nu$ is distributed as $(U,f(U))$, where $U\sim \mathrm{Unif}[0,1]$ and  $f:[0,1]\mapsto \R$ is measurable with $\lVert f\rVert_{p}\le C$. Consequently by~\cref{lem:specond} part (ii),  the map $\mathfrak{G}_2:\wmm_{p,C}\to L^{p'}[0,1]$, (for some $p'<p$) given by $\mathfrak{G}_2(\nu)=f$ is well-defined. 

For any $f\in F_{\theta}$, setting $\nu=\Xi(f)$ use  \eqref{eq:calter} to note that $f(U)=\alpha'(\theta \vartheta_{W,\nu}(U))$ a.s. Consequently, by \eqref{eq:funrel}, we get:


   \begin{align}\label{eq:newsame}
   \mathfrak{G}_2(\mathfrak{B}^*_{\theta})=F_{\theta}.
   \end{align}
   Moreover, by \eqref{eq:lip14}, there exists $C>0$ such that 
   \begin{align}\label{eq:newsame1}
   \lim\limits_{n\to\infty} \P(\widetilde{\mathfrak{L}}_n(\boldsymbol{\alpha})\notin\widetilde{M}_{p,C})=0.
   \end{align}

   With this observation, we will invoke~\cref{lem:proj_cont_map} with $X=\widetilde{M}_p$, $Y\equiv L^{p'}[0,1]$, equipped with the topologies of weak convergence and $L^{p'}[0,1]$ respectively, and $\xi_n=\widetilde{\mathfrak{L}}_n(\boldsymbol{\alpha})$, $\mf\equiv \mathfrak{B}^*_{\theta}$, $g\equiv \mathfrak{G}_2$, and $K=\widetilde{M}_{p,C}$ with $C$ chosen from \eqref{eq:newsame1}.  %
   Once we verify the conditions of~\cref{lem:proj_cont_map}, an application of \eqref{eq:newsame} will yield
   $$\lVert \mathfrak{G}_2(U,\alpha'(\theta m_{\lceil nU\rceil})) - \mathfrak{G}_2(\mathfrak{B}^*_{\theta})\rVert_{p'}=\inf_{f\in F_{\theta}}\int_0^1 |\alpha'(\theta m_{\lceil nu\rceil})-f(u)|^{p'}\,du\overset{P}{\longrightarrow} 0,$$
   which will complete the proof of~\eqref{eq:lpcon}. 
      
   To verify the conditions of \cref{lem:proj_cont_map}, note that $\mf=\mathfrak{B}^*_{\theta}\subseteq X=\widetilde{M}_p$ by \eqref{eq:lip12}. Further, \eqref{eq:refflat} implies $d_X(\xi_n,\mf)\overset{P}{\to} 0$, and \cref{lem:basicprop} part (ii) implies $\mf$ is a compact subset of $X$. The conclusion in \eqref{eq:newsame1} implies \eqref{eq:tight}. The fact that $g(\cdot)=\mathfrak{G}_2(\cdot)$ is well-defined on $X=\widetilde{M}_p$ follows from \cref{lem:specond} part (b). Continuity of $g$ on $K$ follows from~\cref{lem:specond} part (i). 

			\end{proof}

   For proving~\cref{prop:higherord}, we will need the following lemma whose proof we defer to~\cref{sec:fifth}. 
			\begin{lmm}\label{lem:auxtail}
				Suppose ${\bf X}$ is a sample from the  model~\eqref{eq:gibbs} ($\theta$ need not be non-negative). Suppose $p \in [v,\infty]$, $q>1$ satisfy~\eqref{eq:tailp}, $\limsup_{n\to\infty} \lVert W_{Q_n}\rVert_{q\Delta}<\infty$ and $\frac{1}{p}+\frac{1}{q}\leq 1$. 
    
(i) Given any vector $\mathbf{d}^{(n)}:=(d_1,d_2,\ldots ,d_n)$ such that $\lVert \mathbf{d}^{(n)}\rVert_{\infty}=O(1)$, we have
				$$\frac{1}{n}\sum_{i=1}^n d_i(X_i-\alpha'(\theta m_i))\stackrel{P}{\rightarrow}0.$$
(ii) If $\frac{1}{p}+\frac{1}{q}<1$, then
   $$\frac{1}{n}\sum_{i=1}^n m_i\left(X_i-\alpha'(\theta m_i)\right)\stackrel{P}{\rightarrow}0.$$
   \end{lmm}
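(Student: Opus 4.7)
Both parts of the lemma hinge on the martingale-type identity
$\E[X_i-\alpha'(\theta m_i)\mid X_{-i}]=0$ supplied by \eqref{eq:condex}, together with the fact that $m_i$ is $\sigma(X_{-i})$-measurable. Setting $Y_i:=X_i-\alpha'(\theta m_i)$, this immediately gives $\E Y_i=0$, and my plan for both parts is to upgrade this to $o_P(n)$ via a second-moment computation, with truncation to handle the unbounded tails of $\mu$.

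For part (i), I would bound $\E\bigl(\sum_{i=1}^N d_i Y_i\bigr)^2$. The diagonal contribution is
$\sum_i d_i^2\,\E Y_i^2\le \|d\|_\infty^2\sum_i \E\alpha''(\theta m_i)$,
which, after a preliminary truncation $X_i\mapsto X_i\mathbf 1(|X_i|\le L)$ controlled by \eqref{eq:lip11}, is $O(n)$ by the moment bound \eqref{eq:lip13}. The off-diagonal contribution is the crux. Here I would exploit the linear dependence of $m_j$ on $X_i$ under $\P(\cdot\mid X_{-i})$: writing
\begin{align*}
m_j \;=\; A_{ij}\,X_i+B_{ij}, \qquad
A_{ij}\;=\;\frac{v(v-1)}{n^{v-1}}\sum_{\substack{(i_3,\ldots,i_v)\\ \text{distinct},\,\neq i,j}}\widehat Q_n(j,i,i_3,\ldots,i_v)\prod_{a=3}^{v}X_{i_a},
\end{align*}
with $A_{ij},B_{ij}$ both $\sigma(X_{-i})$-measurable, a first-order Taylor expansion of $\alpha'$ gives
$\E[Y_iY_j\mid X_{-i}]=-\mathrm{Cov}(X_i,\alpha'(\theta m_j)\mid X_{-i})$,
whose absolute value is bounded by $|A_{ij}|$ times a quantity controlled by suprema of $\alpha''$. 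Summing, $\sum_{i\neq j}\E|Y_iY_j|=O\bigl(\sum_{i\neq j}\E|A_{ij}|\bigr)$, and the latter is $O(n)$ by \cref{lem:Tgraphon0}(ii) (an $L^{q\Delta}$ bound on $W_{Q_n}$) combined with the moment bound \eqref{eq:lip11}. This yields $\E(\sum d_i Y_i)^2=O(n)$ modulo $L$-dependent constants, hence $\sum d_i Y_i=O_P(\sqrt n)=o_P(n)$ after sending $L\to\infty$ slowly.

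For part (ii), I would split $\sum m_i Y_i=\sum m_i^{(L)}Y_i+\sum (m_i-m_i^{(L)})Y_i$ with $m_i^{(L)}:=m_i\mathbf 1(|m_i|\le L)$. The tail is handled by H\"older with conjugate exponents $q$ and $q'=q/(q-1)$:
\begin{align*}
\Big|\sum_i(m_i-m_i^{(L)})Y_i\Big|\;\le\;\Big(\sum_i|m_i|^q\mathbf 1(|m_i|>L)\Big)^{1/q}\Big(\sum_i|Y_i|^{q'}\Big)^{1/q'};
\end{align*}
the strict inequality $\tfrac1p+\tfrac1q<1$ forces $q'<p$, so the second factor is $O_P(n^{1/q'})$ by \eqref{eq:lip11} and \eqref{eq:lip14}, while the first factor is $o_P(n^{1/q})$ as $L\to\infty$ by \eqref{eq:lip13}, giving $o_P(n)$ in total. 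For the truncated piece $\sum m_i^{(L)}Y_i$ I would mirror part (i), the only complication being that the coefficients $m_i^{(L)}$ are now random; this is handled by conditioning on $X_{-i}$, under which $m_i^{(L)}$ is deterministic and $m_j^{(L)}$ is a bounded function of the linear combination $A_{ij}X_i+B_{ij}$, and then repeating the Taylor/second-moment estimate with the factor $L^2$ absorbed into the constants.

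\textbf{Main obstacle.} The delicate bookkeeping lies in bounding $\sum_{i\neq j}\E|A_{ij}|$ (and, for part (ii), its variant weighted by $m_i^{(L)}m_j^{(L)}$) uniformly in $i,j$: the natural bound combines the $L^{q\Delta}$-norm control on $W_{Q_n}$ with moments of $|X_i|^{v-2}$, and the exponents must line up so that $\sum_{i\neq j}\E|A_{ij}|=O(n)$ rather than $O(n^2)$. In part (ii), the loss of uniform boundedness of the coefficients — replaced by the $L^q$ moment bound on $m_i$ via \eqref{eq:lip13} — is precisely where the strict inequality $\tfrac1p+\tfrac1q<1$ becomes essential, as it provides the slack $q'<p$ needed so the H\"older tail estimate closes.
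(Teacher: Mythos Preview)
Your overall architecture matches the paper's: truncation plus a second-moment computation, with the off-diagonal terms handled by a leave-one/two-out decoupling and the key estimate that $\sum_{i\neq j}\E|m_i-m_{i,j}|=O(n)$ (your $\sum_{i\neq j}\E|A_{ij}|=O(n)$ is the same computation, since $m_j-m_{j,i}=A_{ij}X_i$). For part (ii) you also correctly identify where the strict inequality $\tfrac1p+\tfrac1q<1$ enters: it gives the H\"older slack $q'<p$ needed to kill the tail of $m_i$, exactly as the paper does.

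There is, however, a genuine gap in the off-diagonal step. You write that the covariance $\text{Cov}(X_i,\alpha'(\theta m_j)\mid X_{-i})$ is ``bounded by $|A_{ij}|$ times a quantity controlled by suprema of $\alpha''$''. But $\alpha''$ need not be globally bounded under \eqref{eq:tailp} (only when $\mu$ is compactly supported). The paper deals with this in two moves you omit. First, rather than Taylor expanding, it introduces the leave-two-out conditional mean $\mathcal{J}_{i,j}^{(L)}:=\E[X_i^{(L)}\mid X_k,\,k\neq i,j,\,X_j=0]$ and uses the \emph{exact} orthogonality $\E[(X_i^{(L)}-\mathcal{J}_{i,j}^{(L)})(X_j^{(L)}-\mathcal{J}_j^{(L)})]=0$; this reduces the cross term to $\E|\mathcal{J}_i^{(L)}-\mathcal{J}_{i,j}^{(L)}|$. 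Second, it does a \emph{second} truncation, this time on $m$ (via the indicator $\{|m_i|,|m_{i,j}|\le K\}$ in part (i) and a smooth cutoff $\psi_L(m_i)$ in part (ii)), so that the relevant Lipschitz constant of $t\mapsto \E_{\mu_t}[X\mathbf 1(|X|\le L)]$ is taken over a compact interval and is therefore finite. Your single truncation on $X$ does not buy this: even the derivative of the truncated mean $\mathfrak{E}_L(t)=\E_{\mu_t}[X\mathbf 1(|X|\le L)]$ satisfies only $|\mathfrak{E}_L'(t)|\le L^2+L|\alpha'(t)|$, which is unbounded in $t$. Once you add the $m$-truncation (controlling the excised mass by \eqref{eq:lip13}), your Taylor argument goes through and becomes equivalent to the paper's; as written, the step does not close.

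A minor point: your diagonal bound $\sum_i d_i^2\E Y_i^2\le \|d\|_\infty^2\sum_i\E\alpha''(\theta m_i)=O(n)$ ``by \eqref{eq:lip13}'' is not justified either, for the same reason. After $X$-truncation the diagonal is simply $\le 4D^2L^2 n$, which is what the paper uses.
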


\begin{proof}[Proof of~\cref{prop:higherord}]
				
				By~\cref{prop:propopt} part (iii), all the optimizers of the problem in~\eqref{eq:gibbsop} are constant functions. Further,~\eqref{eq:lip12} shows that there exists $K>0$ (depending on $\theta$) such that all the optimizers of~\eqref{eq:gibbsop} have $L^p$ norm bounded by $K$. Combining these two observations, we have that $F_{\theta}$ consists only of constant functions where the constants are given by
				$$\mathcal{A}_{\theta}=\argmin_{t\in\mathcal{N},\ |t|\leq K} [\gamma(\beta(t))-\theta t^v].$$
				As analytic non-constant functions can only have finitely many optimizers in a compact set, it follows that $A_\theta$ is a finite set. 
				\\
				
				\emph{(i)} Define $c_{i,L}:=c_i 1\{|c_i|\leq L\}$ and $\bar{m}:=n^{-1}\sum_{i=1}^n m_i$. We claim that result follows given the following display: 
				\begin{equation}\label{eq:p2i1}
					\lim_{L\to\infty}\limsup_{n\to\infty}\E\left[\frac{1}{n}\sum_{i=1}^n \left|c_i-c_i^{(L)}\right||X_i|\right]=0.
				\end{equation}
				This is because given any $L>0$ and any $t\in \mathcal{A}_{\theta}$ (recall this implies $|t|\le K$), the following inequalities hold:
    \begin{align*}
    &\;\;\;\;\;\bigg|\frac{1}{n}\sum_{i=1}^n c_iX_i\bigg|\\ &\le \frac{1}{n}\sum_{i=1}^n \bigg|c_i-c_i^{(L)}\bigg||X_i|+\bigg|\frac{1}{n}\sum_{i=1}^n c_i^{(L)}(X_i-\alpha'(\theta m_i))\bigg|+\bigg
    |\frac{1}{n}\sum_{i=1}^n c_i^{(L)}(\alpha'(\theta m_i)-t)\bigg|+\frac{|t|}{n}\bigg|\sum_{i=1}^n c_i^{(L)}\bigg|\\ &\le \frac{1}{n}\sum_{i=1}^n \bigg|c_i-c_i^{(L)}\bigg||X_i|+\bigg|\frac{1}{n}\sum_{i=1}^n c_i^{(L)}(X_i-\alpha'(\theta m_i))\bigg|+\frac{L}{n}\sum_{i=1}^n |\alpha'(\theta m_i)-t|\\ &\quad\quad\quad +\frac{K}{n}\bigg|\sum_{i=1}^n c_i\bigg|+\frac{K}{n L^{r-1}}\sum_{i=1}^n |c_i|^{r}.
    \end{align*}
    Taking an infimum over $t\in \mathcal{A}_\theta$ gives the bound
    \begin{align*}
    \bigg|\frac{1}{n}\sum_{i=1}^n c_iX_i\bigg| &\le 
 \frac{1}{n}\sum_{i=1}^n \bigg|c_i-c_i^{(L)}\bigg||X_i|+\bigg|\frac{1}{n}\sum_{i=1}^n c_i^{(L)}(X_i-\alpha'(\theta m_i))\bigg|\\
 &+\inf_{t\in \mathcal{A}_\theta}\frac{L}{n}\sum_{i=1}^n |\alpha'(\theta m_i)-t|+\frac{K}{n}\bigg|\sum_{i=1}^n c_i\bigg|+\frac{K}{n L^{r-1}}\sum_{i=1}^n |c_i|^{r}.
    \end{align*}

    The first and last terms above converge to $0$ in probability as $n\to\infty$ first, followed by $L\to\infty$, by using~\eqref{eq:p2i1} and the assumption $\sum_{i=1}^n |c_i|^r=O(n)$ for $r>1$. The remaining terms converge to $0$ as $n\to\infty$ for fixed $L>0$, by using~\cref{lem:auxtail} part (i),~\eqref{eq:lpcon}, and $\sum_{i=1}^n c_i=o(n)$, respectively. This completes the proof.
    
				Next, we prove~\eqref{eq:p2i1}. Fix  $r'\in (1,r)$ such that $\frac{1}{p}+\frac{1}{r'}=1$. By H\"{o}lder's inequality,
				\begin{align*}
			&\E\left[\frac{1}{n}\sum_{i=1}^n \left|c_i-c_i^{(L)}\right||X_i|\right]=\E\left[\frac{1}{n}\sum_{i=1}^n \left|c_i\right|1\{|c_i|> L\}|X_i|\right]\\
 \leq& \left(\frac{1}{N}\sum_{i=1}^n |c_i|^{r'}1\{|c_i|>L\}\right)^{\frac{1}{r'}}\left(\frac{1}{n}\sum_{i=1}^n \E|X_i|^{p}\right)^{\frac{1}{p}}
   \leq  \frac{1}{L^{\frac{r-r'}{r'}}}\left(\frac{1}{N}\sum_{i=1}^n |c_i|^r\right)^{\frac{1}{r'}}\left(\frac{1}{n}\sum_{i=1}^n \E|X_i|^{p}\right)^{\frac{1}{p}}.
   \end{align*}
				This, along with~\eqref{eq:lip11} and the assumption $\sum_{i=1}^n |c_i|^r=O(n)$, establishes~\eqref{eq:p2i1}.

				
				\vspace{0.1in}

    \emph{(ii)} As in part (i),  we have
    \begin{small}
    \begin{align*}
   \inf_{t\in \mathcal{A}_\theta} \bigg|\frac{1}{n}\sum_{i=1}^n c_iX_i-c_0t\bigg|\le \bigg|\frac{1}{n}\sum_{i=1}^n (c_i-c_0)X_i\bigg|+\frac{|c_0|}{n}\bigg|\sum_{i=1}^n (X_i-\alpha'(\theta m_i))\bigg|+\frac{|c_0|}{n}\inf_{t\in\mathcal{A}_{\theta}}\sum_{i=1}^n  |\alpha'(\theta m_i)-t|.
    \end{align*}
    \end{small}
    The first term converges to $0$ in probability by part (i), the second converges to $0$ by~\cref{lem:auxtail} part (a), and the third term converges to $0$ by~\eqref{eq:lpcon}.
			    This completes the proof.
				\\
				
    \emph{(iii)} 
We begin by observing that for any $L>1$, we have:
			$$\mathfrak{C}_{L,1}:=\sup_{x\in [-L,L]} \bigg|\frac{d}{dx}(x\alpha'(x))\bigg|<\infty,\quad \quad \mathfrak{C}_{L,2}:=\inf_{x\in [-L,L]} \alpha''(x)>0,$$
			both of which follow from standard properties of exponential families. Recall that for any $t\in\mathcal{A}_{\theta}$, we have $t=\alpha'(\theta v t^{v-1})$ by~\cref{prop:propopt} part (i). 
           Now, if $ m_i$ and  $v t^{v-1}$ both  lie between $[-L,L]$, then we have
   $$|\alpha'(\theta m_i)-t|=|\alpha'(\theta m_i)-\alpha'(\theta v t^{v-1})|\ge |\theta|{\mathfrak{C}}_{L|\theta|,2}| m_i-v t^{v-1} |.$$
   and so for $L\ge L_0:=\max_{t\in \mathcal{A}_\theta}|v t^{v-1}|$ (recall that $\mathcal{A}_\theta$ is a finite set, as proved above) we have 
    \begin{align}\label{eq:tooshow}
			     &\;\;\;\;\inf_{t\in \mathcal{A}_{\theta}}\frac{1}{n}\sum_{i=1}^n |m_i-v t^{v-1}|\nonumber \\ &\leq  \inf_{t\in \mathcal{A}_{\theta}}\frac{1}{n\mathfrak{C}_{L|\theta|,2}}\sum_{i=1}^n |\alpha'(\theta m_i)-t|+\frac{2}{n}\sum_{i=1}^n |m_i|\mathbbm{1}(|m_i|\geq L)\nonumber \\&\leq  \inf_{t\in \mathcal{A}_{\theta}}\frac{1}{n\mathfrak{C}_{L|\theta|,2}}\sum_{i=1}^n |\alpha'(\theta m_i)-t|+\frac{2}{nL^{q-1}}\sum_{i=1}^n |m_i|^{q}.
			 \end{align}
  The RHS above converges to $0$ in probability, as $n\to\infty$, followed by $L\to\infty$. This is because, 
			the first term in~\eqref{eq:tooshow} converges to $0$ in probability as $n\to\infty$ for every fixed $L$, by using~\eqref{eq:lpcon}. The second term converges to $0$ in probability by taking $n\to\infty$ first, followed by $L\to\infty$, by using~\eqref{eq:lip13}. 

   Next choose $\tq\in (1,q)$ such that $p^{-1}+\tq^{-1}=1$. Note that for any $t\in\mathcal{A}_{\theta}$, $vt^{v-1}\alpha'(\theta vt^{v-1})=vt^v$ (using the relation $t=\alpha'(\theta v
 t^{v-1})$). In the same vein as~\eqref{eq:tooshow}, by using~\eqref{eq:condex}, we also get for all $L$ large enough:
   \begin{small}
   \begin{align}\label{eq:2sh}
&\;\;\;\;\;\inf_{t\in\mathcal{A}_{\theta}}\frac{1}{n}\sum_{i=1}^n |m_i\alpha'(\theta m_i)-vt^v|\nonumber\\ &\le \inf_{t\in\mathcal{A}_{\theta}}\frac{\mathfrak{C}_{L|\theta|,1}}{|\theta| n}\sum_{i=1}^n |m_i-vt^{v-1}|+\frac{1}{L^{\frac{q}{\tq}-1}}\left(\frac{1}{n}\sum_{i=1}^n |m_i|^q\right)^{\frac{1}{\tq}}\left(\frac{1}{n}\sum_{i=1}^n |\alpha'(\theta m_i)|^{p}\right)^{\frac{1}{p}}.
   \end{align}
   \end{small}
   The RHS above converges to $0$ in probability, as $n\to\infty$, followed by $L\to\infty$. This is because, the first term converges to $0$ as $n\to\infty$ by~\eqref{eq:tooshow}, and the second term converges to $0$ as $n\to\infty$ followed by $L\to\infty$ by using~\eqref{eq:lip13} and \eqref{eq:lip11}. Finally, the conclusion in part (iii) follows by combining~\eqref{eq:2sh} with~\cref{lem:auxtail} part (ii).



			\end{proof}			
   							
\section{Proof of Results from~\cref{sec:isingpotts}}\label{sec:exrespf}

  	\begin{proof}[Proof of~\cref{prop:isingdom}] \emph{(i)} Note that quadratic forms correspond to the choice $H=K_2$ and $v=2$ in \eqref{eq:U}. Let $\mu_\theta$ be the tilted probability measure on $\R$ obtained from $\mu$ as in~\cref{def:tilt}. 
     Then a direct computation using~\eqref{eq:ising} gives
	\begin{align}
			  \isZ_n(\theta,B)
    ={\alpha}(B)+\frac{1}{n}\log{\E_{{\bf X}\sim {\mu}_B^{\otimes n}}\exp\left(\frac{\theta}{n}\sum_{i\neq j} Q_n(i,j)X_i X_j\right)}.
			\end{align}
				Using this along with \cref{prop:freen} part (iii) we get
		\begin{align}
		\notag&\isZ_n(\theta,B)-\alpha(B)\\
		&\to \sup_{f\in \mathcal{L}_p} \left(\theta \int_{[0,1]^2} W(x,y)f(x)f(y)\,dx\,dy-\int_{[0,1]}{\gamma}_B({\beta}_B(f(x)))\,dx\right)\nonumber \\&=\sup_{f\in \mathcal{L}_p} \left(\theta \int_{[0,1]^2} W(x,y)f(x)f(y)\,dx\,dy-\int_{[0,1]}(\gamma(\beta(f(x)))+{\alpha}(B)-B f(x))\,dx\right).\end{align}
			    Here ${\gamma}_B(.)$ and ${\alpha}_B(.)$ are as in~\cref{def:tilt}, but for the tilted measure ${\mu}_B$ instead of $\mu$, and the last equality uses \eqref{eq:simplify}. This concludes the proof of part (i).

    \emph{(ii)} By invoking~\cref{prop:propopt} part (iii), if $v$ is even, the set of optimizers $F_\theta\equiv F_{\theta,B}$ 
    in the above display are constant functions, 
				where the constant is an optimizer of the following optimization problem:
	\begin{equation}			
    \sup_{x\in \alpha'(\R)} \left(\theta x^2+Bx-x \beta(x)+\alpha(\beta(x))\right),
    \end{equation}
				where we recall from~\cref{def:tilt} that $\gamma(\theta)= \theta \alpha (\theta)- \alpha' (\theta)$. By~\cref{lem:fixsol} (parts (i) and (ii)), if either (a) $B\ne 0$, or (b) $B=0$, $\theta\leq (\alpha''(0))^{-1}/2$,  then the optimizer is $x=\tm$. 
				On the other hand when $B=0$ and $\theta>(\alpha''(0))^{-1}/2$, by~\cref{lem:fixsol} part (iii)
				the optimizers are $x=\pm \tm$. 
		Using this, the desired conclusion of part (ii) follows.		
				\\

\noindent \emph{(iii)} Recall the definition of $\mathcal{A}_{\theta}\equiv \mathcal{A}_{\theta,B}$ from \eqref{eq:defset}, 
and use part (ii) to note that all functions in $F_{\theta,B}$ are constant functions, with constants belonging to the set $\mathcal{A}_{\theta,B}$. Since $v=2$, we have
$$\{{v t^v}:\ t\in\mathcal{A}_{\theta,B}\}= \{{2t^2}:\ t\in\mathcal{A}_{\theta,B}\}=2t_{\theta,B}^2\Rightarrow \frac{1}{n}\sum_{i=1}^n X_i m_i\overset{d}{\longrightarrow} 2\tm^2,$$
where we use~\cref{prop:higherord} part (iii).

For the weak limit of $\bar{X}$ we invoke \cref{prop:higherord} part (ii) with $c_i=1$ which implies $c_0=1$. The conclusion follows by noting that when $B=0$, the symmetry of $\mu$ about the origin implies that $\bar{X}$ and $-\bar{X}$ have the same distribution.
\end{proof}
   
\begin{proof}[Proof of~\cref{prop:genopt}] 
 \emph{(i)} Let $\mu_\theta$ be the tilted measure obtained from $\mu$ as in~\cref{def:tilt}, and  let $\alpha_B(.), \beta_B(.),{\gamma}_B(.)$ be as in~\cref{def:tilt}, but for the measure ${\mu}_B$ instead of $\mu$. 
Using~\eqref{eq:simplify} we get $$\gamma_B(\beta_B(t))=\gamma(\beta(t))+\alpha(B)-Bt,$$ using which the optimization problem in
\eqref{eq:with_h_opt} (ignoring the additive constant $\alpha(B)$) becomes
 \begin{align}\label{eq:opta}
          \sup_{f\in \mathcal{L}_p:\ \int_{[0,1]}  \gamma( \beta(f(x)))dx<\infty}\left\{\theta G_{W}(f) -\int_{[0,1]} \gamma_B( \beta_B(f(x)))dx\right\}.
          \end{align}
         We note that even though \eqref{eq:simplify} was derived under the assumption that $\mu$ is symmetric  (see~\cref{prop:mu_suff} part (b)), this assumption is not needed for \eqref{eq:simplify}.
Now, we invoke \cref{prop:propopt} part (i) to conclude that any maximizer of the above display satisfies the fixed point equation \eqref{eq:higher_fixed_point}.
\\
\emph{(ii)} It suffices to show that all optimizers of \eqref{eq:opta} are constant functions, for which invoking~\cref{prop:propopt} part (iii) it suffices to show that $\mu_B$ is stochastically non-negative (as per~\cref{def:stochastic_nn}). But this follows on noting that $\mu$ is stochastically non-negative, and $B\ge 0$.
Finally,~\eqref{eq:propoptlat} follows from~\eqref{eq:higher_fixed_point}, on setting $f(.)$ to be a constant function.
\\


\emph{(iii)(a)} The optimization problem~\eqref{eq:with_h_opt} 
 reduces to maximizing
\begin{align}\label{eq:optimize_H}
    H_{\theta,B}(x):= \theta x^v +B x - \gamma(\beta(x))
\end{align}
over $x\in [-1,1]$.
Differentiating we get
\begin{align}\label{eq:H_derivatives}
    H'_{\theta,B}(x) = \theta v x^{v-1} +B -\beta(x), \quad
    H''_{\theta,B}(x)= \theta v (v-1) x^{v-2} -\beta'(x).
\end{align}
Since, $\mu$ is supported on $[-1,1]$, we have $\lim_{\theta\to\infty}\alpha'(\theta)=1$, and so $$\alpha '' (\theta) = \mathbb{E}_{\mu_\theta}(X^2)- (\alpha ' (\theta))^2\le 1-(\alpha'(\theta))^2\rightarrow 0$$
as $\theta \rightarrow \infty$.
Hence, there exists $B_0=B_0(\theta,v)$ such that for $B\ge B_0$ we have $\alpha''(B)< \frac{1}{2 \theta v(v-1)}$. If $x$ is a global maximizer of $H(.)$, then we have 
$$x= \alpha'(\theta v x^{v-1} +B) \ge \alpha'(B) \implies \beta(x) \ge B.$$ However, on the interval $ \{x:\beta(x) \ge B\}$, using boundedness of support, we have 
$$H''_{\theta,0}(x) \le \theta v (v-1) - \frac{1}{\alpha ''(\beta(x))} <0.$$
Thus $H_{\theta,B}(.)$ is strictly concave on the interval $\{x:\beta(x)\ge B\}$, and so the global maximizer must be unique. \\\\
\emph{(iii)(b)} 
 We break the proof into the following steps:

\begin{itemize}
    \item {There exists $\theta_{1c}\in (0,\infty)$ such that $0$ is the unique global maximizer for $H_{\theta,0}(.)$.}
    \\
    Since $\mu$ is compactly supported on $[-1,1]$, we have $$\alpha''(\theta)=\mathrm{Var}_{\mu_\theta}(X)\le 1,\text{ and so }\beta'(x)= \frac{1}{\alpha''(\beta(x))} \ge 1.$$
Thus for $\theta<\frac{1}{2v(v-1)}=:\theta_{1c}$ we have
\begin{align*}
    H''_{\theta,0}(x) \le \theta v(v-1) -\beta'(x)< 0,
\end{align*}
and so $H_{\theta,0}$ is strictly concave.  Since $H'_{\theta,0}(0)=0$, $x=0$  is the unique global maximizer of $H_{\theta,0}(.)$. 
\\

\item{There exists $\theta_{2c}\in (0,\infty)$ such that for $\theta>\theta_{2c}$, $0$ is not a global maximizer of $H_{\theta,0}(.)$
}

We consider two separate  cases: \begin{itemize}
\item 
$\mu$ is stochastically non-negative.

In this case there exists $x_0>0$ such that $\gamma(\beta(x_0))\in (0,\infty)$. Then setting $\theta_{2c}:=x_0^{-v}\gamma(\beta(x_0))\in (0,\infty)$, for $\theta>\theta_{2c}$ we have
$$H_{\theta,0}\left(x_0\right)= \theta x_0^{-v}- \gamma\left(\beta\left(x_0\right)\right)>0=H_{\theta,0}(0),$$
and so $0$ cannot be a global optimizer of $H_{\theta,0}(.)$

\item $v$ is even.

If there exists $x_0>0$ such that $\gamma(\beta(x_0))\in (0,\infty)$, we are through by previous argument. Othewise, since $\mu$ is not degenerate at $0$, there exists $x_0<0$ such that  $\gamma(\beta(x_0))\in (0,\infty)$. Again setting $\theta_{2c}:=x_0^{-v}\gamma(\beta(x_0))\in (0,\infty)>0$ with $v$ even, the same  proof works.
\end{itemize}

\item{For any $\theta>0$, let $x_\theta$ be any non-negative global optimizer of $H_{\theta,0}(.)$. Then the map $\theta\mapsto x_\theta$ is non-decreasing.}
\\
Suppose by way of contradiction there exists $0<\theta_1<\theta_2<\infty$ such that $0\le x_{\theta_2}<x_{\theta_1}$. 
By optimality of $x_{\theta_1}$  we have
\begin{eqnarray*}
      \theta_1 x^v_{\theta_1} -\gamma(\beta(x_{\theta_1})) &\ge &\theta_1 x^v_{\theta_2} - \gamma(\beta(x_{\theta_2}))\\
      \implies \theta_1 (x^v_{\theta_1}-x^v_{\theta_2}) &\ge &\gamma(\beta(x_{\theta_1}))-\gamma(\beta(x_{\theta_2}))\\
     \implies \theta_2 (x^v_{\theta_1}-x^v_{\theta_2}) &>&\gamma(\beta(x_{\theta_1}))-\gamma(\beta(x_{\theta_2})).
    \end{eqnarray*}
    Here the last implication uses the fact $x_{\theta_1}>x_{\theta_2}\ge 0$. But this
contradicts the fact that $x_{\theta_2}$ is a global maximizer for $H_{\theta_2,0}(.)$. 
\\

Combining the last three claims, the conclusion of part (iii)(b) follows on setting
$
    \theta_c := \sup_{\theta>0} \{0 \text{ is a global maximizer of } H_{\theta,0}(.)\}.$
    \\
    
\end{itemize}
\end{proof}
   
			\section{Proof of Main Lemmas}\label{sec:fifth}

						\begin{proof}[Proof of~\cref{lem:pivotlem}] 
       As before, we also choose $\tp\in (1,p)$ and $\tq\in(1,q)$ such that $\frac{1}{\tp}+\frac{1}{\tq}=1$. Note that $\Upsilon(\cdot,\cdot)$ is  well-defined on $\mr$ (see~\cref{def:pivotlem}) by using~\cref{lem:Tgraphon0} (ii), with $W(\cdot,\cdot)$ as is,  $\phi(x_1,\ldots ,x_v)=\prod_{a=2}^v x_a$, and $p,q$ replaced with $\tp,\tq$.
							
							\vspace{0.1in}
							
							\emph{(i)} Recall the definition of $\vartheta_{W,\nu}(\cdot)$ from~\eqref{eq:varthdef}, and the connection $\Upsilon(W,\nu)=\mathrm{Law}\left(U_1,{\vartheta_{W,\nu}(U_1)}\right)$ between  $\Upsilon$ and $\vartheta_{W,\nu}$ from~\cref{def:pivotlem}. We will prove the following stronger claim.
							\begin{equation}\label{eq:pivotlem1}
							\sup_{W:\ \lVert W\rVert_{q\Delta}\leq C}\sup_{\nu:\ \fmm_p(\nu)\leq C}\int_0^1 |\vartheta_{W,\nu}(u)-\vartheta_{W,\nu_{L}}(u)|\,du\to 0,\quad \mbox{as}\ L\to\infty.
							\end{equation}
						Towards this end, fix $L>1$ and note that
						{\small \begin{align*}
						&\;\;\;\;|\vartheta_{W,\nu}(u)-\vartheta_{W,\nu_L}(u)|\\ & \le v\sum_{\substack{A\subseteq \{2,\ldots ,v\},\\ |A|\geq 1}}\E\left[|\mathrm{Sym}[W](u,U_2,\ldots ,U_v)|\left(\prod_{a\in A}|V_a|1\{|V_a|>L\}\right)\left(\prod_{a\in A^c}|V_a|1\{|V_a|\leq L\}\right)\right].
						\end{align*}}
				      	For every non-empty fixed set $A\subseteq \{2,\ldots ,v\}$, an application of~\cref{lem:Tgraphon0} part (ii) 
           with $W(\cdot,\cdot)$ as is, $$\phi(x_1,\ldots ,x_v)=\left(\prod_{a\in A} |x_a|\mathbbm{1}(|x_a|\geq L)\right)\left(\prod_{a\in A^c} |x_a|\mathbbm{1}(|x_a|\leq L)\right),$$
				      	and $p,q$ replaced by $\tp$, $\tq$ on the above bound, gives
				      	\begin{align}\label{eq:neednow}
				      		&\;\;\;\;\sup_{W:\ \lVert W\rVert_{q\Delta}\leq C}\sup_{\nu:\ \fmm_p(\nu)\leq C}\int_0^1 |\vartheta_{W,\nu}(u)-\vartheta_{W,\nu^{(L)}}(u)|\,du\nonumber \\ &\le v \sup_{W:\ \lVert W\rVert_{q\Delta}\leq C}\sup_{\nu:\ \fmm_p(\nu)\leq C} \sum_{A\subseteq \{2,\ldots ,v\},\ |A|\geq 1} \lVert W\rVert_{\tq\Delta}^{|E(H)|} \Bigg(\left(\prod_{a\in A} \E_{\nu}[|V_a|^{\tp}\mathbbm{1}(|V_a|\geq L)]\right)\nonumber \\&\;\;\;\;\;\;\qquad\left(\prod_{a\in A^c} \E_{\nu}[|V_a|^{\tp}\mathbbm{1}(|V_a|\leq L)]\right)\Bigg)^{\frac{1}{\tp}} \nonumber \\ &\le v2^{v} \sup_{W:\ \lVert W\rVert_{q\Delta}\leq C}\sup_{\nu:\ \fmm_p(\nu)\leq C} L^{\tp - p}\lVert W\rVert_{\tq \Delta}^{|E(H)|}(1+\fmm_p(\nu))^{\frac{v-1}{p}}\to 0,
				      	\end{align}
			      	    as $L\to\infty$. This proves~\eqref{eq:pivotlem1}, and hence completes part (i).
			      	    
			      	    \vspace{0.1in}
			      	    
			      	    \emph{(ii)} Given $W\in\mathcal{W}$, $\nu\in\widetilde{\mathcal{M}}$ and any $u\in [0,1]$, define
			      	    \begin{equation}\label{eq:rowsumtrun}
			      	    	\mathfrak{R}(u;W):=\E[\mathrm{Sym}[|W|](u,U_2,\ldots ,U_v)]
			      	    \end{equation}
              where $U_2,\ldots ,U_v\overset{i.i.d.}{\sim} \mathrm{Unif}[0,1]$. For $k<\infty$,\ and $T>0$, define 
			      	    $$c_k^{(T)}(u):=1\{\mathfrak{R}(u;W_k)\leq T,\ \mathfrak{R}(u;W_{\infty})\leq T\},$$
			      	    for $u_1\in [0,1]$. With this notation, triangle inequality gives
            \begin{align}
            d_\ell\Big(\Upsilon(W_k,\nu),\Upsilon(W_\infty,\nu)\Big)=&d_\ell\Big(\mathrm{Law}\big(U, \vartheta_{W_k,\nu}(U)\big), \mathrm{Law}\big(U, \vartheta_{W_\infty,\nu}(U)\big)\Big) \nonumber \\
                 \le &d_\ell\Big(\mathrm{Law}\big(U, \vartheta_{W_k,\nu}(U)\big), \mathrm{Law}\big(U, \vartheta_{W_k,\nu}(U)c_k^{(T)}(U)\big)\Big) \nonumber \\
                 +&d_\ell\Big(\mathrm{Law}\big(U, \vartheta_{W_\infty,\nu}(U)\big), \mathrm{Law}\big(U, \vartheta_{W_\infty,\nu}(U)c_k^{(T)}(U)\big)\Big) \nonumber \\
                 +&d_\ell\Big(\mathrm{Law}\big(U, \vartheta_{W_k,\nu}(U)c_k^{(T)}(U)\big), \mathrm{Law}\big(U, \vartheta_{W_\infty,\nu}(U)c_k^{(T)}(U)\big)\Big). \label{eq:method_of_moment}
                 \end{align}
                 Using the above display,
                 the conclusion in part (ii) will follow if we can show the following:
\begin{eqnarray}\label{eq:pivotlem2}
	&&\lim\limits_{T\to\infty}\limsup_{k\to\infty} \sup_{\nu\in\wmm^{(L)}} \int_0^1 |\vartheta_{W_k,\nu}(u)(1-c_k^{(T)}(u))|\,du=0,\\
	\label{eq:pivotlem3}
&&\lim\limits_{T\to\infty}\limsup_{k\to\infty} \sup_{\nu\in\wmm^{(L)}} \int_0^1 |\vartheta_{W_{\infty},\nu}(u)(1-c_k^{(T)}(u))|\,du=0,\\
\label{eq:pivotlem4}
	&&\sup_{\nu\in\wmm^{(L)}} \Bigg|\int_0^1 \big(\vartheta_{W_k,\nu}(u)c_k^{(T)}(u)\big)^r u^s\,du-\int_0^1\big(\vartheta_{W_{\infty},\nu}(u)c_k^{(T)}(u)\big)^r u^s\,du\Bigg|\to 0,
	      	\end{eqnarray}
      	         as $k\to\infty$, for every $T>0$, and all non-negative integers $r,s$. Indeed, \eqref{eq:pivotlem2} and \eqref{eq:pivotlem3} imply uniform convergence in $L_1$ (and hence in weak topology) of the first two terms in the RHS of \eqref{eq:method_of_moment}, whereas the third term in the RHS of \eqref{eq:method_of_moment} can be controlled using \eqref{eq:pivotlem4}, and the fact that for bounded random variables moments convergence imply distribution convergence.

      	                \vspace{0.1in}
      	                
      	                \emph{Proof of~\eqref{eq:pivotlem2}.} 
                       To begin, for any $W\in\mathcal{W}$ we have the bound
                       \begin{align*}
                       \sup_{\nu\in \wmm^{(L)}}|\vartheta_{W,\nu}(u)|\le L^{v-1}\mathfrak{R}(u;W),
                       \end{align*}which gives
                       $$|\vartheta_{W_k,\nu}(u)(1-c_k^{(T)}(u))|\le L^{v-1} \mathfrak{R}(u; W_k)\left(1\{\mathfrak{R}(u;W_k)>T\}+1\{\mathfrak{R}(u;W_{\infty})>T\}\right).$$ Therefore,~\eqref{eq:pivotlem2} will follow if we can show that
      	                \begin{equation}\label{eq:pivotlem5}
      	                \lim_{T\to\infty}\limsup_{k\to\infty} \int_0^1 \mathfrak{R}(u; W_k)\left(1\{\mathfrak{R}(u;W_k)>T\}+1\{\mathfrak{R}(u;W_{\infty})>T\}\right)\,du=0.
      	                \end{equation}
      	                We now complete the proof based on the following claim, whose proof we defer.
                       \begin{align}\label{eq:simineq3}
                       \limsup_{k\to\infty}\int_0^1 \mathfrak{R}^q(u;W_k)\,du<\infty, \quad \int_0^1 \mathfrak{R}^q(u;W_{\infty})\,du<\infty.
                       \end{align}
                       We will now deal with \eqref{eq:pivotlem5} term by term. First note that:
                       \begin{align*}
                           \int_0^1 \mathfrak{R}(u; W_k)1\{\mathfrak{R}(u;W_k)>T\}\,du\le \frac{1}{T^{q-1}}\int_0^1 \mathfrak{R}^{q}(u;W_k)\,du.
                       \end{align*}
                       By the first claim in \eqref{eq:simineq3}, the right hand side above converges to $0$ by taking $k\to\infty$ followed by $T\to\infty$, thus proving the first claim in \eqref{eq:pivotlem5}. For the second claim in \eqref{eq:pivotlem5}, setting $\tp=q/(q-1)$  H\"older's inequality gives
                   \begin{align*}
                      &\;\;\;\; \int_0^1 \mathfrak{R}(u; W_k)1\{\mathfrak{R}(u;W_{\infty})>T\}\,du\\ &\le \left(\int_0^1 \mathfrak{R}^q(u;W_k)\,du\right)^{\frac{1}{q}}  \left(\int_0^1 1\{\mathfrak{R}(u;W_{\infty})>T\}\,du\right)^{\frac{1}{\tp}}\\ &\le \left(\int_0^1 \mathfrak{R}^q(u;W_k)\,du\right)^{\frac{1}{q}}  \frac{1}{T^{\frac{1}{p}}}\left(\int_0^1 \mathfrak{R}(u;W_{\infty})\,du\right)^{\frac{1}{\tp}},
                   \end{align*}
                   where the final quantity above converges to $0$ taking $k \rightarrow \infty$ followed by $T \rightarrow \infty$ using both claims in \eqref{eq:simineq3}. 
                            This proves the second claim in~\eqref{eq:pivotlem5}, and hence completes the verification of~\eqref{eq:pivotlem2}, subject to proving \eqref{eq:simineq3}. 

                            \noindent\emph{Proof of \eqref{eq:simineq3}.}  Note that
                            \begin{align*}
                                \mathfrak{R}^q(u;W_k)&=\left(\E[\mathrm{Sym}[|W_k|](u,U_2,\ldots,U_v)]\right)^q\le \E[\mathrm{Sym}[|W_k|^q](u,U_2,\ldots ,U_v)],
                            \end{align*}
			      	    where the inequality follows from Lyapunov's inequality (the function $r\mapsto \E [|X|^r]^{1/r}$ is non-decreasing on $(0,\infty)$). On integrating over $u$ we get
              $$\int_0^1 \mathfrak{R}^q(u;W_k)\,du\le \E[\mathrm{Sym}[|W_k|^q](U_1,\ldots ,U_v)]\le \lVert W_k\rVert_{q\Delta}^q,$$
              where the last inequality follows from \cref{lem:Tgraphon0}, part (iii). By our assumption $\limsup_{k\to\infty}\lVert W_k\rVert_{q\Delta}<\infty$, the first conclusion in \eqref{eq:simineq3} follows. The second conclusion follows similarly.
			      	    \vspace{0.1in}

			      	    \emph{Proof of~\eqref{eq:pivotlem3}.} This follows the exact same line of argument as the proof of~\eqref{eq:pivotlem2}, and hence is omitted for brevity.
			      	    
			      	    \vspace{0.1in}
			      
            	    \emph{Proof of~\eqref{eq:pivotlem4}.} Set $h_{\nu}(u):=\E_{\nu}[V|U=u]$, and use the definition $\vartheta_{W_k,\nu}(\cdot)$ in \eqref{eq:varthdef} to note that
              \begin{align*}
                  &\;\;\;\;\int_0^1 \left(\vartheta_{W_k,\nu}(u_1)c_k^{(T)}(u_1)\right)^r u_1^s\,du_1\\ &=\int_0^1 c_k^{(T)}(u_1) u_1^s\left(\int_{[0,1]^{(v-1)r}
                  }\prod_{i=1}^r \left(\mathrm{Sym}[W_k](u_1,u_2^{(i)},\ldots ,u^{(i)}_v)\prod_{a=2}^v h_{\nu}(u_a^{(i)})\,d u_a^{(i)}\right)\right)\,du_1
              \end{align*}
              We can similarly write out an expression for $\int_0^1 \left(\vartheta_{W_{\infty},\nu}(u_1)c_k^{(T)}(u_1)\right)^r u_1^s\,du_1$ with $\mathrm{Sym}[W_k]$ is replaced by $\mathrm{Sym}[W_{\infty}]$. Accordingly, to establish \eqref{eq:pivotlem4}, replacing each $\mathrm{Sym}[W_k](u_1,u_2^{(i)},\ldots ,u_v^{(i)})$ by $\mathrm{Sym}[W_{\infty}](u_1,u_2^{(i)},\ldots ,u_v^{(i)})$ sequentially, it suffices to show that: 
              \begin{equation}\label{eq:pivotlem6}
			      	    \lim_{k\to\infty}\sup_{\nu\in\wmm^{(L)}} \ \big|\mathfrak{F}_{k}^{\nu,A}\big|=0,
			      	    \end{equation}
              for every fixed $L>0$ and $A\subseteq \{2,\ldots ,r\}$, where 
			      	    \begin{small}
			      	    \begin{align*}
			      	    &\mathfrak{F}_{k}^{\nu,A}:=\int_0^1 \bigg(\int_{[0,1]^{v-1}}(\mathrm{Sym}[W_k](u_1,u_2^{(1)},\ldots ,u_v^{(1)})-\mathrm{Sym}[W_{\infty}](u_1,u_2^{(1)},\ldots ,u_v^{(1)}))c_k^{(T)}(u_1)\\ & \prod_{a=2}^v h_{\nu}(u_a^{(1)})\,d u_a^{(1)}\bigg) \left(\int_{[0,1]^{|A|\times (v-1)}} \prod_{i\in A} \left(\mathrm{Sym}[W_k](u_1,u^{(i)}_2,\ldots ,u^{(i)}_v)c_k^{(T)}(u_1)\prod_{a=2}^v h_{\nu}(u^{(i)}_a)\prod_{a=2}^v \,d u^{(i)}_a\right)\right)\\ & \left(\int_{[0,1]^{|A^c|\times (v-1)}} \prod_{i\in A^c} \left(\mathrm{Sym}[W_{\infty}](u_1,u^{(i)}_2,\ldots ,u^{(i)}_v)c_k^{(T)}(u_1)\prod_{a=2}^v h_{\nu}(u^{(i)}_a)\prod_{a=2}^v \,d u^{(i)}_a\right)\right) c_k^{(T)}(u_1) u_1^s\,d u_1.
			      	    \end{align*}
		      	        \end{small}			      	    
			      	    
		      	        In order to establish \eqref{eq:pivotlem6}, let us further define
		      	        $$\mathfrak{n}_{k}^{\nu,(T)}(u):=\int_{[0,1]^{v-1}} \mathrm{Sym}[W_k](u,u_2,\ldots ,u_v)c_k^{(T)}(u)\prod_{a=2}^v h_{\nu}(u_a)\prod_{a=2}^v \,d u_a,$$
		      	        $$\mathfrak{p}_{k}^{\nu,(T)}(u):=\int_{[0,1]^{v-1}} \mathrm{Sym}[W_{\infty}](u,u_2,\ldots ,u_v)c_k^{(T)}(u)\prod_{a=2}^v h_{\nu}(u_a)\prod_{a=2}^v \,d u_a,$$
		      	        and note that
		      	        \begin{align}\label{eq:pivotlem7}
		      	        \sup_{\nu\in\wmm^{(L)}}\sup_{k\geq 1} \max\left\{\lVert\mathfrak{n}_{k}^{\nu,(T)}\rVert_{\infty},\lVert\mathfrak{p}_{k}^{\nu,(T)}\rVert_{\infty}\right\}\leq L^{v-1} T.
		      	        \end{align}
	      	            Proceeding to show \eqref{eq:pivotlem6},  
                    integrating with respect to all the variables other than $u_1, u^{(i_0)}_2,\ldots ,u^{(i_0)}_v$, we get
	      	            \begin{align*}
	      	            \big|\mathfrak{F}_{k}^{\nu,A}\big|&=\bigg|\int_0^1 \Bigg(\int_{[0,1]^{(v-1)}}  \Bigg(\big(\mathrm{Sym}[W_k](u_1,u_2^{(1)},\ldots ,u_v^{(1)})-\mathrm{Sym}[W_{\infty}](u_1,u_2^{(1)},\ldots ,u_v^{(1)})\big)\\ & \quad c_k^{(T)}(u_1) \prod_{a=2}^v h_{\nu}(u_a^{(1)})\prod_{a=2}^v \,d u_a\Bigg) \Bigg) \left(\mathfrak{n}_{k}^{\nu,(T)}(u_1)\right)^{|A|}\left(\mathfrak{p}_{k}^{\nu,(T)}(u_1)\right)^{|A^c|} u_1^s\,d u_1\bigg|\\ &\leq \frac{1}{v!}\sum_{\sigma\in S_v}\Bigg|\int \left(\prod_{(a,b)\in E(H)} W_{k}(u_{\sigma(a)},u_{\sigma(b)})-\prod_{(a,b)\in E(H)} W_{\infty}(u_{\sigma(a)},u_{\sigma(b)})\right)\\ & \left(\prod_{a=2}^v h_{\nu}(u_a)\right) c_k^{(T)}(u_1)\left(\mathfrak{n}_{k}^{\nu,(T)}(u_1)\right)^{|A|}\left(\mathfrak{p}_{k}^{\nu,(T)}(u_1)\right)^{|A^c|} u_1^s\prod_{a=1}^v \,d u_a\Bigg|.
	      	            \end{align*}
      	             Observe that $|h_{\nu}|$'s are bounded by $L$ for $\nu\in \wmm^{(L)}$, $c_k^{(T)}$ is bounded by definition, and further $\mathfrak{n}_{k}^{\nu,(T)}$ and $\mathfrak{p}_{k}^{\nu,(T)}$ are both bounded by \eqref{eq:pivotlem7}. The conclusion in \eqref{eq:pivotlem6} then follows from~\cite[Proposition 3.1, part (ii)]{bhattacharya2024ldp}.
      	                
			      	    \vspace{0.1in}
			      	    
			      	    \emph{(iii)} Note that there exists a sequence of bounded continuous functions $W_{m}\in \mathcal{W}$ such that $\lVert W_{m}-W\rVert_{q}\to 0$ as $m\to\infty$. The triangle inequality implies that given any $m\geq 1$, $k\geq 1$, we have:
			      	    \begin{align}\label{eq:trng}
			      	        d_{\ell}(\Upsilon(W,\nu_k),\Upsilon(W,\nu_{\infty}))&\leq d_{\ell}(\Upsilon(W,\nu_k),\Upsilon(W_m,\nu_{k}))+d_{\ell}(\Upsilon(W_m,\nu_{k}),\Upsilon(W_m,\nu_{\infty}))\nonumber \\ &+d_{\ell}(\Upsilon(W,\nu_{\infty}),\Upsilon(W_m,\nu_{\infty}))
			      	    \end{align}
			      	    By part (ii), we have:
			      	    $$\lim_{m\to\infty}  \sup_{k\in [1,\infty]} d_{\ell}(\Upsilon(W_m,\nu_k),\Upsilon(W,\nu_k))=0.$$
			      	    Further from the definition of weak convergence we have, for every fixed $m$,
			      	    $$d_{\ell}(\Upsilon(W_m,\nu_k),\Upsilon(W_m,\nu_{\infty}))\to 0,\quad \mbox{as}\ k\to\infty.$$
			      	    Combining the two displays above with~\eqref{eq:trng} establishes part (iii).
						\end{proof}
						
\begin{proof}[Proof of \cref{lem:pfcmpct}]

Recall from \eqref{eq:funrel} that $\mathfrak{B}^*_{\theta}=\{ \nu_1\circ\mathfrak{G}^{-1}_1: \nu_1 \in \mathfrak{B}_{\theta}\}$, where $\mathfrak{G}_1(x,y)=(x,\alpha'(y))$ with $\alpha'(.)$ continuous  (see \cref{def:tilt} for definition of $\alpha(.)$). 
The facts that $\mathfrak{B}_{\theta}\subseteq \widetilde{M}_q$ and $\mathfrak{B}^*_{\theta}\subseteq \widetilde{M}_p$ follow directly from \eqref{eq:lip12}. It thus suffices to prove compactness of $\mathfrak{B}_{\theta}$ (which will imply compactness of $\mathfrak{B}^*_{\theta}$).

    To this effect, invoking \eqref{eq:lip12} there exists $C>0$ such that $\Xi(F_{\theta})\in \widetilde{\mathcal{M}}_{p,C}$ (see \eqref{eq:deftilde} for the definition of $\widetilde{\mathcal{M}}_{p,C}$). Also by \cref{cor:Upcont} the function $\Upsilon(W,\cdot)$ is continuous on $\Xi(F_{\theta})$ with respect to weak topology. Since $\Xi(F_{\theta})$ is compact in the weak topology (see~\cref{prop:freen} part (iii)), and  $\mathfrak{B}_{\theta}=\Upsilon(W,\Xi(F_{\theta}))$ (from \eqref{eq:highord2}), compactness of $\mathfrak{B}_{\theta}$ follows.
   
\end{proof}

					\begin{proof}[Proof of~\cref{lem:auxtail}]
     \emph{(i)}
					For any $L>0$ under $\mathfrak{R}_{n,\theta}^{(1)}$ we have
					\begin{equation}\label{eq:auxxxt1}
						    \frac{1}{n}\sum_{i=1}^n \E\left|d_i\left(X_i-X_i^{(L)}\right)\right|\leq \frac{D}{nL^{p-1}}\sum_{i=1}^n \E|X_i|^p,
							\end{equation}
							where $X_i^{(L)}:=X_i1\{|X_i|\le L\}$ and $\|{\bf d}\|_\infty\le D$. The RHS of \eqref{eq:auxxxt1} converges to $0$ as $n\to\infty$ followed by $L\to\infty$ by using~\eqref{eq:lip11}.
					Since $\alpha'(\theta m_i)=\E[X_i|X_j,\ j\in [n],\ j\neq i]$,
							 setting $$ \mJ_i^{(L)}:=\E\left[X_i^{(L)}|X_j,\ j\neq i\right],$$
							 we note that
        \begin{align*}
        \bigg|\alpha'(\theta m_i)-\mJ_i^{(L)}\bigg|\le \E\left[|X_i|1(|X_i|>L)|X_j,\ j\neq i\right]\le \frac{1}{L^{p-1}}\E\left[|X_i|^p|X_j,\ j\neq i\right].
        \end{align*}
        Consequently,
			\begin{align}\label{eq:auxxxt2}
				\frac{1}{n}\sum_{i=1}^n \E\bigg|d_i\left(\alpha'(\theta m_i)-\mJ_i^{(L)}\right)\bigg| \leq \frac{D}{nL^{p-1}}\sum_{i=1}^n \E|X_i|^p,
							\end{align}
						which converges to $0$ as $n\to\infty$ followed by $L\to\infty$, by using \eqref{eq:lip11} and the fact that $p>1$.
			Combining~\eqref{eq:auxxxt1}~and~\eqref{eq:auxxxt2}, it suffices to show $\sum_{i=1}^n d_i\left(X_i^{(L)}-\mJ_i^{(L)}\right)=o_P(1)$. Towards this direction, we further define, for $i\neq j$,
							$$\mJ^{(L)}_{i,j}:=\E\left[X_i^{(L)}|X_k,\ k\neq \{i,j\},\  X_j=0\right],$$
							and observe that
							\begin{align*}
								&\;\;\;\;\E\left[\frac{1}{n}\sum_{i=1}^n d_i \Big(X_i^{(L)}-\mJ_i^{(L)}\Big)\right]^2\nonumber \\ &=\frac{1}{n^2}\sum_{i=1}^n d_i^2\E\left(X_i^{(L)}-\mJ_i^{(L)}\right)^2+\frac{1}{n^2}\sum_{i\neq j} d_id_j\E\left[\left((X_i^{(L)}-\mJ_i^{(L)}\right)\left(X_j^{(L)}-\mJ_j^{(L)}\right)\right]\nonumber \\&\leq \frac{4D^2 L^2}{n}+\frac{1}{n^2}\sum_{i\neq j} d_id_j\E\left[\left(X^{(L)}_i-\mJ_{i,j}^{(L)}+\mJ_{i,j}^{(L)}-\mJ_i^{(L)}\right)\left(X_j^{(L)}-\mJ_j^{(L)}\right)\right].
							\end{align*}
							For $i\neq j$ the random variable $X_i^{(L)}-\mJ_{i,j}^{(L)}$ is measurable with respect to the sigma field generated by $\{X_k,\ k\in [n],\ k\neq j\}$, and consequently,
					\begin{equation*}
		\E\left[\left(X_i^{(L)}-\mJ_{i,j}^{(L)}\right)\left(X_j^{(L)}-\mJ_j^{(L)}\right)\right]=0,
							\end{equation*}
							for $i\neq j$. Combining the last two displays gives 
			\begin{equation}\label{eq:auxttt3}
				\E\left[\frac{1}{n}\sum_{i=1}^n d_i \Big(X_i^{(L)}-\mJ_i^{(L)}\Big)\right]^2\leq \frac{4D^2L^2}{n}+\frac{2LD^2}{n^2}\sum_{i\neq j}\E\left|\mJ_{i,j}^{(L)}-\mJ_{i}^{(L)}\right|.
							\end{equation}
							It suffices to show that the second term in the RHS of~\eqref{eq:auxttt3} converges to $0$ for every fixed $D,L$. 
							To control this second term,
						define \begin{equation}\label{eq:ref1}m_{i,j}:=\frac{v}{n^{v-1}}\sum_{\substack{(k_2,\ldots ,k_v)\\ \in \mathcal{S}(n,v,\{i,j\})}}\tQh(i,k_2,\ldots ,k_v)\left(\prod_{m=2}^v X_{k_m}\right)\end{equation}
							for $i\neq j$, where $\mathcal{S}(n,v,\{i,j\})$ denotes the set of all distinct tuples in
$[n]^{v-1}$, such that none of the elements equal to $i$ or $j$. For any $K>0$, by the triangle inequality we have the following for any $i\neq j$,
							\begin{align}\label{eq:trncon}
							  &\;\;\;\;\;\frac{1}{n^2}\sum_{i\neq j}\E\left|\mJ_{i,j}^{(L)}-\mJ_i^{(L)}\right|\nonumber \\
							 &\leq \frac{1}{n^2}\sum_{i\neq j}\E\left[\left(\left|\mJ_{i,j}^{(L)}\right|+\left|\mJ_{i}^{(L)}\right|\right)\left(\mathbbm{1}(|m_{i,j}|\geq K)+\mathbbm{1}(|m_i|\geq K)\right)\right]\nonumber \\&\;\;\;\;\;\;\;\;\;\;+\frac{1}{n^2} \sum_{i=1}^n \E\left[\left|\mJ_{i,j}^{(L)}-\mJ_{i}^{(L)}\right|\mathbbm{1}(|m_{i,j}|\leq K,\ |m_i|\leq K)\right]\nonumber \\ &\leq \frac{2L}{n^2 K}\sum_{i\neq j} \E\left(|m_{i,j}|+|m_i|\right)+\frac{1}{n^2} \sum_{i\neq j}^n \E\left[\left|\mJ_{i,j}^{(L)}-\mJ_{i}^{(L)}\right|\mathbbm{1}(|m_{i,j}|\leq K,\ |m_i|\leq K)\right].
							\end{align}
							It suffices to show that the RHS of~\eqref{eq:trncon} converges to $0$ as $n\to\infty$, followed by $K\to\infty$. 
							Now let us complete this proof based on the following claim, whose proof we defer:
							\begin{equation}\label{eq:claimtrun}
							\sum_{i\neq j}  \E|m_i-m_{i,j}|=O(n).
							\end{equation}
							By combining~\eqref{eq:claimtrun} with~\eqref{eq:lip13}, we also have:
							\begin{equation}\label{eq:claimcon}
							\sum_{i\neq j} \E|m_{i,j}| = O(n^2).
							\end{equation}
						    	
							By combining~\eqref{eq:claimcon} with~\eqref{eq:lip13}, it is immediate that the first term in the RHS of~\eqref{eq:trncon} converges to $0$ as $n\to\infty$, followed by $K\to\infty$. For the second term in the RHS of~\eqref{eq:trncon}, let us define the function:
							$$ \mathfrak{E}_L(t):=\frac{\int_{|x|\leq L} x\exp(tx)\,d\mu(x)}{\int_{-\infty}^{\infty} \exp(tx)\,d\mu(x)}=\E_{X\sim \mu_t}[X\mathbbm{1}(|X|\leq L)],$$
							where $\mu_t$ is the exponential tilt of $\mu$ as introduced in~\cref{def:tilt}. From standard properties of exponential families, $\mathfrak{E}_L(\cdot)$ has a continuous derivative on $\R$ and therefore, 
							$$\sup_{|t|\leq |\theta| K} |\mathfrak{E}_L'(t)|\leq \mathfrak{c},$$
							where $\mathfrak{c}>0$ depends on $|\theta|$, $L$, and $K$.  Hence, 
							\begin{align}\label{eq:ref2}
							    &\;\;\;\;\;\frac{1}{n^2}\sum_{i\neq j}\E\left[\left|\mJ_{i,j}^{(L)}-\mJ_{i}^{(L)}\right|\mathbbm{1}(|m_{i,j}|\leq K,\ |m_i|\leq K)\right]\nonumber\\ &=\frac{1}{n^2}\sum_{i\neq j}\E\left[\left|\mathfrak{E}(\theta m_{i,j})-\mathfrak{E}(\theta m_i)\right|\mathbbm{1}(|m_{i,j}|\leq K,\ |m_i|\leq K)\right]\nonumber \\ &\leq \frac{\mathfrak{c}|\theta|}{n^2}\sum_{i\neq j}\E|m_{i,j}-m_i|=O\left(\frac{1}{n}\right),
							\end{align}
							for every fixed $\theta$, $L$, and $K$. This completes the proof that~\eqref{eq:trncon} converges to $0$ as $n\to\infty$ followed by $K\to\infty$.
							\vspace{0.1in}
							
							\emph{Proof of~\eqref{eq:claimtrun}.} The symmetry of $\mathrm{Sym}[Q_n]$ implies
		\begin{align*}
								| m_i - m_{i,j}| 
								\leq \frac{v}{n^{v-1}} |X_j|\sum_{\substack{(k_3,\ldots ,k_v)\\ \in \mathcal{S}(n,v-1,\{i,j\})}} \mathrm{Sym}[|Q_n|](i,j,k_3,\ldots ,k_v)\left(\prod_{m=3}^v |X_{k_m}|\right).
							\end{align*}
				 	Using this, we bound the left hand side of \eqref{eq:trncon} below. 
				\begin{align*}
								\frac{1}{n}\sum_{i\neq j} \left| m_i- m_{i,j}\right|&\leq \frac{v}{n^{v}}\sum_{i\neq j}\sum_{\substack{(k_3,\ldots ,k_v)\\ \in \mathcal{S}(n,v,\{i,j\})}} \mathrm{Sym}[|Q_n|](i,j,k_3,\ldots ,k_v)|X_j|\left(\prod_{m=3}^v |X_{k_m}|\right)\nonumber \\&\leq v\left(\frac{1}{n^v}\sum_{(k_1,\ldots ,k_v)\in [n]^v} |\mathrm{Sym}[|Q_n|](k_1,\ldots ,k_v)|^q\right)^{\frac{1}{q}}\left(\frac{1}{n}\sum_{i=1}^n  |X_i|^p\right)^{\frac{v-1}{p}}\\ &\le v\lVert W_{Q_n}\rVert_{q\Delta}\left(\frac{1}{n}\sum_{i=1}^n  |X_i|^p\right)^{\frac{v-1}{p}},
							\end{align*}
						where the second inequality follows from H\"older's inequality, and the third inequality uses \cref{lem:Tgraphon0} part (c). The above display, on taking expectation, gives 
       \begin{align*}
       \frac{1}{n}\sum_{i\neq j}\E|m_i-m_{i,j}|\le v\lVert W_{Q_n}\rVert_{q\Delta}\left(\frac{1}{n}\sum_{i=1}^n  \E|X_i|^p\right)^{\frac{v-1}{p}}.
       \end{align*}
       Here, we have used Lyapunov's inequality coupled with the observation that $v-1\le p$. 
       As $\limsup_{n\to\infty} \lVert W_{Q_n}\rVert_{q\Delta}<\infty$ by~\eqref{eq:q}, an application of~\eqref{eq:lip11} in the last display above completes the proof of~\eqref{eq:claimtrun}.
						\end{proof}

\begin{proof}[Proof of~\cref{lem:auxtail}]
\emph{(ii)} Choose $q'<q$ and $\tp<p$ such that $\tp^{-1}+q'^{-1}=1$. Fixing $L>0$ we have
$$\frac{1}{n}\bigg|\sum_{i=1}^n m_i(X_i-X_i^{(L)})\bigg|\le \frac{1}{L^{\frac{p}{\tp}-1}}\left(\frac{1}{n}\sum_{i=1}^n |m_i|^{q'}\right)^{\frac{1}{q'}}\left(\frac{1}{n}\sum_{i=1}^n |X_i|^p\right)^{\frac{1}{\tp}}=o_p(1),$$
where the limit is to be understood as $n\to\infty$ followed by $L\to\infty$. Here we used~\eqref{eq:lip11} and~\eqref{eq:lip13}. Now, from standard analysis we have the existence of a $C_1$ function $\psi_L:\R\to\R$ such that $\psi_L(x)=x$ for $|x|\le L$, $|\psi_L(x)|\le |x|$, $\lVert \psi_L\rVert_{\infty}<\infty$, and $\lVert \psi_L'\rVert_{\infty}<\infty$. This gives
$$|m_i-\psi_L(m_i)|^{q'}\le 2^{q'}|m_i|^{q'} { 1}\{|m_i|>L\}\le  \frac{2^{q'}}{L^{q-q'}}|m_i|^q.$$
Using this bound along with H\"{o}lder's inequality, we get:
\begin{align*}
\frac{1}{n}\bigg|\sum_{i=1}^n (m_i-\psi_L(m_i))X_i^{(L)}\bigg| &\le \left(\frac{1}{n}\sum_{i=1}^n |m_i-\psi_L(m_i)|^{q'}\right)^{\frac{1}{q'}}\left(\frac{1}{n}\sum_{i=1}^n |X_i|^{\tp}\right)^{\frac{1}{\tp}}\\ &\le \frac{2}{L^{\frac{q}{q'}-1}}\left(\frac{1}{n}\sum_{i=1}^n |m_i|^q\right)^{\frac{1}{q'}}\left(\frac{1}{n}\sum_{j=1}^n |X_j|^{\tp}\right)^{\frac{1}{\tp}}=o_P(1)
\end{align*}
as $n\to\infty$ followed by $L\to\infty$, on using~\eqref{eq:lip11} and \eqref{eq:lip13}. Combining the above displays we get
\begin{align*}
\frac{1}{n}\bigg|\sum_{i=1}^n m_iX_i-\sum_{i=1}^n \psi_L(m_i)X_i^{(L)}\bigg|=o_p(1),
\end{align*}
as $n\to\infty$ followed by $L\to\infty$. A similar computation shows
\begin{align*}
\frac{1}{n}\bigg|\sum_{i=1}^n m_i\E[X_i|X_j,\ j\neq i]-\sum_{i=1}^n \psi_L(m_i)\E[X_i^{(L)}|X_j,\ j\neq i]\bigg|=o_p(1)
\end{align*}
in the same sense. Using the last two displays above, 
it suffices to show that 
\begin{align}\label{eq:reff3}
\frac{1}{n}\bigg|\sum_{i=1}^n\psi_L(m_i)\left(X_i^{(L)}-\E[X_i^{(L)}|X_j,\ j\neq i]\right)\bigg|=o_p(1),
\end{align}
as $n\to\infty$ for fixed $L$. Towards this direction, we will use the definitions of $m_{i,j}$, $\mathcal{J}_i^{(L)}$, $\mathcal{J}_{i,j}^{(L)}$  from the proof of~\cref{lem:auxtail} part (a). 
Observe that
\begin{align*}
&\;\;\;\;\;\frac{1}{n^2}\E\bigg|\sum_{i=1}^n \psi_L(m_i)\left(X_i^{(L)}-\mathcal{J}_i^{(L)}\right)\bigg|^2\nonumber \\ &\leq \frac{L^2\lVert \psi_L\rVert_{\infty}^2}{n}+\frac{1}{n^2}\sum_{i\neq k}\E\left[\psi_L(m_i)\psi_L(m_k)(X_i^{(L)}-\mathcal{J}_i^{(L)})(X_k^{(L)}-\mathcal{J}_k^{(L)})\right].
\end{align*}
By Markov's inequality, in order to establish~\eqref{eq:reff3}, it suffices to show that the above display converges to $0$ as $n\to\infty$ for any fixed $L$. As
$$\E\left[\psi_L(m_{i,k})\psi_L(m_k)(X_i^{(L)}-\mathcal{J}_{i,k}^{(L)})(X_k^{(L)}-\mathcal{J}_k^{(L)})\right]=0$$
for $i\neq k$, it suffices to show that
$$ \frac{1}{n^2}\sum_{i\neq k}\E|\mathcal{J}_{i}^{(L)}-\mathcal{J}_{i,k}^{(L)}|=o(1), \qquad \frac{1}{n^2}\sum_{i\neq k}\E|\psi_L(m_i)-\psi_L(m_{i,k})|=o(1).$$
The left hand display is what we bounded in~\eqref{eq:trncon}. As $|\psi_L(m_i)-\psi_L(m_{i,k})|\le \lVert \psi_L'\rVert_{\infty}|m_i-m_{i,k}|$, the right hand display above follows directly from~\eqref{eq:claimtrun}. 
\end{proof}

	\begin{appendix}
	\section*{}\label{sec:appen}

In this Section, we will prove the auxiliary lemmas from earlier in the paper. \cref{sec:appenaux0} collects all results on the properties of the base measure $\mu$, and \cref{sec:appenaux} contains some general probabilistic convergence results.

\subsection{Proofs of Lemmas \ref{lem:fixsol}, \ref{lem:KLcont},  and 
 \ref{lem:fixsol0}}\label{sec:appenaux0}
       \begin{proof}[Proof of~\cref{lem:fixsol}] 
						
		With $\mv_{\theta,B,\mu}(x)=\theta x^2+Bx-x\beta(x)+\alpha(\beta(x))$ as in the statement of the lemma,	differentiation  gives
							$\mv'_{\theta,B,\mu}(x)=2\theta x+B-\beta(x)$. Using~\cref{lem:fixsol0} part (ii) we get $\lim_{x\to\pm \infty}\mv'_{\theta,B,\mu}(x)=\pm \infty$ (since $p\ge 2)$, and so the continuous function $\mv_{\theta,B,\mu}(.)$ attains its global maximizers on $\R$, and any  maximizer (local or global)
							satisfies $\mv'_{\theta,B,\mu}(x)=2\theta x+B-\beta(x)=0$, which is equivalent to solving $\tph_{\theta,B,\mu}(x)=0$, where
							\begin{equation}\label{eq:fixsol1}
		\tph_{\theta,B,\mu}(x):=x-\alpha'(2\theta x+B),\quad  \tph'_{\theta,B,\mu}(x):=1-2\theta \alpha''(2\theta x+B).
					\end{equation}
							
							\emph{(i)} Here $B=0$, and symmetry of $\mu$ gives $\alpha'(0)=\tph_{\theta,0,\mu}(0)=0$. To show that $0$ is the only root of $\tph_{\theta,0,\mu}(\cdot)$ (and hence the unique maximizer of $\mv_{\theta,0,\mu}$), using symmetry of $\mu$ it suffices to show that $\tph_{\theta,0,\mu}$ does not have any other roots on $(0,\infty).$
							To this effect, using~\eqref{eq:secasn} it follows that $\alpha''(.)$ is non-increasing on $(0,\infty)$, and so $\tph_{\theta,0,\mu}$ is convex using \eqref{eq:fixsol1}. Since $\tph'_{\theta,0,\mu}(0)=0$, it follows that $0$ is also a global minimizer of $\tph_{\theta,0,\mu}(\cdot)$, and so $\tph_{\theta,0,\mu}$ is non-positive. If there exists a positive root $x_0$ of $\tph_{\theta,0,\mu}(\cdot)$, then by convexity (and symmetry) we have $\tph_{\theta,0,\mu}\equiv 0$ on $[-x_0,x_0]$. But this implies $\alpha'(\cdot)$ is linear on this domain, and so $\alpha(\cdot)$ must be a quadratic, which is only possible only if $\mu$ is a Gaussian. This contradicts \eqref{eq:tailp}, and hence completes the proof of part (i).
					
     \vspace{0.1in}
					
								\emph{(ii)} By symmetry, it suffices to consider the case $B>0$. Comparing $x$ with $-x$ and using the symmetry of $\mu$, it follows that all global maximizers lie in $[0,\infty)$. Also in this case $\alpha'(B)>0$, which implies $\tph_{\theta,B,\mu}(0)<0$. As $\lim\limits_{x\to\infty} \tph_{\theta,B,\mu}(x)=\infty$ by~\cref{lem:fixsol0} part (i),
	$\tph_{\theta,B,\mu}(\cdot)$ either has a unique positive root, or at least $3$ positive roots. If the latter holds, using~\eqref{eq:fixsol1} $\alpha''(\cdot)$ must have two positive roots $(x_1,x_2)$, which on using~\eqref{eq:secasn} gives that $\alpha'''(\cdot)\equiv 0$ on the interval $[x_1,x_2]$. As in part (i), this implies that $\mu$ is Gaussian, a contradiction to \eqref{eq:tailp}.
		Thus $\mv(\cdot)$ has a unique positive maximizer $t_{\theta,B,\mu}$. 
  
  \vspace{0.1in}
							
	\emph{(iii)} In this case $\tph_{\theta,B,\mu}(0)=0$ and $\tph_{\theta,B,\mu}'(0)<0$. Therefore, $\tph_{\theta,B,\mu}(\cdot)$ either has a unique positive root or at least $3$ positive roots. From there we argue, similar to part (ii) above, that $\tph_{\theta,0,\mu}(\cdot)$ has exactly one positive root $t_{\theta,0,\mu}$.  By symmetry, it follows that $-t_{\theta,0,\mu}$ is the unique negative root of $\tph_{\theta,0,\mu}(\cdot)$, and $\pm t_{\theta,0,\mu}(\cdot)$ are the global maximizers of $\mv_{\theta,0,\mu}$. 
       \end{proof}

        \begin{proof}[Proof of~\cref{lem:KLcont}]
					The function $\beta(.)$ is smooth ($C^\infty$) on $\mathcal{N}$, and the function $\gamma(.)$ is smooth on $\R$. Consequently, the function $\gamma(\beta(.))$ is smooth on $\mathcal{N}$. To verify continuity on ${\rm cl}(\mathcal{N})$, 
						it suffices to cover the (possible) boundary cases:
					\begin{itemize}	
					\item
				If $a:=\sup\{\mathcal{N}\}<\infty$, then $\lim_{u\to a}\gamma(\beta(u))=\gamma(\beta(a))=\gamma(\infty)$.
				
				\item
				If $b:=\inf\{\mathcal{N}\}>-\infty$, then
			$\lim_{u\to b} \gamma(\beta(u))=\gamma(\beta(b))=\gamma(-\infty)$.
					\end{itemize}
						
							We will only prove the first case, as the other case follows similarly. Note that,
							$$\lim_{u\to a}\beta(u)=\infty\Rightarrow \lim_{u\to a}\mu_{\beta(u)}=\delta_a,$$
							where the second limit is in weak topology. Further, 
						 \begin{equation}\label{eq:lowersem}
			\liminf_{u\to a}\gamma(\beta(u))=\liminf_{u\to\sup\{\mathcal{N}\}}D(\mu_{\beta(u)}|\mu)\ge D(\delta_a|\mu)= \gamma(\infty)
							\end{equation} 
        by the lower semi-continuity of Kullback-Leibler divergence. 
							If $\mu(\{a\})=0$,  then $\gamma(\infty)=\infty$, and~\eqref{eq:lowersem} yields the desired conclusion.
							If $\mu(\{a\})>0$, then $\gamma(\infty)=-\log{\mu(\{a\})}$. Also, for any $\theta\in\R$, we have
						$$\alpha(\theta)=\log{\int \exp(\theta x)\,d\mu(x)}\geq \theta a+\log{\mu(\{a\})}.$$
							For all $u$ such that $\beta(u)>0$ (which holds for all $u$ close to $a$), this gives $$\gamma(\beta(u))=u\beta(u)-\alpha(\beta(u))\leq u\beta(u)-a\beta(u)-\log{\mu(\{a\})}\leq \log{\mu(\{a\})}=\gamma(\infty).$$
							Combining the above display with~\eqref{eq:lowersem} gives $\lim_{u\to a}\gamma(\beta(u))=\gamma(\infty),$ as desired.
							\end{proof}

\noindent \begin{proof}[Proof of~\cref{lem:fixsol0}] 
						\emph{(i)}
						We prove
	$\lim_{\theta\to\infty}\frac{\alpha'(\theta)}{\theta^{\frac{1}{p-1}}}=0$, noting that the proof of the other limit is similar. To this effect, we consider the following two cases separately:
						\begin{itemize}
						    \item $\mu(0,\infty)>0$.

						Fixing $\theta>0$ and $\delta>0$, we have:
	\begin{align*}\frac{|\alpha'(\theta)|}{\theta^{\frac{1}{p-1}}}&\le \frac{\int_\R |y|\exp(\theta  y)\,d\mu(y)}{\theta^{\frac{1}{p-1}}\int_\R \exp(\theta y)\,d\mu(y)}\\ &\leq \frac{\delta \theta^{\frac{1}{p-1}} \int_{|y|\leq \delta \theta^{\frac{1}{p-1}}} \exp(\theta y)\,d\mu(y)+\int_{|y|\geq \delta \theta^{\frac{1}{p-1}}}|y|\exp(\theta y)\,d\mu(y)}{\theta^{\frac{1}{p-1}}\int \exp(\theta y)\,d\mu(y)}\\
								&\leq \delta + \frac{\int_\R |y| \exp(|y|^p \delta^{1-p}) d\mu(y)}{\theta^{\frac{1}{p-1}} \int_\R \exp(\theta y)d\mu(y)},
				\end{align*}
    where we use the bound $|\theta y|\le |y|^p \delta^{1-p}$ on the set $|y|\ge \delta |\theta|^{\frac{1}{p-1}}$. Letting $\theta\to \infty$ we have $\int_\R\exp(\theta y)d\mu(y)\to \infty$, as $\mu(0,\infty)>0$. Since the numerator in the second term  in the display above is finite invoking \eqref{eq:tailp}, the second term above converges to $0$ as $\theta\to\infty$, allowing us to conclude
		$\limsup_{\theta\to\infty}\frac{|\alpha'(\theta)|}{\theta^{\frac{1}{p-1}}}\le \delta$. Since $\delta>0$ is arbitrary, the desired limit follows.
						\\
      
							\item $\mu(0,\infty)=0$.
							\\
								In this case, $\alpha'(\theta)\le 0$. Since $\alpha'(\cdot)$ is non-decreasing, $\lim_{\theta\to\infty}\alpha'(\theta)$ exists as a finite (non-positive) number. Consequently we have
					$\lim_{\theta\to \infty}\frac{\alpha'(\theta)}{\theta^{\frac{1}{p-1}}}= 0$. 
							\end{itemize}
							\emph{(ii)}
							We only study the case when $x\to \sup\{\mathcal{N}\}$. If $\sup\{\mathcal{N}\}<\infty$, then the conclusion is immediate as the denominator converges to a finite number while the numerator diverges. Therefore, we only focus on the case $\sup\{\mathcal{N}\}=\infty$. To this effect, fixing $M>0$ using part (i) gives that for all $x$ large enough (depending on $M$) we have
			$$\alpha'(M x)\le x^{\frac{1}{p-1}}\Leftrightarrow Mx	\le \beta\left(x^{\frac{1}{p-1}}\right).$$
							Taking limit gives
		$$\liminf_{x\to\infty}\frac{\beta\left(x^{\frac{1}{p-1}}\right)}{x}\ge M.$$
							Since $M$ is arbitrary, we conclude the desired conclusion follows.
							\end{proof}
     \subsection{Proofs of \cref{lem:Tgraphon0} and Lemmas \ref{lem:proj_cont_map} and  
     \ref{lem:specond}}\label{sec:appenaux}

     \begin{proof}[Proof of \cref{lem:Tgraphon0}]
    \emph{(i) and (ii)} These are direct consequences of \cite[Proposition 2.19]{borgsdense1} and \cite[Lemma 2.2]{bhattacharya2024ldp}.

    \emph{(iii)} 
    With $\mathrm{Sym}[|W|]$ as in~\cref{def:symmfndef}, we have
    \begin{align*}
   \E\left[\mathrm{Sym}[|W|](U_1,\ldots ,U_v)\right]^q &=\E \bigg|\frac{1}{v!}\sum_{\sigma\in \mathcal{S}_v} \prod_{(a,b)\in E(H)} |W|(U_{\sigma(a)},U_{\sigma(b)})\bigg|^q\nonumber \\ &\le \frac{1}{v!}\sum_{\sigma\in\mathcal{S}_v}\E \prod_{(a,b)\in E(H)}|W|^q(U_{\sigma(a)},U_{\sigma(b)})\le \lVert W\rVert_{q\Delta}^{q|E(H)|},
   \end{align*}
  where the first inequality uses Lyapunov's inequality, and the second inequality follows from \cref{lem:Tgraphon0} part (ii), with $W$ replaced by $|W|^q$. 
 \end{proof}
 
\begin{proof}[Proof of~\cref{lem:proj_cont_map}]
          By using \eqref{eq:tight}, it follows that the sequence $\{\xi_n\}_{n\ge 1}$ is tight. Passing to a subsequence, w.l.o.g.~we can assume $\xi_n\stackrel{d}{\to}\xi_\infty$, where $\P(\xi_\infty\in \mf)=1$ (as $\mf$ is closed). By the  Portmanteau Theorem,
          \begin{align}\label{eq:proj1}
          \P(\xi_{\infty}\in K^c)\le \limsup\limits_{n\to\infty}\P(\xi_n\in K^c)=0.
          \end{align}
          Next we will show that $g(\xi_n)\stackrel{d}{\to} g(\xi_{\infty})$. Towards this direction let $H\subseteq g(\mf)$ be a closed set. We will write $g^{-1}(H)$ to denote the inverse image of the set $H$ under $g$. Another application of the Portmanteau Theorem implies:
          \begin{align}\label{eq:proj2}
              \notag\limsup\limits_{n\to\infty}\P(g(\xi_n)\in H,\ \xi_n\in K)=&\limsup\limits_{n\to\infty} \P(\xi_n\in g^{-1}(H)\cap K)\\
              \ge& \P(\xi_{\infty}\in g^{-1}(H)\cap K).
          \end{align}
          The last line uses the fact that $g^{-1}(H)\cap K$ is closed which in turn follows from the continuity of $g$ on $K$. Finally, by \eqref{eq:tight} and \eqref{eq:proj1}, we have:
          \begin{align}\label{eq:proj3}
        \limsup\limits_{n\to\infty}|\P(g(\xi_n)\in H,\ \xi_n\in K)-\P(g(\xi_n)\in H)|=0,  
          \end{align}
          \begin{align}\label{eq:proj4}
          \P(\xi_{\infty}\in g^{-1}(H)\cap K)=\P(\xi_{\infty}\in g^{-1}(H)).
          \end{align}
          By combining \eqref{eq:proj2}, \eqref{eq:proj3}, and \eqref{eq:proj4}, it follows that 
          $$\limsup\limits_{n\to\infty} P(g(\xi_n)\in H)\ge \P(g(\xi_{\infty})\in H).$$
          By the Portmanteau theorem, this yields $g(\xi_n)\stackrel{d}{\to} g(\xi_{\infty})$. 
          So for any $\vep>0$ we get
          \begin{align*}
          \limsup_{n\to\infty}\P(d_Y(g(\xi_n),g(\mf))\ge \varepsilon)\le \P(d_Y(g(\xi_\infty),g(\mf))\ge \varepsilon)=0
          \end{align*}
          as $g(\xi_\infty)\in g(\mf)$ a.s.~Since $\varepsilon>0$ is arbitrary, $d_Y(g(\xi_\infty),g(\mf))\stackrel{P}{\to}0$, as desired.
          \end{proof}

          \begin{proof}[Proof of~\cref{lem:specond}]
 
\emph{(i)} Since $\limsup_{n\to\infty}\E |f_n(U)|^p<\infty$,  it follows that the sequence $\{|f_n(U)|^{p'}\}_{n\geq 1}$ is uniformly integrable, and so $\E|f_{\infty}(U)|^{p'}<\infty$. By standard approximation results, given any $\varepsilon>0$, there exists $h:[0,1]\to \R$ (depending on $\varepsilon$) such that $h$ is continuous on $[0,1]$ and $\E|h(U)-f_{\infty}(U)|^{p'}<\varepsilon$. Continuous mapping theorem gives $f_n(U)-h(U)\overset{D}{\longrightarrow} f_{\infty}(U)-h(U)$. Since $
|f_n(U)-f_\infty(U)|^{p'}$ is uniformly integrable, 
      $\lVert f_n-f_{\infty}\rVert_{p'}\longrightarrow \lVert f_{\infty}-h\rVert_{p'}\le \varepsilon$. 
      As $\varepsilon>0$ is arbitrary, this completes the proof of part (a).
      
      \emph{(ii)}
      The conclusion follows by applying part (a) on the sequence of measures alternating between $(U,f(U))$ and $(U,g(U))$ along odd and even subsequences.
      \end{proof}

\end{appendix}

\section*{Acknowledgments}
The authors would like to thank the anonymous referees, an Associate Editor, and the Editor for their constructive comments.

\subsection*{Funding}
The third author's research is partially supported by NSF grant DMS-2113414.

\small
\bibliographystyle{plainnat}
\bibliography{template, Newtemplate}

\end{document}